\numberwithin{equation}{section}
\newtheorem{theorem}{Theorem}[section]
\theoremstyle{plain}
\newtheorem{lemma}[theorem]{Lemma}
\theoremstyle{plain}
\newtheorem{proposition}[theorem]{Proposition}
\theoremstyle{plain}
\newtheorem{corollary}[theorem]{Corollary}
\theoremstyle{definition}
\newtheorem{remark}[theorem]{Remark}
\newcommand{\N}{{\mathbb N}}
\newcommand{\R}{{\mathbb R}}
\newcommand{\eps}{\varepsilon}
\newcommand{\e}{\epsilon}
\newcommand{\beq}{\begin{equation}}
\newcommand{\eeq}{\end{equation}}
\renewcommand{\le}{\leqslant}
\renewcommand{\ge}{\geqslant}
\newcommand{\cal}{\mathcal}
\def\Q{\mathcal{Q}}
\def\K{\mathcal{K}}
 \def\B{\mathcal{B}}
\def\T{\mathcal{T}}
\def\S{\mathcal{S}}
\title[Parabolic Harnack estimates for anisotropic slow diffusion]{Parabolic Harnack Estimates for anisotropic slow diffusion}
\author[S. Ciani]{Simone Ciani}
\address[S. Ciani]{Dipartimento di Matematica e Informatica ``Ulisse Dini'', Universit\`a degli Studi di Firenze, Viale Morgagni 67/A
50134, Firenze, Italy}
\email{simone.ciani@unifi.it}
\author[S. Mosconi]{Sunra Mosconi}
\address[S. Mosconi]{Dipartimento di Matematica e Informatica, Universit\`a degli Studi di Catania, Viale A. Doria 6, 95125 Catania, Italy}
\email{sunra.mosconi@unict.it}
\author[V. Vespri]{Vincenzo Vespri }
\address[V. Vespri]{Dipartimento di Matematica e Informatica ``Ulisse Dini'', Universit\`a degli Studi di Firenze, Viale Morgagni 67/A
50134, Firenze, Italy}
\email{vincenzo.vespri@unifi.it}
\begin{document}
%%%%%%%%%%%%%%%%%%%%%%%%%%%%%%%%%%%%%%%%%%%%%%%%%%%
%%%%%%%%%%%%%%%%%%%%%%%%%%%%%%%%%%%%%%%%%%%%
\begin{abstract} 
We prove a Harnack inequality for positive solutions of a parabolic equation with slow anisotropic spatial diffusion. After identifying its natural scalings, we reduce the problem to a Fokker-Planck equation and construct a self-similar Barenblatt solution. We exploit translation invariance to obtain positivity near the origin via a self-iteration method and deduce a sharp anisotropic expansion of positivity. This eventually yields a scale invariant Harnack inequality in an anisotropic geometry dictated by the speed of the diffusion coefficients.   As a corollary, we infer H\"older continuity, an elliptic Harnack inequality and a Liouville theorem. 

\noindent
{\bf{MSC 2020}}:  35K59, 35K65, 60J60, 74N25

%%%%%%%%%%%%%%%%%%%%%%%%%%%%%%%%%%%%%%%%%%%%%%%%%%%
\noindent
{\bf{Key Words}}: Anisotropic diffusion, Fundamental solution, Harnack inequality, Intrinsic geometry, Fokker-Planck equation

%%%%%%%%%%%%%%%%%%%%%%%%%%%%%%%%%%%%%%%%%%%%%%%%%%%

\end{abstract}

\maketitle

	\begin{center}
		\begin{minipage}{9cm}
			\small
			\tableofcontents
		\end{minipage}
	\end{center}

%%%%%%%%%%%%%%%%%%%%%%%%%%%%%%%%%%%%%%%%%%%%%%%%%%%%%%%%%%%%%%%%%%%%%%%
\section{Introduction}
We  are concerned  with solutions of the model parabolic anisotropic equation 
\begin{equation}
\label{mod}
\partial_{t}u=\sum_{i=1}^{N}\partial_{i}\big(|\partial_{i}u|^{p_{i}-2}\partial_{i}u\big)
\end{equation}
satisfied in a suitably weak sense in $\Omega\times (0, T)$, $\Omega\subseteq \R^{N}$ for powers $p_{i}>1$ for $i=1, \dots, N$. These kind of equations raised increasing interest in the last decades as they present an interesting feature, namely an {\em anisotropic diffusion} with orthotropic structure. Besides its inherent mathematical interest, the latter is useful when modelling diffusion in materials such as earth's crust or wood, where the velocity of propagation of diffusion varies according to the different orthogonal  directions. From the mathematical point of view, the principal part in \eqref{mod} arises as the Euler-Lagrange equation of a functional with {\em non-standard growth}, i.e. of the type
\[
\int F(\nabla u)\, dx,\quad \text{where}\quad  \frac{1}{C}\, (|z|^{p}-1)\le F(z)\le C\, (|z|^{q}+1)
\]
for some $p<q$, as opposed to the standard growth condition $p=q$. Starting from the pioneering examples in \cite{Marcellini-counter, Giaq}, it soon became apparent that the regularity theory for solutions of the corresponding Euler-Lagrange elliptic equation is much more delicate and rich than the standard one. Since then, the elliptic regularity theory  grew in considerable size. Even if this has not always been the case, the general principle underlying  to the theory is that most regularity results can be recovered when the power gap $q-p$ in the non-standard growth condition is small. Since  it would be impossible to collect here all the contributions, we refer to the surveys \cite[Section 6]{Mingione} and \cite{Marcellini} for a general overview of the subject and comprehensive bibliographic references. 

As the non-standard elliptic theory matured,  its parabolic counterpart became a research th\^eme as well. The delay in development was considerable, mainly because already the isotropic problem with $p_{i}\equiv p\ne 2$ presented great difficulties, solved (with respect to zero-th order regularity issues) in full generality only a decade ago  through the work of Di Benedetto and collaborators, see  \cite{DBGV-mono} and the literature therein. Nevertheless,  parabolic equations with non-standard growth were considered well before that (see e.g. \cite{Lions}), giving rise to a large amount of results on existence, well-posedness, $L^{\infty}$-estimates and diffusion analysis. For an extensive bibliography on this research, we refer to  \cite{AS} and for the theory of variational solutions to \cite[Section 12]{Marcellini} and the references therein.   

Despite some partial results, however, most of the regularity theory for the parabolic anisotropic equations is largely unknown. Up to our knowledge, local H\"older continuity of solution of \eqref{mod} was not known, as well as the validity of a suitable (necessarily intrinsic) parabolic Harnack inequality. The latter is precisely the aim of this paper, where we are going to prove the following result.

\begin{theorem} \label{Harnack-Inequality}
Let $u\ge 0$ be a  local weak solution to \eqref{mod} in $\Omega\times [-T,  T]$ and suppose  that
\begin{equation} \label{param}
    \forall i=1, \dots, N\quad   2<p_i < \bar{p}\, \bigg(1+\frac{1}{N} \bigg) \qquad \quad \bar{p} := \bigg(\frac{1}{N} \sum_{i=1}^N \frac{1}{p_i} \bigg)^{-1}<N
\end{equation}
and  $u(0,0)>0$. Then, there exist constants $C_{1}\ge 1, C_3\ge C_2\ge 1$ depending only on $N$ and the $p_{i}$'s such that, letting $M=u(0, 0)/C_1$ it holds 
\begin{equation}\label{Harnack}
  \frac{1}{C_{3}}\sup_{\K_{\rho}(M)}u(\,\cdot\, , - M^{2-\bar p}\, (C_{2}\, \rho)^{\bar p} )\le  u(0, 0) \le C_{3} \inf_{\K_{\rho}(M)} u(\,\cdot\, ,   M^{2-\bar{p}}\, (C_{2}\, \rho)^{\bar{p}})
    \end{equation}
    whenever $ M^{2-\bar p}\, (C_{3}\,  \rho)^{\bar p}<T$ and $\K_{C_{3}\, \rho}(M)\subseteq \Omega$, being
    \begin{equation}
    \label{intdef}
    \K_{r}(M):= \prod_{i=1}^N  \big{\{} |x_i| < M^{(p_i-\bar{p})/p_i} r^{\bar{p}/p_i}/2  \big{\}}.
    \end{equation}
\end{theorem}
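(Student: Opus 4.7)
Equation \eqref{mod} is invariant under the anisotropic rescaling $u(x,t)\mapsto A\, u(\lambda_{1} y_{1},\dots,\lambda_{N} y_{N},\tau s)$ whenever $\lambda_{i}^{p_{i}}=A^{2-p_{i}}\tau$ for every $i$. Choosing $A=1/M$ and $\tau=M^{2-\bar p}\rho^{\bar p}$ forces $\lambda_{i}=M^{(p_{i}-\bar p)/p_{i}}\rho^{\bar p/p_{i}}$, which is precisely (up to the harmless factor $1/2$) the $i$-th side length of $\K_{\rho}(M)$. Thus one may rescale so that $\K_{\rho}(M)$ becomes a cube of side $1$ centred at the origin, the Harnack times $\pm M^{2-\bar p}(C_{2}\rho)^{\bar p}$ become $\pm C_{2}^{\bar p}$, and the rescaled solution $\tilde u$ satisfies $\tilde u(0,0)=u(0,0)/M=C_{1}$. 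Theorem~\ref{Harnack-Inequality} is thereby reduced to a \emph{normalised} Harnack statement in which $M$, $\rho$ and the anisotropic geometry of $\K_{\rho}(M)$ have disappeared.

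\emph{Lower bound: expansion of positivity.} I look for an explicit self-similar subsolution of Barenblatt type
\[
B(x,t)=t^{-\alpha}\Phi\bigl(x_{1}t^{-\beta_{1}},\dots,x_{N}t^{-\beta_{N}}\bigr),\qquad \alpha=\frac{N}{N(\bar p-2)+\bar p},\quad \beta_{i}=\frac{1-\alpha(p_{i}-2)}{p_{i}}.
\]
With these exponents, substitution into \eqref{mod} together with mass conservation $\alpha=\sum_{i}\beta_{i}$ reduces the problem to the stationary anisotropic Fokker--Planck equation
\[
\sum_{i=1}^{N}\partial_{i}\bigl(|\partial_{i}\Phi|^{p_{i}-2}\partial_{i}\Phi+\beta_{i}\,y_{i}\,\Phi\bigr)=0,
\]
and one produces $\Phi\ge 0$ compactly supported on an anisotropic box, with $\Phi(0)>0$. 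From $u(0,0)>0$ a De Giorgi / Caccioppoli energy iteration, quantified via the self-iteration trick mentioned in the abstract, yields an initial positivity $u\ge\sigma M$ on a small anisotropic cube around the origin. Placing a scaled translate of $B$ beneath $u$ there and invoking the parabolic comparison principle extends this positivity to the whole $\K_{\rho}(M)$ at the forward time $M^{2-\bar p}(C_{2}\rho)^{\bar p}$, which is the right-hand inequality in \eqref{Harnack}.

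\emph{Upper bound and conclusion.} The left-hand inequality in \eqref{Harnack} is a backward local $L^{\infty}$-estimate, a direct consequence of the anisotropic Caccioppoli and Moser-type bounds established earlier in the paper. Combining the two inequalities and undoing the rescaling yields \eqref{Harnack}.

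\emph{Main obstacle.} The delicate step is the Barenblatt construction: the exponents $\alpha,\beta_{i}$ are rigidly dictated by the scale-invariance constraints $\alpha(p_{i}-2)+p_{i}\beta_{i}=1$ together with mass conservation, and one must show that the resulting nonlinear elliptic equation admits a genuinely anisotropic, compactly supported $\Phi$ whose support has exactly the shape of $\K_{1}(1)$. This is where the algebraic restriction \eqref{param}, namely $p_{i}<\bar p(1+1/N)$, enters in a sharp way: it makes all $\beta_{i}$ positive, ensures $\Phi$ is integrable, and is exactly what keeps the self-iteration used for the initial positivity closing on the right anisotropic scale.
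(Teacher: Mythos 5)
Your proof sketch correctly identifies the scaling ansatz and the reduction to the anisotropic Fokker--Planck equation (your exponents $\beta_{i}$ coincide with the paper's $\alpha_{i}$, and the constraint $\alpha=\sum_{i}\beta_{i}$ is indeed mass conservation), but there are two genuine gaps that make the argument incomplete as stated.

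First, you write ``one produces $\Phi\ge 0$ compactly supported on an anisotropic box, with $\Phi(0)>0$'' and later claim the support has ``exactly the shape of $\K_{1}(1)$.'' This is precisely the hard part of the paper and you cannot obtain it by substitution: unlike the isotropic $p$-Laplacian, the anisotropic Fokker--Planck equation has no known explicit solution, so there is no closed-form $\Phi$ to write down. The existence must be proved abstractly (the paper does this by a Schauder fixed-point argument for the solution operator $\tilde{\S}_{s}$ on a suitable convex, weakly compact set), and even once a nontrivial nonnegative stationary $w$ is produced one only knows that $w>0$ \emph{somewhere}; lower-semicontinuity gives positivity on some small cube $K_{\delta_0}(x^{(0)})$, possibly far from the origin. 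The self-iteration/comparison argument (translate a scaled copy of $\B$ so its support fits inside the known positivity region, run the comparison principle, repeat coordinate by coordinate) is exactly what the paper uses to \emph{move} this positivity to a neighbourhood of the origin; it is not an alternative phrasing of ``$\Phi(0)>0$,'' it is the mechanism that makes $\Phi(0)>0$ true. Your claim that the support is exactly an intrinsic cube is moreover stronger than what the paper proves (it only gets an inner and an outer box).

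Second, your treatment of the left-hand (backward) inequality is wrong in kind, not just in detail. You describe it as ``a backward local $L^{\infty}$-estimate, a direct consequence of the anisotropic Caccioppoli and Moser-type bounds.'' Local boundedness estimates control $\sup u$ on a small cylinder by an integral average on a larger one; they cannot bound $\sup_{\K_{\rho}(M)}u(\cdot,-s)$ by the \emph{pointwise value} $u(0,0)$ at a \emph{later} time. For degenerate equations the two halves of the Harnack inequality are genuinely asymmetric. The paper proves the backward bound by contradiction: if the sup at time $-D$ were larger than $B\,u(0,0)$, one rescales so that the value at that point is $C$, applies the forward positivity-expansion argument with a Barenblatt subsolution emanating from it, and shows (using the algebraic condition \eqref{param} and a well-chosen exponent $\gamma>N/\bar p$) that this would force $v(0,0)$ to exceed $C\,D^{-\gamma}$, contradicting the normalisation. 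This argument cannot be replaced by a Caccioppoli/Moser estimate.

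Finally, note that a quantitative step you omit entirely is locating an initial positivity set for $u$: the intrinsic cylinders $\Q^{-}_{\rho}(\rho^{-N})$ come from a quasi-metric rather than a metric, so the classical Krylov--Safonov alternatives argument must be re-proved in that generality (Lemma \ref{Krylov-Safonov}), and then combined with the clustering and De Giorgi lemmas. Your sketch attributes the initial positivity to ``the self-iteration trick,'' but that trick serves a different purpose (positivity of the Barenblatt solution near the origin); the initial positivity of the generic solution $u$ needs this separate measure-theoretic machinery.
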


Let us make some comments on the statement,  significance and proof of the previous theorem.

\smallskip
- {\bf The intrinsic geometry.} \ A parabolic Harnack inequality for a non-homogeneous equation such as \eqref{mod} cannot hold in classical form. This was first realised for the parabolic $p$-Laplacian equation
\begin{equation}
\label{modp}
\partial_{t}u=\Delta_{p}u
\end{equation}
through an analysis of the so-called Barenblatt fundamental solutions, a family of explicit solution encompassing most of the features which distinguish the classical heat equation (and its quasilinear non-degenerate counterpart) from \eqref{modp}. The correct formulation of the Harnack inequality for \eqref{modp} was first found in \cite{DB2} when $p\ge 2$, and it has an {\em intrinsic} form. To explain briefly this term let us focus on the $p\ge 2$ case of \eqref{modp}. A Harnack inequality for non-negative solutions of a parabolic equation expresses a point-wise control on the solution (e.g., a pointwise lower bound) in a full spatial neighbourhood of a point in terms of its value at that point. The parabolic nature of the equation allows for such a control to hold only after a positive time delay  (in the case of lower bounds) has passed. For the heat equation this {\em waiting time} only depends on the size of the region where we seek for the lower bound and not on the solution, while for the parabolic $p$-Laplacian equation \eqref{modp}, its length depends on the value of the solution at the chosen point: the word {\em intrinsic} refers (not only, but mainly) to this phenomenon. 

In the case of \eqref{modp}, the value of the solution at the chosen point only affects the waiting time, while for the anisotropic equation \eqref{mod}, it determines the full shape, or {\em geometry}, of the region where the control is available. This is seen in the definition \eqref{intdef} of the {\em intrinsic cubes}: indeed,  in $K_{r}(M)$, $r$ plays the r\^ole of an {\em anisotropic radius}, while $M$ prescribes  the {\em anisotropic  geometry}. To justify the first statement, notice that the Lebesgue measure of $K_{r}(M)$ is always $r^{N}$ , regardless of $M$. Regarding the second, one can follow the well-known principle that {\em higher exponents give slower diffusion}, so that lower values of $M\simeq u(0,0)$ squeeze $K_{r}(M)$ in directions of slower-than-average diffusion ($p_{i}-\bar p>0$) and stretch it in directions of faster-than-average  diffusion ($p_{i}-\bar p<0$). 

\smallskip
- {\bf Barenblatt solutions.} \  One of the main byproducts of our proof is the construction of a family of self-similar Barenblatt solutions for \eqref{mod} and the analysis on their  basic properties. Self-similar solutions are by now a classical th\^eme and have been extensively studied in various parabolic nonlinear frameworks, see e.g. \cite[Ch. 16]{VAZ} and the therein cited literature. Their r\^ole turned out to be pivotal in understanding the general behaviour of solutions and has often been an important stepping-stone for treating more general equations and formulating sensible statements on the general expected results: compare the classical works of Pini \cite{PIN} and Hadamard \cite{HAD}, later generalised in the linear measurable setting by Moser \cite{MOS} or, in the singular/degenerate case, the first works \cite{DB2}, \cite{KW} employing the Barenblatt solutions,  generalised  in \cite{DGV-acta, DGVsing}.

For equation \eqref{mod}, the explicit form of the Barenblatt solutions is however unknown at present, and their existence  is obtained through an abstract approach. Naturally, we cannot assume any a-priori regularity  and the method heavily relies on the identification of the natural scalings of \eqref{mod} mentioned above, allowing to formulate the right notion of self-similarity.  More details on the difficulties that this approach involves will be made in the comments to the proof below.

\smallskip
- {\bf Assumptions.} \ The main condition required in the Harnack inequality is \eqref{param}. On one hand, $p_{i}>2$ for all $i$ means that we are settling ourselves in the {\em slow diffusion regime}. The main feature of this framework is that, for example, solutions of \eqref{modp} for $p>2$ preserve compactness of the support forward in time  (as opposed to what happens for the heat equation).  In the setting of the anisotropic equation \eqref{mod}, the support moves in different directions with different speed, in a way which has been precisely quantified in \cite{Mosconi} and plays a r\^ole in our proof.  The other condition $p_{i}<\bar p\, (1+1/N)$ requires that the powers $p_{i}$ are not too sparse, following the above mentioned principle  in problems with non-standard growth. Local boundedness holds in the larger range $p_i< \bar p\, (1+2/N)$, but we are not aware of counterexamples if this condition is violated. It would be interesting to know wether the Harnack inequality holds true also for $p_{{\rm max}}\in [\bar p\, (1+1/N), \bar p \, (1+2/N))$ but, if so,  its proof likely requires different techniques than the ones employed here. 

A few comments on the constants $C_{i}$ in the statement. As mentioned above, the Barenblatt solution we use is constructed in an abstract way and we do not know if a uniqueness theorem (up to translation and scaling) holds. The constant depends on a lower bound on the Barenblatt solution, hence, ultimately on the choice of the latter. Thus, it is rather undetermined from the  quantitative point of view.

Finally, the number $u(0, 0)$ is not a-priori well-defined for a weak solution. However, thanks to \cite[Corollary 4.3]{Mosconi}, any solution of \eqref{mod} under assumptions  \eqref{param} possesses an essentially u.s.c. representative, allowing to give a meaning to $u(0, 0)$. This ambiguity in the choice will then be eliminated by the a-posteriori continuity of the solution. Clearly, the theorem is meaningful only when $u(0, 0)>0$, for otherwise the claimed bounds trivially holds (assuming $\inf \emptyset=+\infty$, $\sup \emptyset=-\infty$).

\smallskip
- {\bf Outline of the proof}. \  As already mentioned, our first task is to build a family of Barenblatt solutions. We find all the natural scalings of \eqref{mod} and construct a bijection between solutions of \eqref{mod} and solutions of an anisotropic Fokker-Plank equation (see e.g. \cite{Carrillo-Toscani} for a similar approach). We then seek for a  stationary solution of the latter, which is found through a fixed point argument and comparison principles. Here, the slow diffusion regime plays a pivotal r\^ole in recovering sufficient compactness to apply Shauder fixed point theorem. Let us note that we rely on a weak continuity result (Lemma \ref{lemma31}, point 3) of independent interest, which we were not able to find in the literature. 

At this stage, the stationary solution of the Fokker-Plank equation is a rather irregular object  of little use. However, exploiting its correspondence to a Barenblatt self-similar solution of \eqref{mod} and using a self-iteration method based on comparison principles and translation invariance,  we are able to prove a positive lower bound in a small neighbourhood of the origin. Transferring the bound to the Barenblatt solution, we find a quantitative expansion of positivity rate for it. 

We then proceed in a manner reminiscent of the proof in \cite{DB2} of the Harnack inequality for \eqref{modp}, namely finding a positivity set  and then expand it forward in time through comparison with Barenblatt solutions. For the first step, we actually employ a simplification described in \cite{DGV-acta}, which makes use of the so-called {\em Clustering lemma} of \cite{localclustering}. We have to face two main difficulties: the intrinsic geometry of the problem, contrary to what happens in most instances of the theory, involves not only the time variables but also, and mainly, the spatial ones (in an anisotropic way). Secondly, 
even disregarding the geometry, the natural intrinsic cubes as per \eqref{intdef} come from a quasi-metric rather than from a metric. To face the first difficulty we heavily rely on the natural transformations leaving \eqref{mod} invariant; for the second one, we prove a general abstract version of the so-called Krylov-Safonov trick, of independent interest (Lemma \ref{Krylov-Safonov}).

\smallskip
- {\bf Consequences of Theorem \ref{Harnack-Inequality}}.\  A first corollary of the Harnack inequality is the H\"older continuity of solutions of \eqref{mod}, whose detailed proof is described in \cite{CianiVespri}. However, much more regularity is to be expected, as suggested by the ellitpic case briefly discussed below. 

An intrinsic Harnack inequality immediately follows from Theorem \ref{Harnack-Inequality} for solutions of 
\begin{equation}
\label{mod2}
\sum_{i=1}^{N}\partial_{i}(|\partial_{i}u|^{p_{i}-2}\partial_{i}u)=0.
\end{equation} 
Even in the elliptic case,  homogeneity is still missing, suggesting that  any scale invariant Harnack inequality must be of intrinsic form, as in the parabolic case. We state our Harnack inequality for \eqref{mod} in the following corollary.

\begin{corollary} Let $\K_{r}(M)$ be as in \eqref{intdef}  and $p_{i}$ as in \eqref{param}.  There exist constants $C_{1}, C_{2}\ge 1$, depending on $N$ and the $p_{i}$'s such that if $u\ge 0$ weakly solves \eqref{mod2} in $\K_{C_{2} \rho}(M)$, where $M=u(0)/C_{1}>0$, then
\begin{equation}
\label{H}
C_{2}^{-1} \sup_{\K_{\rho}(M)}u\le u(0)\le C_{2} \inf_{\K_{\rho}(M)}u.
\end{equation}
\end{corollary}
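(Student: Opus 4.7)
The plan is to view any non-negative weak solution $u$ of the stationary equation \eqref{mod2} as a time-independent local weak solution of the parabolic equation \eqref{mod}, and then invoke Theorem~\ref{Harnack-Inequality} at the origin.

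The first step is to extend $u$ to $\tilde u(x,t):=u(x)$ defined on $\K_{C_{2}\rho}(M)\times\R$. Since $\partial_{t}\tilde u\equiv 0$ and the spatial divergence operator in \eqref{mod} is exactly the left-hand side of \eqref{mod2}, a direct inspection of the weak formulations (testing with separable functions $\varphi(x)\eta(t)$ and noting that the vanishing time-derivative term drops out when integrated against $\eta'$) shows that $\tilde u$ is a non-negative local weak solution of \eqref{mod} on the whole cylinder. Choosing the essentially u.s.c.\ representative supplied by \cite[Corollary~4.3]{Mosconi}, the value $\tilde u(0,0)=u(0)$ is well defined and, by hypothesis, positive.

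The second step is the application of Theorem~\ref{Harnack-Inequality}. I would identify the Corollary's constant $C_{2}$ with the constant $C_{3}$ appearing in Theorem~\ref{Harnack-Inequality}, so that this single constant governs both the containment radius and the Harnack ratio. Since $\tilde u$ is defined on $\K_{C_{2}\rho}(M)\times\R$, any $T>M^{2-\bar p}(C_{2}\rho)^{\bar p}$ may be selected; the containment $\K_{C_{3}\rho}(M)\subseteq \K_{C_{2}\rho}(M)$ required by the theorem then holds trivially. Applying the theorem to $\tilde u$ at the origin yields
\[
\frac{1}{C_{2}}\sup_{\K_{\rho}(M)}\tilde u\bigl(\cdot,\,-M^{2-\bar p}(C_{2}\rho)^{\bar p}\bigr)\le \tilde u(0,0)\le C_{2}\inf_{\K_{\rho}(M)}\tilde u\bigl(\cdot,\,M^{2-\bar p}(C_{2}\rho)^{\bar p}\bigr),
\]
which, thanks to the time-independence of $\tilde u$, collapses exactly to \eqref{H}.

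No substantial obstacle is anticipated: the only technical verification is that the stationary extension of an elliptic weak solution is a weak parabolic solution, which is a routine distributional computation on separable test functions. The real content, namely the intrinsic geometry encoded in $\K_{\rho}(M)$ and the matching of the constants, is inherited verbatim from Theorem~\ref{Harnack-Inequality}.
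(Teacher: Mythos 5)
Your proposal is correct and matches the paper's intent exactly: the paper states the corollary ``immediately follows from Theorem~\ref{Harnack-Inequality}'' without giving details, and your argument supplies precisely the routine verification needed — that a time-independent extension of an elliptic weak solution satisfies the parabolic weak formulation (the boundary-in-time terms cancel against the $u\,\partial_t\varphi$ term, and the spatial term vanishes slice-by-slice), after which the stationary nature of $\tilde u$ collapses the waiting times and gives \eqref{H} with the Corollary's $C_2$ set equal to the Theorem's $C_3$.
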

Notice that condition \eqref{param} on the powers $p_{i}$ is in fact of parabolic nature, tied to the proof of \cite{Mosconi}. The proof of the elliptic Harnack inequality under the more natural condition $p_{{\rm max}}< N\, \bar p /(N-\bar p)$ is the object of future work.  
The scale invariance of the Harnack inequality, i.e., the fact that the constants in \eqref{H} do not depend on the radius or the solution, is crucial when dealing with Liouville-type theorems like the one below, proved in a standard way in the last section.

\begin{corollary} \label{liouville} Under assumption  \eqref{param}, any weak solution of \eqref{mod2} in the whole $\R^{N}$ bounded from below is constant.
\end{corollary}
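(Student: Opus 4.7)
The plan is to invoke the scale-invariant elliptic Harnack inequality \eqref{H} and let the radius tend to infinity—a routine strategy made possible precisely by the fact that the constants $C_1, C_2$ in \eqref{H} are independent of both $\rho$ and of the solution. First, since \eqref{mod2} depends only on gradients of $u$, adding a constant preserves the property of being a weak solution. Setting $m := \inf_{\R^{N}} u > -\infty$, the function $v := u - m$ is a nonnegative weak solution of \eqref{mod2} on $\R^N$ with $\inf_{\R^N} v = 0$. It suffices to prove $v \equiv 0$.

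Fix any $x_0 \in \R^N$ and suppose for contradiction that $v(x_0) > 0$. By translation invariance of \eqref{mod2}, the elliptic Harnack inequality applies with center $x_0$: set $M := v(x_0)/C_1 > 0$. The containment $x_0 + \K_{C_2 \rho}(M) \subseteq \R^N$ is trivial for every $\rho > 0$, so \eqref{H} yields $v(x_0) \le C_2 \inf_{x_0 + \K_\rho(M)} v$ for all $\rho > 0$. Now, the $i$-th side of $\K_\rho(M)$ has length $M^{(p_i - \bar p)/p_i} \rho^{\bar p/p_i}$, and with $M$ fixed this diverges as $\rho \to +\infty$ for every $i$. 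Hence the translates $x_0 + \K_\rho(M)$ exhaust $\R^N$, and the right-hand side tends to $C_2 \inf_{\R^N} v = 0$. This forces $v(x_0) \le 0$, a contradiction; therefore $v \equiv 0$ and $u$ is constant.

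There is no real obstacle to this argument beyond the elliptic Harnack inequality itself: once \eqref{H} is at hand, the only thing worth verifying is that $M$ can be kept fixed while $\rho$ grows without the constants degrading. This is exactly what the \emph{scale invariance} of \eqref{H} ensures; without it (as would be the case with a Harnack inequality whose constants depended on $\rho$ or on $v(x_0)$) the same reasoning would only produce boundedness of $v$, not constancy. A minor technical point is the meaning of the pointwise value $v(x_0)$, which is well defined via the essentially upper semicontinuous representative supplied by \cite[Corollary 4.3]{Mosconi}, or, a posteriori, via the Hölder continuity derived from Theorem \ref{Harnack-Inequality}.
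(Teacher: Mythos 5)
Your argument is correct, and it is the standard ``scale-invariant Harnack implies Liouville'' scheme also used in the paper, but carried out through the other half of inequality \eqref{H}. The paper works with the function $v_\eps = u - \inf_{\R^N} u + \eps/2$, finds via continuity and the intermediate value theorem a point $x_\eps$ where $v_\eps(x_\eps) = \eps$, applies the \emph{upper} Harnack bound $\sup_{x_\eps + \K_\rho(\eps/C_1)} v_\eps \le C_2\,\eps$, lets $\rho\to\infty$, and then sends $\eps\to 0$. You instead work with $v = u - \inf_{\R^N} u \ge 0$ directly, fix an arbitrary $x_0$ with $v(x_0)>0$, and apply the \emph{lower} Harnack bound $v(x_0) \le C_2 \inf_{x_0+\K_\rho(M)} v$; since the intrinsic cubes $x_0 + \K_\rho(M)$ exhaust $\R^N$ as $\rho\to\infty$ (all side lengths diverge because $\bar p/p_i>0$), the right-hand side decreases to $C_2\inf_{\R^N} v = 0$, forcing $v(x_0)\le 0$. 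Your route is slightly more economical: it dispenses with the $\eps$-shift and with the continuity/IVT step (you only need pointwise values to make sense, which the u.s.c. representative provides), whereas the paper's variant needs $\eps/2$ as a buffer precisely because the infimum of $v$ may not be attained, which would otherwise prevent applying \eqref{H} at a minimising point. Both approaches hinge exactly on the same feature you highlight, namely that $C_1, C_2$ are independent of $\rho$ and of the solution.
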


- {\bf Comparison with previous results}.\ 
Local boundedness of the solutions of \eqref{mod} has been first proved in \cite{Mingqi} under the condition $ p_{\rm max}<\bar p\, (1+2/N)$.  Some early regularity results in the plane are considered in \cite{MK}, and regularity for parabolic problems with non-standard growth of $p(x)$ type are contained in \cite{AMS, AZ, XC}. The $p(x)$ growth condition does not cover the simple equation \eqref{mod} and we are not aware of proofs of the H\"older continuity of solutions of the latter in general dimensions (see \cite[Remark 1.4]{BB} for a discussion of previous attempts), let alone of the Harnack inequality. 

In the elliptic setting much more is known regarding the regularity of solutions of \eqref{mod2}, or for more general non-standard equations, see \cite[Sections 5 and 6]{Marcellini} for the relevant literature. The most up-to-date result for \eqref{mod2} is in \cite{BB}, where  the Lipschitz regularity of its {\em bounded} solutions is proved {\em for any choice} of $p_{i}\ge 2$. 
The Harnack inequality for non-standard elliptic problems has been the object of various works: \cite{Alk, BCM, Ok, LS, HKLMP, HKL, Toivanen} focus on isotropic equations with non-standard growth of $p(x)$-type, while \cite{MP, L} deal with energies with Uhlenbeck structure and non-standard growth. However, none of the frameworks considered therein cover the anisotropic equation \eqref{mod2}: indeed, its Euler-Lagrange equation is degenerate/singular on the union of the coordinate axes, while non-standard functionals of $p(x)$- or Uhlenbeck-type exhibit this problem only at the origin. Moreover, as already remarked, the relevant feature of \eqref{H} lies in its scale invariance, while (quite naturally) this is not to be expected for the problems considered in the cited works, where either the constant depends on $u$ and $r$ or there is an additional term of non-homogeneous type. 
 
 \smallskip
- {\bf Structure of the paper}.\ 
Section \ref{pre} collects preliminary results, most of which are modifications of well-known theorems. The most relevant part is subsection \ref{scaling}, where we set up the geometry related to the natural scaling of the equation. In Section \ref{bare} we build the Barenblatt solution and study its positivity set. Section \ref{final} contains the proof of the main theorem, splitted in several lemmas.

\smallskip
- {\bf Notations}: 
\begin{itemize}
\item[-]
For $\xi\in \R$ and $p>2$ we  let $(\xi)^{p-1}=|\xi|^{p-2}\xi$. 
\item[-]
If $E$ is a measurable subset of $\R^{N}$, we denote by $|E|$ its Lebesgue measure. 
\item[-]
For $r>0$, $K_{r}(\bar x)$  stands for the cube $\{|x_{i}-\bar x_{i}|\le r/2: i=1, \dots, N\}$ and we write $K_{r}=K_{r}(0)$; the standard cylinder is denoted by $Q_{r}^{-}=K_{r}\times (-r^{2}, 0]$.  Notice that $|K_{1}|=|Q_{1}^{-}|=1$
\item[-]
Given $T\in (0, +\infty]$ and $\Omega \subset \R^N$ a rectangular domain, we let $\Omega_T= \Omega\times (0, T)$ while $S_T$ denotes the stripe $S_T=\R^N\times (0,T) $; more generally, for $s<t$ we will set $\Omega_{s,t}=\Omega\times (s,t)$ and $S_{s, t}= \R^{N}\times (s, t)$.   
\item[-]
For a measurable $u$, by $\inf u$ and $\sup u$ we understand the essential infimum and supremum, respectively; when  $u$ is defined on all of $\R^{N}$, we  let $\|u\|_{p}=\|u\|_{L^{p}(\R^{N})}$ for $1\le p\le \infty$; when $u:E\to \R$  and $a\in \R$, we  omit the domain when considering sub/super level sets, letting $\big[u\gtreqless a\big]=\big\{x\in E: u(x)\gtreqless a\big\}$; if $u$ is defined on some $\Omega_{T}$, we let  $u_{t}(x)=u(x, t)$ while $\partial_iu=\frac{\partial }{\partial x_i}u$, $\partial_tu=\frac{\partial }{\partial t}u$ denote the distributional derivatives.
\end{itemize}

\section{Preliminaries}\label{pre}
In this section we will discuss the functional analytic setting we pose ourselves in, the scaling properties of the solutions of \eqref{mod}, some basic energy estimates and the resulting anisotropic De Giorgi type lemma, comparison principles for \eqref{mod} and the associated Fokker-Planck equation and solvability of the Cauchy problem for \eqref{mod} for suitable initial data. Most of the material is standard, except maybe the discussion  in section \ref{scaling}. 

\subsection{Functional setting.}
Given ${\bf p}=(p_{1}, \dots, p_{N})$, with $p_{i}>1$, $i=1, \dots, N$ and $\Omega$ a rectangular domain, we define
\[ W^{1,{\bf{p}}}_o(\Omega):= \{ v \in W^{1,1}_o(\Omega) |\,  \partial_i v \in L^{p_i}(\Omega) \},\]
% \[ W^{1,{\bf{p}}}(\Omega):= \{ v \in L^1(\Omega) |\,  \partial_i v \in L^{p_i}(\Omega) \}, \]
\[  W^{1,{\bf{p}}}_{loc}(\Omega):= \{ v \in L^1_{loc}(\Omega) |\,  \partial_i v \in L^{p_i}_{loc}(\Omega) \}, \]
and for $s<t$
\[ L^{{\bf{p}}}(s, t;W^{1,{\bf{p}}}(\Omega)):= \{v \in L^1(s, t;W^{1,1}(\Omega))|\, \partial_i v, v \in L^{p_i}(\Omega_{s,t})   \}, \]
\[L^{{\bf{p}}}_{loc}(s, t;W^{1,{\bf{p}}}_{loc}(\Omega)):= \{v \in L^1_{loc}(s, t;W^{1,1}_{loc}(\Omega))|\, \partial_i v \in L^{p_i}_{loc}(\Omega_{s,t})   \}, \] 
\[L^{{\bf{p}}}_{loc}(s, t;W^{1,{\bf{p}}}_o(\Omega)):= \{v \in L^1_{loc}(s, t;W^{1,1}_o(\Omega))|\, \partial_i v \in L^{p_i}_{loc}(\Omega_{s,t})   \}. \] 
\noindent \vskip0.3cm 

\noindent A function \[ u \in L^{\infty}_{{\rm loc}}(s, t; L^2_{loc}(\Omega)) \cap L^{\bf{p}}_{loc}(s, t;W_{loc}^{1,{\bf{p}}}(\Omega))\] is  a local weak solution of \eqref{mod} in $(s, t)\times \Omega$, if for almost every $s<t_1<t_2<t$ and any  $\varphi \in C^{\infty}_{loc}(s,t;C_o^{\infty}(\Omega))$ it holds
\[
\int_{\Omega} u_{t_{1}}\,  \varphi_{t_{1}} \, dx -\int_{\Omega} u_{t_{2}}\,  \varphi_{t_{2}} \, dx+ \int_{t_1}^{t_2} \int_{\Omega} (-u \, \partial_t \varphi + \sum_{i=1}^N \, (\partial_i u)^{p_i-1} \, \partial_i \varphi) \, dx\, dt=0.
\]
By an approximation argument the latter actually holds for any test function $\varphi \in W^{1,2}_{loc}(s,t;L^2_{loc}(R))\cap L^{\bf{p}}_{loc}(0,T;W^{1,{\bf{p}}}_o(R))$ for any rectangular domain $R\subset \subset \Omega$. By a local weak solution of \eqref{mod} in $ S_{\infty}$, we mean a function $ u \in L^{\infty}(\R_+; L^2_{loc}(\R^N)) \cap L^{\bf{p}}_{loc}(\R_+;W^{1,{\bf{p}}}_{loc}(\R^N))$ such that for each $T>0$ and $\Omega \subset \subset \R^N$, $u$ is a weak local solution in $\Omega_T$. Finally, by an {\it $L^{\bf{p}}$ solution of \eqref{mod}} in $S_{T}$, we mean a local weak solution $u$ in $S_{T}$ such that $u \in \cap_{i=1}^N L^{p_i}(S_T)$.\vskip0.2cm
\noindent Next we recall the following  anisotropic embedding, obtained, e.g., from \cite[Theorem 2.4]{Mosconi} with $\sigma=2$, $\alpha_{i}\equiv 1$, $\theta=\bar p/\bar p^{*}$ and the generalised Young inequality. 
\begin{lemma}[Parabolic anisotropic Sobolev embedding]
Let $\Omega\subseteq \R^{N}$ be a bounded rectangular domain. There exists a constant $C=C(N,{\bf p}) <+\infty$ such that for any $u\in L^{1}(0, T; W^{1, 1}_{o}(\Omega))$ it holds
\begin{equation} \label{embedding}
    \ \int_{\Omega_{T}} |u|^l \, dx\, dt \le C\,  \bigg( \sup_{t\in [0,T]} \int_{\Omega} |u|^2(x, t)\,  dx + \int_{\Omega_{T}} \sum_{i=1}^N |\partial_i u|^{p_i} \, dx\, dt  \bigg)^{(N+{\bar{p}})/N}
\end{equation} whenever 
\begin{equation} \label{embedding-exponent}
\frac{2N}{N+2}\le \bar p:=\big(\frac{1}{N}\sum_{i=1}^{N}\frac{1}{p_{i}}\big)^{-1}<N,\qquad  l:={\bar{p}}\,  \big(1+ \frac{2}{N}  \big).
\end{equation}
\end{lemma}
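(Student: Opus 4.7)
The plan is to prove the inequality by combining an anisotropic spatial Sobolev embedding (Troisi-type) with a time interpolation between $L^{2}$ and $L^{\bar p^*}$, and then absorbing the resulting product of directional derivatives by the generalised Young inequality. For each fixed time $t\in(0,T)$, the function $u(\cdot,t)$ has compact support in $\Omega$, so the anisotropic Sobolev-Troisi inequality
\[
\|u(\cdot,t)\|_{L^{\bar p^{*}}(\Omega)}\le C\,\prod_{i=1}^{N}\|\partial_{i}u(\cdot,t)\|_{L^{p_{i}}(\Omega)}^{1/N},\qquad \bar p^{*}=\frac{N\bar p}{N-\bar p},
\]
is available precisely under the hypothesis $\bar p<N$; the lower bound $\bar p\ge 2N/(N+2)$ in \eqref{embedding-exponent} is exactly what makes $2\le \bar p^{*}$, so that $l=\bar p(1+2/N)$ lies in the interpolation interval $[2,\bar p^{*}]$.

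The interpolation exponent $\theta\in[0,1]$ such that $\frac1l=\frac{\theta}{2}+\frac{1-\theta}{\bar p^{*}}$ is selected by the requirement that the Sobolev factor carries an integrable time power. Direct computation gives $\theta=\frac{2}{N+2}$, so that $\theta\, l=\frac{2\bar p}{N}$ and $(1-\theta)\, l=\bar p$. Hölder's inequality in space followed by integration in time thus yields
\[
\int_{\Omega_{T}}|u|^{l}\, dx\, dt\le\int_{0}^{T}\|u(\cdot,t)\|_{2}^{2\bar p/N}\,\|u(\cdot,t)\|_{\bar p^{*}}^{\bar p}\, dt\le \Bigl(\sup_{t\in[0,T]}\int_{\Omega}|u|^{2}\, dx\Bigr)^{\bar p/N}\int_{0}^{T}\|u(\cdot,t)\|_{\bar p^{*}}^{\bar p}\, dt.
\]

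It remains to absorb the last integral into the gradient term. Raising the Troisi inequality to the $\bar p$-th power produces the product $\prod_{i}\|\partial_{i}u(\cdot,t)\|_{p_{i}}^{\bar p/N}$. Set $r_{i}=p_{i}N/\bar p$ so that, by definition of $\bar p$, one has $\sum_{i}1/r_{i}=1$, and apply the generalised Young inequality with exponents $r_{i}$ to the factors $a_{i}=\|\partial_{i}u\|_{p_{i}}^{\bar p/N}$; the $r_{i}$-th power of $a_{i}$ is exactly $\|\partial_{i}u\|_{p_{i}}^{p_{i}}$. Integrating in time gives
\[
\int_{0}^{T}\|u(\cdot,t)\|_{\bar p^{*}}^{\bar p}\, dt\le C\sum_{i=1}^{N}\int_{\Omega_{T}}|\partial_{i}u|^{p_{i}}\, dx\, dt.
\]
Combining the two displays and using the elementary bound $A^{\bar p/N}B\le (A+B)^{(N+\bar p)/N}$ with $A$ the supremum term and $B$ the gradient sum closes the proof.

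The only genuine difficulty is bookkeeping of the exponents: the choice of the interpolation parameter $\theta$ and the choice of the Young exponents $r_{i}$ are both rigid, and each is forced by the requirement that the final outcome be scale-coherent with the parabolic $L^{2}$--$L^{p_{i}}$ energy quantities that will later bound Caccioppoli-type estimates. Once these are identified, the argument is essentially mechanical, and the whole inequality can equivalently be deduced from \cite[Theorem 2.4]{Mosconi} with $\sigma=2$, $\alpha_{i}\equiv 1$, $\theta=\bar p/\bar p^{*}$.
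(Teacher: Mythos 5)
Your proof is correct and, modulo being spelled out in full, follows the same route the paper indicates: the paper derives \eqref{embedding} by invoking \cite[Theorem 2.4]{Mosconi} (itself a Troisi-plus-interpolation estimate) together with the generalised Young inequality, and your direct argument reconstructs precisely those ingredients, with all the exponent identities ($\theta=2/(N+2)$, $\theta l=2\bar p/N$, $(1-\theta)l=\bar p$, $\sum_i 1/r_i=1$, $r_i a_i$-powers collapsing to $\|\partial_i u\|_{p_i}^{p_i}$, and the elementary bound $A^{\bar p/N}B\le(A+B)^{(N+\bar p)/N}$) checking out. The one feature worth noting explicitly is that the hypothesis $\bar p\ge 2N/(N+2)$ is exactly what guarantees $2\le l\le\bar p^{*}$, so the interpolation step is legitimate, which you correctly observe.
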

\noindent
By applying the so-called {\it Local Clustering} lemma in \cite{localclustering} to $\min\{u, 1\}+\delta$ and letting $\delta\downarrow 0$, we get the following alternative form, which will be used in the following.
\begin{lemma}[Local clustering] \label{localclustering-revisited}
Let $u \in W^{1,1}(K_{\rho})$ satisfy for some constants $\bar C>0, \bar \alpha \in (0,1)$ 
\[
    \int_{K_{\rho}} |D\, (u-1)_{-}| \, dx \le \bar C\, \rho^{N-1} \qquad \text{\&} \qquad |[u>1] \cap K_{\rho}| \ge \bar\alpha\,  |K_{\rho}|.
\]
Then for every $ \lambda, \nu \in (0,1)$ there exists  $ y \in K_{\rho}$ and a number $\eps= \eps(\lambda,\nu,\bar C,\bar \alpha,N)\in (0,1)$ such that $y+K_{\eps\rho}\subseteq K_{\rho}$ and
\[
 |[u\ge  \lambda] \cap (y+K_{\eps \rho})| > (1-\nu) |K_{\eps \rho}|.
\]
Moreover, $\eps$ can be chosen arbitrarily small.
\end{lemma}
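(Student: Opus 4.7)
The plan is to reduce the statement to the local clustering lemma of \cite{localclustering} via truncation and a limiting procedure. For fixed $\delta\in(0, 1-\lambda)$, I would set
\[
v_\delta := \min\{u,1\} + \delta,
\]
so that $v_\delta\in W^{1,1}(K_\rho)$ takes values in the fixed interval $[\delta, 1+\delta]$. A pointwise calculation gives $\nabla v_\delta = -\nabla (u-1)_-$, hence $\int_{K_\rho}|\nabla v_\delta|\,dx\le \bar C\,\rho^{N-1}$, and since $v_\delta$ attains its maximum value $1+\delta$ precisely on $[u\ge 1]$, one also has $|[v_\delta\ge 1+\delta]\cap K_\rho|\ge \bar\alpha\,|K_\rho|$. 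Both bounds are uniform in $\delta$, and the function $v_\delta$ now lies in the class for which the original clustering lemma of \cite{localclustering} is formulated.

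Next I would apply that lemma to $v_\delta$ at the intermediate level $\lambda+\delta\in(\delta, 1+\delta)$, obtaining a constant $\eps = \eps(\lambda,\nu,\bar C,\bar\alpha,N)$, which is independent of $\delta$, and a point $y_\delta$ with $y_\delta+K_{\eps\rho}\subseteq K_\rho$ such that
\[
|[v_\delta \ge \lambda + \delta] \cap (y_\delta + K_{\eps\rho})| > (1-\nu)\,|K_{\eps\rho}|.
\]
Because $\lambda<1$, the super-level set satisfies $[v_\delta\ge \lambda+\delta] = [\min\{u,1\}\ge \lambda] = [u\ge\lambda]$, so the estimate rewrites directly as the desired one with $y$ replaced by $y_\delta$. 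Since the admissible locations $y_\delta$ lie in the compact set $K_{(1-\eps)\rho}$, a subsequence $y_{\delta_k}\to y\in K_{(1-\eps)\rho}$ exists; the strict measure inequality passes to the limit (if a marginal loss is needed one slightly reduces the right-hand side or applies the lemma with $\nu/2$ in place of $\nu$), and the addendum that $\eps$ may be taken arbitrarily small is inherited verbatim from the cited lemma.

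The main delicate point I anticipate is ensuring that the constant $\eps$ provided by the original lemma depends only on the five parameters claimed and, crucially, does not degenerate as $\delta\downarrow 0$. This rests on two facts that one should verify from the statement in \cite{localclustering}: the $L^1$ gradient bound and the density bound on $v_\delta$ at level $1+\delta$ are both uniform in $\delta$, and the target level $\lambda+\delta$ remains quantitatively separated from the extrema $\delta$ and $1+\delta$ of the range of $v_\delta$ (the ratio $(\lambda+\delta)/(1+\delta)\to\lambda$). Once this uniformity is in place, the final compactness argument is routine and the proof is complete.
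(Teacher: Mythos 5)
Your approach coincides with the paper's own (stated in one line in the text): apply the local clustering lemma of \cite{localclustering} to $\min\{u,1\}+\delta$ and let $\delta\downarrow 0$. The details you supply — that truncation reduces the gradient bound to the one on $(u-1)_-$ and gives a bounded function, that the density condition is preserved, and that $[v_\delta\ge\lambda+\delta]=[u\ge\lambda]$ for $\delta<1-\lambda$ — are correct, and in fact once you observe this last identity the conclusion holds for any single fixed $\delta\in(0,1-\lambda)$, so the limiting/compactness step you describe is harmless but unnecessary.
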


 \subsection{Scaling properties} \label{scaling}

We omit the proof of the following proposition, which is just a direct computation.
\begin{proposition} \label{transformation-group}
Let $u$ weakly solve \eqref{mod} in $\Omega_{T}$. For  $\theta,\rho>0$ define
\[
T_{\rho,\theta} (y,s)= \big(\theta^{(p_i-\bar{p})/p_i}\,  \rho^{\bar{p}/p_i}\,  y_i , \theta^{2-\bar{p}}\,  \rho^{\bar{p}}\,  s\big).
\]
Then the transformed function
\begin{equation}
\label{transformation}
    \T_{\rho, \theta}u\, (y,s)= \frac{1}{\theta}\, u \big( T_{\rho,\theta} (y,s) \big)
\end{equation}
weakly solves \eqref{mod} in $T_{\rho, \theta}^{-1}(\Omega_{T})$.
\end{proposition}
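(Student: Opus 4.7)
The proof is essentially a bookkeeping exercise: track how the chain rule acts under $T_{\rho,\theta}$ and verify that the exponents of $\theta$ and $\rho$ appearing on the two sides of \eqref{mod} match. The anisotropic scaling is designed precisely so that this works, and the key identity $\sum_i 1/p_i = N/\bar p$ enters to make the mismatch between directions cancel out after summation.

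Concretely, I would first work at the formal (strong) level to isolate the algebra. Set $v = \mathcal{T}_{\rho,\theta}u$ and let $(x,t) = T_{\rho,\theta}(y,s)$. A direct chain-rule computation gives
\[
 \partial_s v(y,s) = \theta^{1-\bar p}\rho^{\bar p}\, (\partial_t u)(x,t),\qquad \partial_{y_i} v(y,s)=\theta^{-\bar p/p_i}\rho^{\bar p/p_i}\, (\partial_i u)(x,t).
\]
Raising the spatial derivative to the $(p_i-1)$-power and differentiating once more in $y_i$ produces a prefactor
\[
 \theta^{-\bar p(p_i-1)/p_i}\rho^{\bar p(p_i-1)/p_i}\cdot \theta^{(p_i-\bar p)/p_i}\rho^{\bar p/p_i}=\theta^{1-\bar p}\rho^{\bar p},
\]
which is independent of $i$ and matches exactly the factor coming from $\partial_s v$. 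Hence $v$ formally satisfies \eqref{mod} on $T_{\rho,\theta}^{-1}(\Omega_T)$.

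To promote this to the weak formulation, I would fix a test function $\psi\in C^\infty_{\text{loc}}\bigl(T_{\rho,\theta}^{-1}(\Omega_T)\bigr)$ with compact spatial support, and substitute $\varphi(x,t):=\psi(T_{\rho,\theta}^{-1}(x,t))$ into the weak formulation for $u$ over the image cylinder. The map $T_{\rho,\theta}$ is affine with constant Jacobian $J=\theta^{N(1-\bar p/N)+\cdots}\rho^{\bar p}\cdot\prod_i\theta^{(p_i-\bar p)/p_i}\rho^{\bar p/p_i}$; all three terms in the weak identity (the boundary-in-time pairings, the $u\,\partial_t\varphi$ term, and each $(\partial_i u)^{p_i-1}\partial_i\varphi$ term) pick up the same overall power of $J$ after changing variables, modulo the direction-dependent rescaling of $\partial_{x_i}\varphi=\theta^{-(p_i-\bar p)/p_i}\rho^{-\bar p/p_i}\partial_{y_i}\psi$ and the factor $\theta$ from $u=\theta\,v$. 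By the same exponent accounting as above, these factors collect into an overall multiplicative constant, leaving precisely the weak formulation of \eqref{mod} for $v$ against $\psi$.

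It remains to check that $v\in L^\infty_{\text{loc}}(L^2_{\text{loc}})\cap L^{\mathbf p}_{\text{loc}}(W^{1,\mathbf p}_{\text{loc}})$ on $T_{\rho,\theta}^{-1}(\Omega_T)$, but this is immediate from the change-of-variable formula, since $T_{\rho,\theta}$ is a bi-Lipschitz bijection and each norm appearing in the functional setting transforms by a finite multiplicative factor depending only on $\rho,\theta$ and the $p_i$'s. There is no genuine obstacle here; the only thing to be careful about is to keep the four exponents ($\theta$ vs.\ $\rho$, and time vs.\ each spatial direction) straight throughout, which is exactly what the ansatz $T_{\rho,\theta}(y,s)=\bigl(\theta^{(p_i-\bar p)/p_i}\rho^{\bar p/p_i}y_i,\theta^{2-\bar p}\rho^{\bar p}s\bigr)$ is engineered to make consistent.
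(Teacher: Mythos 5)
Your proof is correct and is exactly the ``direct computation'' that the paper omits: chain rule at the formal level, then a change of variables with a $T_{\rho,\theta}$-pulled-back test function in the weak formulation, both using the ansatz to make the prefactors match. One small inaccuracy in your framing: you say the identity $\sum_i 1/p_i = N/\bar p$ is what makes ``the mismatch between directions cancel out after summation,'' but your own computation shows that each diffusion term $\partial_{y_i}\big((\partial_{y_i}v)^{p_i-1}\big)$ individually acquires the prefactor $\theta^{1-\bar p}\rho^{\bar p}$, matching $\partial_s v$ with no summation needed; the identity $\sum_i 1/p_i = N/\bar p$ enters only when you compute the spatial Jacobian $\prod_i\theta^{(p_i-\bar p)/p_i}\rho^{\bar p/p_i}=\rho^N$ in the change of variables (and the expression you wrote for $J$ is garbled, though this is harmless since all you use is that the common factor $\rho^N\theta$ divides out of the weak identity).
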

Due to the latter proposition, it is convenient to set
\begin{equation}\label{alpha}
\sigma:= N\, (\bar{p}-2)+\bar{p}\qquad  \alpha:= \frac{N}{\sigma}, \qquad \text{and} \qquad\alpha_i:=\frac{N\, (\bar p -p_{i})+\bar p}{\sigma \, p_{i}},  
\end{equation}
(notice that, under assumption \eqref{param}, $\alpha_{i}>0$ for all $i=1, \dots, N$),  so that
\[
\T_{\rho, \theta\rho^{-N}} u\, (y, s)= \frac{\rho^{\sigma\, \alpha}}{\theta}\, u \big(\theta^{(p_i-\bar{p})/p_i} \, \rho^{\sigma\, \alpha_{i}} \, y_i , \theta^{2-\bar{p}} \, \rho^{\sigma}\, s\big).
\]
The following properties of $\T$ will be useful throughout calculations:
\[
 \T_{\rho_{1},\theta_{1}} \circ \T_{\rho_{2},\theta_{2}}= \T_{\rho_{1} \rho_{2}, \theta_{1} \theta_{2}}\qquad \T_{\rho,\theta}^{-1}= \T_{\rho^{-1},\theta^{-1}}
\]
and similarly for the trasformation $T_{\rho, \theta}$.
The previous scaling suggests the natural geometry where to settle problem \eqref{mod}. More precisely, we define the intrinsic anisotropic cube as
\[
    \K_{\rho}(\theta):=  T_{\rho, \theta}(K_{1}),\qquad \K_{\rho}:=\K_{\rho}(1),
\]
(here and in what follows we will use the same symbol $T_{\rho, \theta}$ to denote the action of $T_{\rho, \theta}$ on the space variables only) and the intrinsic anisotropic cylinders as
\[
    \Q_{\rho}^{-}(\theta):= T_{\rho, \theta}(Q_{1}^{-}), \qquad \Q_{\rho}^{-}:=\Q_{\rho}^{-}(1).
\]
Notice that in the anisotropic cubes the parameter $\rho$ prescribes the size, while $\theta$ determines its anisotropic geometry: indeed, the volume of $\K_{\rho}(\theta)$ does not depend on $\theta$, since for each $\theta,\rho>0$ 
\[
|\K_{\rho}(\theta)|= \rho^N .
\] 
The following property can be readily checked: 
\begin{equation}
\label{tr}
T_{\rho, \theta}(K_{R})=\K_{R\,  \rho }(R\, \theta), \qquad T_{\rho, \theta}(Q_{R}^{-})=\Q_{R\, \rho}^{-}(R\, \theta),
\end{equation}
and in particular it holds $\K_{R}(R)=K_{R}$ and $\Q_{R}^{-}(R)=Q_{R}^{-}$.
 An important special case of the  transformation \eqref{transformation} is obtained when $v$ does not depend on the time variable and $\theta=\rho^{-N}$: using the notations in \eqref{alpha} we define
\begin{equation}
\label{trho}
\T_{\rho} v\, (y):= \T_{\rho, \rho^{-N}} v\, (y)=\rho^{\sigma\alpha}\,  v\big( \rho^{\sigma\alpha_{i}}\, y_i\big).
\end{equation}
By a change of variables one can  check that $\T_{\rho}:L^{1}(\R^{N})\to L^{1}(\R^{N})$ is an isometry, and moreover
\begin{equation}
\label{taus}
(\T_{\rho,\rho^{-N}}u)_{s}=\T_{\rho} u_{\rho^{\sigma}s}\,.
\end{equation}
As we will see, reasonable solutions of \eqref{mod} preserve the $L^{1}$-norm in time, and therefore we will say that $u$ is a {\it self-similar solution} of \eqref{mod} in $S_{\infty}$ if $\T_{\rho, \rho^{-N}} u=u$ for all $\rho>0$.

\noindent Closely related transformations are 
\begin{equation}\label{continuous-transformation}
   \Phi u \, (y, s):= e^{\alpha s} \, u( e^{\alpha_i s}\, y_i, e^s), \quad \text{and the inverse} \quad \Psi w\, (x, t)=t^{-\alpha}\,  w(t^{-\alpha_i}\, x_i, \log t).
\end{equation}
Clearly if $u$ is defined on $\R^{N}\times [t_{0}, +\infty)$, $t_{0}>0$ then $\Phi u$ is defined on $\R^{N}\times [\log t_{0}, +\infty)$ and vice-versa if $w$ is defined on $\R^{N}\times [s_{0}, +\infty)$, $\Psi w$ is defined on $\R^{N}\times [e^{s_{0}}, +\infty)$. Due to \eqref{taus}, it holds
\begin{equation}
\label{phit}
(\Phi u)_{s}=\T_{e^{s/\sigma}}u_{e^{s}},\qquad (\Psi w)_{t}=\T_{t^{1/\sigma}}w_{\log t}.
\end{equation}
\noindent 
By \cite{CianiVespri2}, $\Phi$ brings solutions of  \eqref{mod} in $S_{t_{0}, \infty}$, to solutions of the anisotropic Fokker-Planck  equation 
\begin{equation}\label{FokkerPlanck}
  \partial_{s}w= \sum_{i=1}^{N}\partial_i \big[( \partial_i w  )^{p_i-1} +\alpha_i y_i w \big] 
 \end{equation}
in $S_{\log t_{0}, \infty}$ and $\Psi$ does the opposite. Using \eqref{taus}, \eqref{phit} together with $\T_{\rho_{1}}\T_{\rho_{2}}=\T_{\rho_{1}\rho_{2}}$, we see that for a solution $u$ of \eqref{mod} in $S_{\infty}$ 
\[
(\Phi \T_{\rho, \rho^{-N}}u)_{s}=\T_{e^{s/\sigma}}\big(\T_{\rho, \rho^{-N}} u\big)_{e^{s}}=\T_{e^{s/\sigma}}\T_{\rho} u_{\rho^{\sigma}e^{s}}=\T_{\rho e^{s/\sigma}}u_{\rho^{\sigma}e^{s}}=(\Phi u )_{\sigma\log \rho+s}
\]
for every $\rho>0$, from which we readily infer the following proposition.
\begin{proposition}
A function $u$ is a self-similar solution of \eqref{mod} in $S_{\infty}$ if and only if $\Phi u$ is a stationary solution of \eqref{FokkerPlanck} in $\R^{N+1}$. 
\end{proposition}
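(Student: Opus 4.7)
The plan is to read off both implications from the identity
\[
(\Phi \T_{\rho, \rho^{-N}}u)_{s}=(\Phi u )_{\sigma\log \rho+s},\qquad \rho>0,\ s\in\R,
\]
which was established in the paragraph immediately preceding the statement, together with the invertibility of $\Phi$ via $\Psi$ recorded in \eqref{continuous-transformation} and the fact that $\Phi$ sends solutions of \eqref{mod} in $S_{\infty}$ to solutions of \eqref{FokkerPlanck} in $\R^{N+1}$ (note that $\Phi u(y,s)=e^{\alpha s}u(e^{\alpha_i s}y_i,e^s)$ is defined for every $s\in\R$ since $e^s>0$).

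For the forward direction, I would assume $u$ is self-similar, so $\T_{\rho,\rho^{-N}}u=u$ for every $\rho>0$. Applying $\Phi$ to both sides and invoking the boxed identity yields $(\Phi u)_{\sigma\log\rho+s}=(\Phi u)_{s}$ for every $\rho>0$ and (a.e.) $s\in\R$. Since $\rho\mapsto \sigma\log\rho$ is a bijection of $(0,\infty)$ onto $\R$, the time slice $(\Phi u)_{s}$ does not depend on $s$, which is precisely the assertion that $\Phi u$ is a stationary solution of \eqref{FokkerPlanck}.

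For the converse, I would suppose $\Phi u$ is stationary, so $(\Phi u)_{s'}=(\Phi u)_{s}$ for all $s,s'\in\R$. Choosing $s'=\sigma\log\rho+s$ and using the boxed identity in the other direction gives $(\Phi \T_{\rho,\rho^{-N}}u)_{s}=(\Phi u)_{s}$ for every $\rho>0$ and $s\in\R$. Applying the inverse transformation $\Psi$ from \eqref{continuous-transformation} yields $\T_{\rho,\rho^{-N}}u=u$ for every $\rho>0$, i.e., $u$ is self-similar.

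I do not expect any substantive obstacle: the content of subsection \ref{scaling} was designed exactly so that this equivalence becomes an algebraic identity between the one-parameter family $\{\T_{\rho,\rho^{-N}}\}_{\rho>0}$ of self-similar rescalings of \eqref{mod} and the time-translation group $\{s\mapsto s+\sigma\log\rho\}$ acting on solutions of \eqref{FokkerPlanck}, conjugated by $\Phi$. The only technical point worth verifying along the way is that $\Phi$ and $\Psi$ preserve the regularity class in which weak solutions of \eqref{mod} and \eqref{FokkerPlanck} are defined, so that "stationary solution" and "self-similar solution" can legitimately be equated under $\Phi$; this however follows from the change-of-variables formulae underlying \eqref{phit} and from the fact, recorded before the statement via \cite{CianiVespri2}, that $\Phi$ intertwines the two equations.
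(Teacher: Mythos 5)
Your proposal is correct and follows exactly the paper's intended reasoning: the paper leaves the deduction implicit with the phrase ``from which we readily infer,'' and you simply spell out both implications by unwinding the conjugation identity $(\Phi \T_{\rho,\rho^{-N}}u)_s = (\Phi u)_{\sigma\log\rho + s}$ together with the surjectivity of $\rho\mapsto\sigma\log\rho$ and the invertibility of $\Phi$ via $\Psi$.
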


 \noindent In the following we will call a self-similar solution to \eqref{mod} in $S_{\infty}$ a {\it{Barenblatt fundamental solution}}, and we will denote it with $\B$, in analogy with the literature about the $p$-Laplacian. Moreover, we will henceforth use coordinates $(x, t)$ for the prototype equation \eqref{mod} and $(y, s)$ for the Fokker-Planck equation \eqref{FokkerPlanck}.

\subsection{Energy inequality and consequences}
The following energy estimate for solutions of  \eqref{mod}, is well known, see e.g. \cite[Lemma 3.1]{Mosconi} for a proof.
\begin{lemma}[Energy inequality] 
Let $u$ be a local weak solution of \eqref{mod} in $K_{R}\times [s_{1}, s_{2}]$. Then,
for each test function of the form 
\[ 
\eta(x, t) = \prod_{i=1}^N \eta_i^{p_i}(x_i,t), \qquad
 \eta_i \in C^{\infty}(s_{1},  s_{2};C^{\infty}_{c}(-R/2, R/2)) 
\] 
we have, for some $C=C(N, {\bf p})>0$,
\begin{equation} \label{energy-estimates}
\begin{aligned}
    \int_{K_R}& (u_{t}-k)_{\pm}^2\,  \eta_{t} dx \bigg|_{t=s_{1}}^{t=s_{2}} + \frac{1}{C}\sum_{i=1}^N \int_{s_{1}}^{s_{2}}\int_{K_R} |\partial_i (\eta\, (u-k)_{\pm})|^{p_i}  \, dx\, d\tau \\
    & \le  C \bigg{\{} \int_{s_{1}}^{s_{2}}\int_{K_{R}} (u-k)_{\pm}^2\,  |\partial_t \eta| \, dx\,d\tau + C\sum_{i=1}^N \int_{s_{1}}^{s_{2}}\int_{K_{R}} (u-k)_{\pm}^{p_i}\,  |\partial_i \eta^{\frac{1}{p_{i}}}|^{p_i} \, dx\, d\tau  \bigg{\}} .
    \end{aligned}
\end{equation} \noindent
\end{lemma}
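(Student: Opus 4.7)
The plan is a standard Caccioppoli computation adapted to the anisotropic structure, with the weight $\eta=\prod_i \eta_i^{p_i}$ chosen precisely so that, after Young's inequality, the lower order term on the right takes the form $|\partial_i\eta^{1/p_i}|^{p_i}$ and not something weaker. Because $\partial_t u$ is only a distribution, I would first pass to Steklov averages $u_h$, which satisfy the equation pointwise in time, test the averaged equation against $\varphi=\eta\,(u_h-k)_{\pm}$ over $K_R\times[s_1,s_2]$, and finally let $h\downarrow 0$ using the regularity $u\in L^\infty_{\rm loc}(L^2_{\rm loc})\cap L^{\mathbf p}_{\rm loc}(W^{1,\mathbf p}_{\rm loc})$ and the usual convergence properties of Steklov averages.

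For the time derivative part, the chain rule gives the identity
\[
\int_{s_1}^{s_2}\!\!\int_{K_R}\partial_tu\,(u-k)_{\pm}\,\eta\,dx\,dt=\tfrac12\int_{K_R}(u-k)_{\pm}^2\eta\,dx\Big|_{s_1}^{s_2}-\tfrac12\int_{s_1}^{s_2}\!\!\int_{K_R}(u-k)_{\pm}^2\,\partial_t\eta\,dx\,dt,
\]
which directly produces the boundary slice term and the $(u-k)_\pm^2|\partial_t\eta|$ contribution on the right-hand side of \eqref{energy-estimates}. For the spatial part, a direct computation yields
\[
(\partial_iu)^{p_i-1}\partial_i\varphi=\eta\,|\partial_i(u-k)_{\pm}|^{p_i}\pm(\partial_iu)^{p_i-1}(u-k)_\pm\partial_i\eta,
\]
so that the first term is the good one (coercive) and the second is a cross term to be controlled via Young's inequality with exponents $p_i/(p_i-1)$ and $p_i$, balanced through the factor $\eta^{(p_i-1)/p_i}$:
\[
|\partial_iu|^{p_i-1}(u-k)_\pm|\partial_i\eta|\le \tfrac12\,\eta\,|\partial_i(u-k)_\pm|^{p_i}+C\,(u-k)_\pm^{p_i}\,\frac{|\partial_i\eta|^{p_i}}{\eta^{p_i-1}}.
\]

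The algebraic core of the proof is the identity $|\partial_i\eta|^{p_i}\eta^{-(p_i-1)}=p_i^{p_i}\,|\partial_i\eta^{1/p_i}|^{p_i}$, which follows from the explicit product structure $\eta=\prod_j\eta_j^{p_j}$ after a direct cancellation of the $\eta_j$-powers: one computes $|\partial_i\eta|^{p_i}=p_i^{p_i}\eta_i^{p_i(p_i-1)}|\partial_i\eta_i|^{p_i}\prod_{j\ne i}\eta_j^{p_ip_j}$, whereas $|\partial_i\eta^{1/p_i}|^{p_i}=|\partial_i\eta_i|^{p_i}\prod_{j\ne i}\eta_j^{p_j}$, and the extra factor is exactly $\eta^{p_i-1}$. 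This is the reason for the peculiar exponent $p_i$ in the ansatz $\eta=\prod_j\eta_j^{p_j}$. Absorbing the $\tfrac12\eta|\partial_i(u-k)_\pm|^{p_i}$ piece into the left-hand side, I am left with $\sum_i\int\eta\,|\partial_i(u-k)_\pm|^{p_i}$ on the good side and exactly $\sum_i\int(u-k)_\pm^{p_i}|\partial_i\eta^{1/p_i}|^{p_i}$ on the bad side.

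To recast the left-hand side in the form $\sum_i\int|\partial_i(\eta(u-k)_\pm)|^{p_i}$ stated in \eqref{energy-estimates}, I would expand $\partial_i(\eta(u-k)_\pm)=\eta\,\partial_i(u-k)_\pm+(u-k)_\pm\partial_i\eta$, use the elementary $|a+b|^{p_i}\le 2^{p_i-1}(|a|^{p_i}+|b|^{p_i})$, and reduce $\eta^{p_i}\le\eta$ by assuming without loss of generality $0\le\eta_i\le 1$ (otherwise rescale the $\eta_i$); the resulting extra term $(u-k)_\pm^{p_i}|\partial_i\eta|^{p_i}$ is reabsorbed on the right via the same identity $|\partial_i\eta|^{p_i}\le C|\partial_i\eta^{1/p_i}|^{p_i}$. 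The main technical obstacle is precisely this algebraic bookkeeping: every step of the Young/absorption procedure must leave no stray $\eta_i$-factor with the wrong exponent, since only the specific power $p_i$ in $\eta=\prod_j\eta_j^{p_j}$ makes the cancellation work and yields the scale-invariant right-hand side used throughout the paper.
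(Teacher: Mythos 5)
The paper does not give its own proof of this lemma; it just cites \cite[Lemma~3.1]{Mosconi}. Your proposal is a complete argument along the standard Caccioppoli lines any such reference would follow: Steklov averaging in time, testing with $\pm\eta\,(u_h-k)_{\pm}$, integration by parts for the time derivative, and Young's inequality with exponents $(p_i/(p_i-1),\,p_i)$ for the cross term. The decisive algebraic point---that $|\partial_i\eta|^{p_i}\,\eta^{-(p_i-1)}=p_i^{p_i}\,|\partial_i\eta^{1/p_i}|^{p_i}$ because of the anisotropic ansatz $\eta=\prod_j\eta_j^{p_j}$ and the fact that $\partial_i$ only hits $\eta_i$---is correctly identified and verified; this is precisely the reason the coefficient on the right takes the scale-invariant form $|\partial_i\eta^{1/p_i}|^{p_i}$.

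One small caveat: the step where you pass from $\sum_i\int\eta\,|\partial_i(u-k)_\pm|^{p_i}$ to $\sum_i\int|\partial_i(\eta\,(u-k)_\pm)|^{p_i}$ on the good side uses $\eta^{p_i}\le\eta$ and $|\partial_i\eta|^{p_i}\le p_i^{p_i}|\partial_i\eta^{1/p_i}|^{p_i}$, both of which rely on $0\le\eta\le 1$. This is \emph{not} without loss of generality by rescaling $\eta_i\mapsto\eta_i/\|\eta_i\|_\infty$: the two sides of \eqref{energy-estimates} scale inhomogeneously under $\eta\mapsto\Lambda^{-1}\eta$ (the slice term and the right side scale like $\Lambda^{-1}$, while $\int|\partial_i(\eta(u-k)_\pm)|^{p_i}$ scales like $\Lambda^{-p_i}$), so the constant you get back for the un-normalized $\eta$ depends on $\|\eta\|_\infty$. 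In practice this is harmless because every application of the lemma in the paper takes $0\le\eta_i\le 1$, but the normalization should either be stated as a hypothesis or absorbed into the constant explicitly rather than dismissed as WLOG.
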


In a standard way we can prove a de Giorgi-type Lemma. 
\begin{lemma}[De Giorgi  Lemma]\label{DG} Let $u\ge 0$ be a local weak solution to \eqref{mod} in $Q_{1}^{-}$ and ${\bf p}$ obey \eqref{embedding-exponent}. Then for every $a\in(0,1]$ there exist  $\mu_{a}>0$ depending on $a, {\bf p}$ and $N$  such that
\[
        |[u \le a]\cap Q_1^{-}| \le \mu_{a}\,  |Q_1^{-}| \qquad \Rightarrow\qquad
   \inf_{Q_{1/2}^{-}} u \ge \frac{1}{2}\, a.
\]
If, in addition, it holds $u\le 1$ in $Q_{1}^{-}$, 
\[
        |[u\ge a]\cap Q_1^{-}| \le \mu_{a}\,  |Q_1^{-}| \qquad \Rightarrow \qquad 
    \sup_{Q_{1/2}^{-}}u \le \frac{3}{2}\, a.
\]
\end{lemma}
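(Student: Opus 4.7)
The argument is a De Giorgi iteration adapted to the anisotropic parabolic setting. I detail the lower-bound statement; the upper-bound one follows by replacing $(u-k)_-$ with $(u-k)_+$ and using the assumption $u \le 1$ to bound the truncation from above. Set
\[
r_n := \tfrac{1}{2} + \tfrac{1}{2^{n+1}}, \qquad k_n := \tfrac{a}{2} + \tfrac{a}{2^{n+1}}, \qquad Q_n := K_{r_n} \times (-r_n^2, 0],
\]
so that $Q_0 = Q_1^-$, $Q_n \searrow Q_{1/2}^-$, $k_0 = a$, $k_n \searrow a/2$; put $A_n := [u\le k_n] \cap Q_n$ and $Y_n := |A_n|/|Q_n|$. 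The goal is a recursive inequality of the form $Y_{n+1} \le C(a)\, b^n\, Y_n^{1 + \bar p/N}$, so that the standard fast geometric convergence lemma produces $\mu_a > 0$ such that $Y_0 \le \mu_a$ forces $Y_n \to 0$, which is precisely the desired conclusion $u \ge a/2$ a.e.\ on $Q_{1/2}^-$.

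\textbf{Key steps.} First, choose a product cut-off $\eta_n(x,t) = \tau_n(t) \prod_i \chi_{n,i}(x_i)^{p_i}$ with $\eta_n \equiv 1$ on $Q_{n+1}$, supported in $Q_n$, and $|\partial_t \eta_n| + \sum_i |\partial_i \eta_n^{1/p_i}|^{p_i} \le C\, 4^n$. Applying \eqref{energy-estimates} with level $k_n$ and the minus sign, and using $(u-k_n)_- \le k_n \le a \le 1$ on its support to control the $L^{p_i}$-truncation terms by $a^{p_i}\, \mathbf 1_{A_n}$, gives
\[
\sup_t \int_{K_{r_{n+1}}} (u-k_n)_-^2\, dx + \sum_i \iint_{Q_{n+1}} |\partial_i (u-k_n)_-|^{p_i}\, dx\,dt \;\le\; C(a)\, 4^n\, |A_n|.
\]
Second, multiply $(u-k_n)_-$ by a purely spatial cut-off $\zeta_n$ between $K_{r_{n+1}}$ and $K_{(r_n+r_{n+1})/2}$ and apply \eqref{embedding} to $v_n := \zeta_n (u-k_n)_-$: the left-hand side controls $\iint_{Q_{n+1}} (u-k_n)_-^l$ and the right-hand side, by the previous step, is at most $C(a)\, b^n |A_n|^{(N+\bar p)/N}$, where $l = \bar p(1 + 2/N)$. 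Third, on $A_{n+1}$ one has $(u-k_n)_- \ge k_n - k_{n+1} = a/2^{n+2}$, so Chebyshev's inequality yields
\[
\Big(\frac{a}{2^{n+2}}\Big)^l |A_{n+1}| \;\le\; \iint_{Q_{n+1}} (u-k_n)_-^l\, dx\,dt \;\le\; C(a)\, b^n\, |A_n|^{(N+\bar p)/N},
\]
and dividing by $|Q_{n+1}|$ (comparable to $1$) delivers the announced recursion.

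\textbf{Upper bound and main difficulty.} For the second statement, take increasing levels $k_n := \tfrac{3a}{2} - \tfrac{a}{2^{n+1}} \nearrow \tfrac{3a}{2}$ and $A_n^+ := [u \ge k_n] \cap Q_n$; the hypothesis $u \le 1$ gives $(u-k_n)_+ \le 1 - k_n \le 1$ on its support, so $(u-k_n)_+^{p_i} \le \mathbf 1_{A_n^+}$ and the right-hand side of \eqref{energy-estimates} is again $\le C\, 4^n |A_n^+|$; the rest of the iteration is identical. The principal technical point, and the one requiring the anisotropic Sobolev inequality \eqref{embedding} rather than an isotropic one, is the merging of the different directional norms $L^{p_i}$ coming out of the energy estimate into a single super-linear exponent $1 + \bar p/N$ in the resulting recursion: this is possible precisely because the left-hand side of \eqref{embedding} is an isotropic $L^l$ norm with the Sobolev exponent $l = \bar p(1 + 2/N)$, which exceeds the $L^2$ norm appearing in the Chebyshev step. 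The product structure of the cut-off $\eta_n$ is essential so that each $\partial_i \eta_n^{1/p_i}$ factors nicely and the energy inequality in the product form \eqref{energy-estimates} can be invoked without losing powers of $n$.
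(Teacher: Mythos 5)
Your proof is correct and follows essentially the same De Giorgi iteration as the paper: same dyadic sequence of cylinders and levels, energy estimate \eqref{energy-estimates} with a product cutoff, anisotropic Sobolev embedding \eqref{embedding}, Chebyshev, and fast geometric convergence; the only cosmetic difference is that you introduce an auxiliary spatial cutoff $\zeta_n$ before applying \eqref{embedding}, whereas the paper plugs $\eta_n(u-k_n)_\pm$ directly into the embedding, noting $\eta_n^2 \le \eta_n$ (and the paper details the upper-bound case while you detail the lower one, each declaring the other analogous).
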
 
\begin{proof}
We give a brief proof of the second statement, the first one being analogous. Let, for $n \in \N$,
\[
    \rho_n= \bigg(\frac{1}{2}+\frac{1}{2^{n+1}} \bigg),\quad \quad 
    k_n=  a\bigg(\frac{3}{2}-\frac{1}{2^{n+1}} \bigg), \quad \quad K_{n}=K_{\rho_{n}}\quad \quad Q_n^{-}=Q_{\rho_{n}}^{-}.
\]
We apply \eqref{energy-estimates} to $(u-k_{n})_{+}$ with  $\eta_n$ of the stipulated form with $\eta_{n}=1$ in $Q_{n+1}$, $\eta_{n}=0$ on the parabolic boundary of $Q_{n}^{-}$ and 
\[
0\le \eta_{n}\le 1,\qquad |\partial_{t}\eta_{n}|+|\nabla \eta_{n}|\le C\, 2^{n}.
\] 
Since $\eta_{n}(\cdot, -1)\equiv 0$, the energy inequality \eqref{energy-estimates} together with the bound $|u|\le 1$ yields
\begin{equation}
\label{ref}
    \begin{split}
     \int_{K_{n}}& (u_{t}-k_n)_+^2 \, (\eta_{n})_{t} \, dx + \frac{1}{C}\sum_{i=1}^N \int_{Q_n^{-}} |\partial_i(\eta_{n} \, (u-k_n)_+)|^{p_i} \, dx\, d\tau\\
     &\le   C\, 2^{c\, n}\,  \bigg{\{}  \int_{Q_n^{-}} (u-k_n)_+^2  \, dxdt + \sum_{i=1}^N  \int_{Q_n^{-}} |(u-k_n)_+|^{p_i}  \, dx\, d\tau  \bigg{\}} \le C\,  2^{c \, n} |[u>k_n] \cap Q_n^{-}|
    \end{split}
\end{equation}
for all $t\in [-\rho_{n}^2, 0]$, where $C=C(N, {\bf p}, a)$ and $c=c({\bf p})$.
Let $A_n=[u>k_n] \cap Q_n^{-}$. By Chebyshev's inequality and the assumptions on $\eta_{n}$ it holds, for $l$ given in \eqref{embedding-exponent}
\[
\big( \frac{a}{2^{n+1}}  \big)^l  |A_{n+1}|=(k_{n}-k_{n+1})^{l}\, |A_{n+1}| \le \int_{Q_{n+1}^{-}} |(u-k_{n})_+|^{l} \, dx\, d\tau \le  \int_{Q_{n}^{-}} |(u-k_{n})_+\, \eta_{n}|^{l}\, dx\, d\tau
\]
and chaining Sobolev's embedding  \eqref{embedding} on the right and \eqref{ref} (notice that $\eta_{n}^{2}\le \eta_{n}$), we get 

\[
\begin{split} 
\big( \frac{a}{2^{n+1}}  \big)^l  |A_{n+1}|& \le C\,  \bigg( \sup_{t\in (- \rho_n^{2},0]} \int_{K_n}  (u_{t}-k_n)_+^{2}\, (\eta_{n})_{t} dx + \sum_{i=1}^N  \int_{Q_n^{-}} |\partial_i (\eta_{n}\, (u-k_n)_+)|^{p_i} \, dx\, d\tau    \bigg)^{\frac{N+\bar{p}}{N}} \\
& \le C 2^{c\, n}  \,     |A_n|^{1+\bar p/N}, 
\end{split}
\]
for some bigger $C$, $c$. By the fast convergence Lemma \cite[Lemma 5.1 chap. 2]{DBGV-mono},  if $|A_{0}|$ is sufficiently small (depending on $N, {\bf p}$ and  $a$), $|A_{n}|\to 0$ for $n\to \infty$, implying the claim.
\end{proof} \noindent 

\begin{remark}\label{DGc}
Applying the transformation $\T_{\rho, \theta}$ and recalling \eqref{tr}, we get a similar statement for any solution in $\Q_{\rho}(\theta)^{-}$. For example, if $u\ge 0$ solves \eqref{mod} in  $\Q_{\rho}^{-}(\theta)$, there exists $\mu_{1}=\mu_{1}( N, {\bf p})>0$ such that 
\[
|[u \le \theta ] \cap \Q_{\rho}^{-}(\theta)| \le \mu_{1}\, |\Q_{\rho}^{-}(\theta)|\quad \Rightarrow\quad  \inf_{\Q_{\rho/2}^{-}(\theta/2)}u\ge \theta/2.
\] 
\end{remark} 

\subsection{Comparison Principles}
We consider in this section the Cauchy problem for \eqref{mod}, namely
\begin{equation}
\label{cp}
\begin{cases}
\partial_{t}u=\sum_{i=1}^{N}\partial_{i}((\partial_{i} u)^{p_{i}-1})&\text{weakly in $\Omega_{T}$}\\
u_{t}\to u_{0}&\text{strongly in $L^{2}(\Omega)$}
\end{cases}
\end{equation}
and a similar one for the Fokker-Planck equation \eqref{FokkerPlanck}.
Given two solutions $u, v$ of this problem, we say that $u\ge v$ on the parabolic boundary of $\Omega_{T}$ if $(u-v)_{-}\in L^{{\bf p}}(0, T; W^{1, {\bf p}}_{0}(\Omega))$ and $u_{0}\ge v_{0}$.
From the monotonicity of the principal part in \eqref{mod} we have the following classical comparison principle.
\begin{proposition}(Local comparison principle) 
Let $\Omega$ be a bounded rectangular domain, $u, v$ be weak solutions of  \eqref{cp} in $\Omega_T$. If $u \ge v$ on the parabolic boundary  of $\Omega_T$, then $u \ge v$ in $\Omega_T$.
\end{proposition}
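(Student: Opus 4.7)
The plan is to subtract the weak formulations of the two solutions and test the resulting identity against $\varphi=-(u-v)_{-}$, which is an admissible test function precisely because the parabolic boundary hypothesis requires $(u-v)_{-}\in L^{\bf p}(0,T;W^{1,{\bf p}}_{0}(\Omega))$. The difficulty is that $(u-v)_{-}$ is only Lipschitz rather than smooth in the $u,v$ variables, and more importantly neither $\partial_{t}u$ nor $\partial_{t}v$ exists as an element of a nice function space, so one cannot simply read off a time derivative from the weak formulation. The standard remedy is to work with Steklov averages $u_{h}(x,t)=\frac{1}{h}\int_{t}^{t+h}u(x,\tau)\,d\tau$: the averaged identity takes the pointwise-in-$t$ form
\[
\int_{\Omega}\partial_{t}(u_{h}-v_{h})\,\varphi\,dx+\int_{\Omega}\sum_{i=1}^{N}\bigl[(\partial_{i}u)^{p_{i}-1}-(\partial_{i}v)^{p_{i}-1}\bigr]_{h}\,\partial_{i}\varphi\,dx=0,
\]
valid for a.e. $t$ and every admissible $\varphi$, so I can legitimately choose $\varphi=-(u_{h}-v_{h})_{-}(\cdot,t)$.

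With this choice, the chain rule identity
\[
-\partial_{t}(u_{h}-v_{h})\cdot (u_{h}-v_{h})_{-}=\tfrac{1}{2}\partial_{t}\bigl((u_{h}-v_{h})_{-}^{2}\bigr)
\]
turns the first term into a total time derivative. Integrating in $t$ from $t_{1}\in(0,T)$ to $t_{2}\in(0,T)$ and then passing to the limit $h\to 0^{+}$ using the strong $L^{{\bf p}}$-continuity of the spatial fluxes and $L^{2}$-continuity in time yields
\[
\tfrac{1}{2}\int_{\Omega}(u-v)_{-}^{2}(t_{2})\,dx-\tfrac{1}{2}\int_{\Omega}(u-v)_{-}^{2}(t_{1})\,dx=\int_{t_{1}}^{t_{2}}\!\!\int_{\Omega}\sum_{i=1}^{N}\bigl[(\partial_{i}u)^{p_{i}-1}-(\partial_{i}v)^{p_{i}-1}\bigr]\partial_{i}(u-v)_{-}\,dx\,d\tau.
\]

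Because $\partial_{i}(u-v)_{-}=-(\partial_{i}u-\partial_{i}v)\mathbf{1}_{[u<v]}$, the spatial integrand equals
\[
-\sum_{i=1}^{N}\bigl[(\partial_{i}u)^{p_{i}-1}-(\partial_{i}v)^{p_{i}-1}\bigr](\partial_{i}u-\partial_{i}v)\,\mathbf{1}_{[u<v]},
\]
and the strict monotonicity of $\xi\mapsto(\xi)^{p_{i}-1}=|\xi|^{p_{i}-2}\xi$ makes each summand nonpositive. Thus the right-hand side of the previous identity is $\le 0$, giving
\[
\int_{\Omega}(u-v)_{-}^{2}(t_{2})\,dx\le \int_{\Omega}(u-v)_{-}^{2}(t_{1})\,dx\qquad\text{for a.e. }0<t_{1}<t_{2}<T.
\]
Finally, the strong $L^{2}$-convergence $(u-v)(\cdot,t_{1})\to u_{0}-v_{0}$ as $t_{1}\to 0^{+}$, combined with $u_{0}\ge v_{0}$, forces $\int_{\Omega}(u-v)_{-}^{2}(t_{1})\,dx\to 0$, whence $(u-v)_{-}(\cdot,t_{2})\equiv 0$ for a.e. $t_{2}\in(0,T)$, i.e. $u\ge v$ in $\Omega_{T}$. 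The main technical obstacle will be the justification of the $h\to 0$ limit for the flux term together with the measurability of the sets $[u_{h}<v_{h}]$; this is handled by routine approximation arguments, as only the $L^{\bf p}$-integrability of $\partial_{i}(u-v)_{-}$ is required for the test function to be admissible.
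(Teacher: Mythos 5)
The paper itself states this proposition without proof, merely remarking that it follows ``from the monotonicity of the principal part,'' so there is no argument of the paper to compare against; your proposal supplies the standard proof, and it is essentially correct: test the difference of the equations with the negative part, Steklov-average to give meaning to the time derivative, obtain a total time derivative of $\frac12(u-v)_-^2$, use monotonicity of $\xi\mapsto|\xi|^{p_i-2}\xi$ to discard the flux term, and conclude from $u_0\ge v_0$.

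One point deserves more than the ``routine'' label you give it. For $\varphi=-(u_h-v_h)_-(\cdot,t)$ to be admissible it must lie in $W^{1,\mathbf p}_0(\Omega)$ for a.e.\ $t$, and this is not an automatic consequence of $L^{\mathbf p}$-integrability of $\partial_i(u-v)_-$, as your closing sentence suggests; the zero-trace condition is the substantive requirement. Since Steklov averaging does not commute with $s\mapsto s_-$, one cannot directly infer $(u_h-v_h)_-=((u-v)_-)_h$. What saves the argument is the inequality $0\le (u_h-v_h)_-\le((u-v)_-)_h$ together with $\partial_i(u_h-v_h)_-\in L^{p_i}(\Omega)$; the right-hand side is in $W^{1,\mathbf p}_0(\Omega)$ because $W^{1,\mathbf p}_0(\Omega)$ is a closed convex subset of $L^{\mathbf p}(0,T;W^{1,\mathbf p}_0(\Omega))$, and then order-preservation of the trace operator forces $(u_h-v_h)_-$ to have zero trace as well. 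Alternatively you can sidestep the issue by testing instead with $-((u-v)_-)_h$, which is manifestly in $W^{1,\mathbf p}_0(\Omega)$ and still differentiable in time, at the cost of a slightly more involved passage to the limit. Either way the proof closes; I would just make the admissibility step explicit rather than attributing it to generic approximation. The concern about measurability of $[u_h<v_h]$ is a non-issue, while the only genuine subtlety in the $h\to0$ limit is the behaviour of $\mathbf 1_{[u_h<v_h]}$ on $[u=v]$, which is harmless because $\partial_i(u-v)=0$ a.e.\ there.
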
 \noindent Next we provide a comparison principle for the class of $L^{{\bf p}}$-solutions, that will prove to be useful for next purposes. We sketch the proof inasmuch the r\^ole of greater integrability can be fully exploited. 
\begin{proposition} \label{com}
Let $u,v $ be two $L^{{\bf{p}}}$ solutions of \eqref{cp} in $S_T$, satisfying $u_{0} \ge v_{0}$ for  $u_{0}, v_{0} \in L^{2}(\R^{N})$. Then $u \ge v$ in $S_T$. 
\end{proposition}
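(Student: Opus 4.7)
The strategy is the classical monotonicity-based comparison argument for quasilinear parabolic equations, with the twist that on the whole space one cannot use a compactly supported test function for free. The hypothesis $u,v\in\bigcap_{i=1}^N L^{p_i}(S_T)$ is precisely what enables us to insert a spatial cutoff and then let its radius tend to infinity.

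First I would fix $R>1$ and a cutoff $\xi_R\in C^\infty_c(\R^N)$ with $\xi_R\equiv 1$ on $K_R$, $\operatorname{supp}\xi_R\subseteq K_{2R}$, $0\le\xi_R\le 1$ and $|\partial_i\xi_R|\le C/R$. Subtracting the weak formulations for $v$ and $u$ and testing, after a standard time regularisation by Steklov averages, with $\varphi=(v-u)_+\,\xi_R$ on $\R^N\times(0,t)$, one obtains the identity
\[
\tfrac12\int_{\R^N}(v-u)_+^2(x,t)\,\xi_R\,dx - \tfrac12\int_{\R^N}(v_0-u_0)_+^2\,\xi_R\,dx + \sum_{i=1}^N\int_0^t\!\!\int_{\R^N}\Delta_i\,\partial_i\varphi\,dx\,d\tau=0,
\]
where $\Delta_i:=(\partial_i v)^{p_i-1}-(\partial_i u)^{p_i-1}$. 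The initial term vanishes since $v_0\le u_0$. Writing $\partial_i\varphi=\xi_R\,\partial_i(v-u)_++(v-u)_+\,\partial_i\xi_R$ and invoking the elementary monotonicity $\Delta_i\,(\partial_i v-\partial_i u)\ge 0$, the gradient-on-gradient term is nonnegative and can be discarded.

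What remains is the boundary remainder $\mathrm{Rem}(R):=\sum_i\int_0^t\!\int_{\R^N}\Delta_i\,(v-u)_+\,\partial_i\xi_R\,dx\,d\tau$, which by H\"older's inequality with conjugate exponents $p_i/(p_i-1)$ and $p_i$ together with $|\partial_i\xi_R|\le C/R$ is bounded by
\[
\frac{C}{R}\sum_{i=1}^N\Bigl(\int_{S_T}\bigl(|\partial_i u|^{p_i}+|\partial_i v|^{p_i}\bigr)\,dx\,d\tau\Bigr)^{(p_i-1)/p_i}\Bigl(\int_{S_T}(v-u)_+^{p_i}\,dx\,d\tau\Bigr)^{1/p_i}.
\]
Both factors are finite by the $L^{\bf p}$-hypothesis on $u,v$ and on their weak derivatives, so $\mathrm{Rem}(R)=O(1/R)$. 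Monotone convergence as $R\to\infty$ on the left-hand side yields $\int_{\R^N}(v-u)_+^2(x,t)\,dx=0$ for a.e.\ $t\in(0,T)$, proving $v\le u$ in $S_T$.

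The only genuinely delicate point is the Steklov-regularisation step needed to legitimate $\varphi$ as a test function and to match the boundary contribution at $\tau=0$ with the $L^2$-attainment of the initial datum; once this is in place, the argument reduces to the one-line monotonicity trick. The role of the greater $L^{\bf p}$-integrability is entirely concentrated in the H\"older estimate of the remainder: without uniform decay at spatial infinity in the $L^{p_i}$ scale there would be no way to remove the cutoff and the whole scheme would collapse.
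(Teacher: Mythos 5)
Your proof follows essentially the same route as the paper's: subtract the two weak formulations, test with $(v-u)_+$ times a spatial cutoff, discard the nonnegative gradient-on-gradient term by monotonicity, estimate the cutoff remainder using the $L^{p_i}$ integrability, and let $R\to\infty$. The one point you gloss over is precisely the one the paper singles out as a preliminary step: the definition of $L^{\bf p}$ solution in this paper only requires $u\in\bigcap_i L^{p_i}(S_T)$, \emph{not} $\partial_i u\in L^{p_i}(S_T)$ globally. Your remainder estimate needs $\|\partial_i u\|_{L^{p_i}(S_T)}$ and $\|\partial_i v\|_{L^{p_i}(S_T)}$ to be finite, and you invoke this as if it were part of ``the $L^{\bf p}$-hypothesis on $u,v$ and on their weak derivatives,'' but it is not assumed. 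The paper derives it first from the energy estimate \eqref{energy-estimates} applied with a standard cutoff, yielding
\[
\sum_{i=1}^{N}\|\partial_{i}u\|_{L^{p_{i}}(S_{T})}\le C\Bigl(\|u_{0}\|_{2}^{2}+\sum_{i=1}^{N}\|u\|_{L^{p_{i}}(S_{T})}\Bigr),
\]
which is exactly the step you need to legitimate the H\"older bound on $\mathrm{Rem}(R)$. Once that is inserted, your argument is complete and matches the paper's (the only cosmetic difference being that you split the remainder by H\"older where the paper uses Young's inequality, which amounts to the same thing).
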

\begin{proof} First notice that if $u$ is an $L^{{\bf{p}}}$ solution of \eqref{cp} in $S_T$ with $u_0 \in L^2(\R^N)$, then $u \in L^{\bf{p}}(0,T;W^{1,\bf{p}}(\Omega))$. Indeed, by the energy estimate \eqref{energy-estimates} with a standard cut-off, we deduce that 
\[
\sum_{i=1}^{N}\|\partial_{i}u\|_{L^{p_{i}}(S_{T})}\le C\, \big(\|u_{0}\|_{2}^{2}+\sum_{i=1}^{N}\|u\|_{L^{p_{i}}(S_{T})}\big),
\]
and similarly for $v$.  Secondly, we test the equations for $u$ and $v$ with $ (v-u)_+ \zeta$, where $\zeta$ a cut-off function between the balls $B_R$ and $B_{2R}$, independent of time and such that $|\partial_i \zeta| \le C/R$, $0 \le \zeta \le 1$. Subtracting the resulting integral equalities and using $u_{0}\ge v_{0}$  we have, for every $t>0$,
\[
    \begin{split}
\int_{B_{R} \cap [v \ge u]} &\zeta \, (v-u)^2(x,t)\, dx + \sum_{i=1}^N \int_0^t \int_{B_{2R} \cap [v\ge u]}  \zeta\, \big( (\partial_i v)^{p_i-1}-(\partial_i u)^{p_i-1}\big) \, \partial_i  (v-u)\,  dx\, d\tau\\
& = -\sum_{i=1}^N \int_0^t \int_{B_{2R} \cap [v\ge u]}  (v-u)\,  \big( (\partial_i v)^{p_i-1}-(\partial_i u)^{p_i-1}\big) \, \partial_i  \zeta\,  dx\, d\tau \\
    &\le \sum_{i=1}^N \frac{C}{R} \int_0^t \int_{B_{2R} \cap [v \ge u]} ( |\partial_i v|^{p_i-1}\, |u| +|\partial_i v|^{p_i-1} \, |v|+|\partial_i u|^{p_i-1}\, |v| +|\partial_i u|^{p_i-1}\, |u|)\, dx\, d\tau  \\
    & \le \frac{C}{R}\sum_{i=1}^N \|\partial_i v\|^{p_i}_{L^{p_i}(S_T)}+\|v\|^{p_i}_{L^{p_i}(S_T)}+\|\partial_i u\|^{p_i}_{L^{p_i}(S_T)}+ \|u\|^{ p_i}_{L^{p_i}(S_T)}
\end{split}
\]
by Young's inequality. By the initial argument and the assumptions, the sum on the right is finite, while by   the monotonicity of the operator both terms on the left are non-negative. Hence  for any $t<T$ the left hand side vanishes for $R\to +\infty$, giving the claim.
\end{proof}
 \noindent As a corollary, we have the following comparison principle for solutions to the Fokker-Planck equation.
\begin{corollary} \label{comparison4life}
Let $v, w$ be $L^{\bf{p}}$-solutions of the Cauchy problem for the Fokker-Planck equation \eqref{FokkerPlanck} satisfying $w_{0}\ge v_{0}$ and $w_{0}, v_{0} \in L^{2}(\R^{N})$. Then $w\ge v$ in $S_T$.
\end{corollary}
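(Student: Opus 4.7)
The plan is to reduce the claim to Proposition \ref{com} via the transformation $\Psi$ introduced in \eqref{continuous-transformation}, which maps $L^{\bf p}$-solutions of the Fokker--Planck equation \eqref{FokkerPlanck} on $\R^N\times (0,T)$ to solutions of \eqref{mod} on $\R^N\times (1, e^T)$. Setting $\tilde v:=\Psi v$ and $\tilde w:=\Psi w$, one sees directly from the definition that $\tilde v(\cdot,1)=v_0$ and $\tilde w(\cdot,1)=w_0$, so the hypothesis $w_0\ge v_0$ immediately transfers to the initial traces of $\tilde v$ and $\tilde w$ at $t=1$.

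The next step is to verify that $\tilde v,\tilde w$ are $L^{\bf p}$-solutions of \eqref{mod} on the shifted stripe $\R^N\times (1,e^T)$. Performing the change of variables $y_i=t^{-\alpha_i}x_i$, $s=\log t$ in $\int |\partial_i\tilde v|^{p_i}$ and $\int |\tilde v|^{p_i}$, the Jacobian produces powers of $t$ with exponents depending only on $\alpha$, $\alpha_i$, $p_i$; these are uniformly bounded because the new time variable $t$ ranges over the compact interval $[1,e^T]$, so $v,\partial_i v\in L^{p_i}(S_T)$ propagates to $\tilde v,\partial_i\tilde v\in L^{p_i}(\R^N\times (1,e^T))$. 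By the same dilation argument, the strong $L^2$-continuity $v(\cdot,s)\to v_0$ as $s\downarrow 0$ yields $\tilde v(\cdot,t)\to v_0$ in $L^2(\R^N)$ as $t\downarrow 1$, with the analogous statement for $\tilde w$.

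At this point I invoke Proposition \ref{com} with the initial time $0$ replaced by $1$, observing that its proof uses only time translation invariance and the energy inequality on strips $(t_0,t)$, and therefore applies verbatim after a trivial shift. This yields $\tilde w\ge\tilde v$ in $\R^N\times (1,e^T)$, which upon applying the inverse transformation $\Phi=\Psi^{-1}$ delivers $w\ge v$ in $S_T$, as required.

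The only point requiring real care is the bookkeeping in the change of variables showing that $\Psi$ preserves $L^{\bf p}$-integrability on finite time strips; conceptually this is routine, but it is essential that the interval $[1,e^T]$ is bounded away from $0$ and $\infty$, so that all the $t^{\beta}$ factors produced by the Jacobian remain controlled and no tail contribution at $t\to 0^+$ or $t\to +\infty$ appears.
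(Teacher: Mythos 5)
Your proof is correct and follows exactly the same route as the paper's (one-line) proof: transfer the problem to the prototype equation via $\Psi$, check that the transformation is an isomorphism between $L^{\bf p}$-solutions which preserves the ordering of the initial data, apply Proposition~\ref{com} on the shifted time interval, and transform back. The only difference is that you spell out the Jacobian bookkeeping showing $L^{\bf p}$-integrability is preserved on the compact time interval $[1,e^T]$, which the paper leaves implicit.
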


\begin{proof}
It suffices to recall that the law \eqref{continuous-transformation} establishes an isomorphism between  $L^{{\bf p}}(S_{1,e^{T}})$-solutions of the prototype equation \eqref{mod}  and $L^{{\bf p}}(S_{T})$ solutions to the Fokker-Planck equation \eqref{FokkerPlanck} and the initial data coincide.
\end{proof}

\begin{remark} 
It is worth noting that, while an elliptic comparison principle holds true as well for the stationary counterpart of \eqref{mod}, this is no longer the case for the stationary counterpart of the Fokker-Planck equation \eqref{FokkerPlanck}. This can be seen considering (already in the isotropic case), the  Barenblatt solutions of the $p$-Laplacian equation, which solve the stationary Fokker-Planck equation and contradict the elliptic comparison principle for \eqref{FokkerPlanck}.
\end{remark}

\subsection{$L^{{\bf p}}$ solutions} \label{esistenza}
We next consider the Cauchy problem for \eqref{mod}, with bounded and a  compactly supported initial datum, attained in  $L^2$. This problem can be read as
\begin{equation} \label{EQT}
    \begin{cases}
    \partial_{t}u=\sum_{i=1}^N \partial_i \big( (\partial_i u)^{p_i-1}  \big) &\text{in $S_{T}$},\\
    u_{0}=g \in L^{2}(\R^N)&\text{supp}\, g \subset \bar{B}_{R_0}, \quad \quad g \in L^{\infty}(B_{R_0}).
    \end{cases} 
\end{equation}  \noindent
We show in this section that this problem has a unique $L^{{\bf p}}$-solution, by a standard approximation technique relying on the monotonicity of the operator.
\begin{proposition}
\label{puni}
Problem \eqref{EQT} has a unique $L^{{\bf p}}$-solution which takes $g$ as initial datum in $L^2$.
\end{proposition}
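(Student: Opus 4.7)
The plan is a standard monotone-operator approximation on exhausting bounded domains, followed by a compactness and Minty argument, with uniqueness following immediately from the $L^{\bf p}$-comparison principle of Proposition \ref{com}. For uniqueness: if $u_1, u_2$ are two $L^{\bf p}$-solutions of \eqref{EQT} with the same datum $g$, applying Proposition \ref{com} with the r\^oles of $u$ and $v$ interchanged forces $u_1 \equiv u_2$. Thus the whole work lies in constructing at least one such solution.

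For existence I would fix $R > R_0$ and first solve the Dirichlet problem on $B_R \times (0,T)$ with zero lateral boundary data and initial datum $g$. Because the operator $v \mapsto -\sum_i \partial_i \big( (\partial_i v)^{p_i-1} \big)$ is monotone, hemicontinuous and coercive on the anisotropic space $W^{1,{\bf p}}_o(B_R)$, a classical theorem of J.-L. Lions --- equivalently, a Galerkin scheme closed by Aubin-Lions compactness and Minty's trick --- produces a weak solution $u_R \in L^{\bf p}(0,T; W^{1,{\bf p}}_o(B_R)) \cap C([0,T]; L^2(B_R))$ which attains $g$ in $L^2$. I would then extend $u_R$ by zero to all of $S_T$.

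The next task is to derive uniform-in-$R$ bounds. Testing the equation with $u_R$ itself (justified via Steklov averaging) gives the a priori identity
\[
\tfrac{1}{2}\|u_R(t)\|_{L^2(\R^N)}^2 + \sum_{i=1}^N \int_0^t \int_{\R^N} |\partial_i u_R|^{p_i}\, dx\, d\tau = \tfrac{1}{2}\|g\|_2^2,
\]
while comparison with the constants $\pm \|g\|_\infty$ bounds $\|u_R\|_{L^\infty(S_T)}$ by $\|g\|_\infty$. To upgrade these to a global $L^{p_i}(S_T)$ bound, which is what $u_R$ needs in order to belong to the $L^{\bf p}$-class, I would invoke finite speed of propagation for the slow-diffusion anisotropic equation (see \cite{Mosconi}) to confine $\mathrm{supp}\, u_R(\cdot,t)$ to a fixed ball $B_{R_*}$ depending only on $R_0$, $\|g\|_\infty$, $T$, $N$ and ${\bf p}$, whereupon the $L^\infty$ estimate upgrades to a uniform $L^{p_i}(S_T)$ one.

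Finally I would extract along a subsequence weak limits $u_R \rightharpoonup u$ in $L^{\bf p}(0,T; W^{1,{\bf p}}(\R^N))$ and $(\partial_i u_R)^{p_i-1} \rightharpoonup \chi_i$ in $L^{p_i'}(S_T)$, together with strong $L^2_{\mathrm{loc}}$ convergence of $u_R$ from Aubin-Lions (once $\partial_t u_R$ is bounded in a suitable negative-order space through the equation). A Minty argument then identifies $\chi_i = (\partial_i u)^{p_i-1}$: pass to the limit in the monotonicity inequality
\[
\int_{S_T} \sum_{i=1}^N \big( (\partial_i u_R)^{p_i-1} - (\partial_i \varphi)^{p_i-1} \big) \big( \partial_i u_R - \partial_i \varphi \big) \, dx\, dt \ge 0,
\]
using the energy identity above to compute the limit of $\int_{S_T} \sum_i (\partial_i u_R)^{p_i-1}\partial_i u_R \, dx\, dt$ in terms of $\|u_R(T)\|_2^2$, then let $\varphi$ vary. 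Strong $L^2$-attainment of the initial datum passes to the limit via weak time continuity combined with the energy equality. The hard part, in my view, is precisely securing the uniform global $L^{p_i}(S_T)$ integrability of $u_R$: the energy estimate alone controls only derivatives, and on the unbounded cylinder $S_T$ this forces the argument to exploit the compact support and $L^\infty$ character of $g$ through propagation-of-support estimates.
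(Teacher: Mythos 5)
Your overall scheme coincides with the paper's: exhaust $\R^N$ by bounded domains, solve the Dirichlet problems by Lions' monotonicity theory, obtain the basic energy identity and the $L^\infty$ bound by comparison, pass to the limit via Aubin--Lions on compact subsets (once $\partial_t u_R$ is controlled in $L^{{\bf p}'}(0,T;W^{-1,{\bf p}'})$), and identify the weak limits of the fluxes by Minty's trick. Your reduction of uniqueness to Proposition \ref{com} is also in the paper's spirit: the paper's uniqueness argument is essentially a re-run of the comparison estimate with test function $v\,\zeta$, so nothing is lost there.

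The one place where you diverge, and where there is a real gap, is the step you yourself flag as the hard part: the uniform-in-$R$ bound on $\|u_R\|_{L^{p_i}(S_T)}$. You propose to obtain it by invoking the finite-speed-of-propagation result of \cite{Mosconi} for the approximating $u_R$ and then combining compact support with the $L^\infty$ bound. The difficulty is that \cite[Theorem 1.1]{Mosconi} is formulated for a solution of the Cauchy problem on $\R^N$, and the paper itself deploys that result only \emph{after} Proposition \ref{puni}, precisely so that uniqueness can identify the branch constructed there with $\S_t g$. Within the proof of Proposition \ref{puni} you do not yet have this solution, and the $u_R$ are solutions of \emph{Dirichlet} problems on $B_R$, not of the Cauchy problem. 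Applying a propagation-of-support theorem for the Cauchy problem to them is not directly licensed, and trying to re-derive it by comparison with explicit supersolutions would be a sizeable detour. The paper sidesteps all of this with a much lighter argument: test the Dirichlet problem on $B_n$ with the (Steklov-averaged) function $|v_n|^{p_j-2}v_n$. Since both resulting terms on the left are nonnegative, one reads off
\[
\int_{\R^N}\frac{|v_n(\cdot,t)|^{p_j}}{p_j}\,dx \;\le\; \int_{\R^N}\frac{|g|^{p_j}}{p_j}\,dx
\]
for every $j$ and every $t\in(0,T)$, giving the uniform $L^\infty(0,T;L^{p_j})$ bound at once, with no recourse to support propagation. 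You should replace your propagation-of-support step with this test-function computation; the rest of your argument then goes through as in the paper.
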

\begin{proof}
We let, for $n\ge {\rm diam} ({\rm supp}\, w_{0})$, $B_n=\{|x|< n\}$ and  consider the boundary value problems
\begin{equation} \label{vn}
    \begin{cases}
    v_n \in C(0,T; L^2(B_n)) \cap L^{{\bf p}}(0,T ; W^{1,{\bf p}}_0(B_n))\\
    \partial_{t}v_n - \sum_{i=1}^N \partial_i ((\partial_i v_n)^{p_i-1})=0,\quad \text{in} \quad B_n \times (0,T),\\
    v_n(\cdot,0)=g.
    \end{cases}
\end{equation} \noindent 
We regard them as defined in the whole $S_{T}$ by extending them to be zero on $|x|>n$. The problems \eqref{vn} can be uniquely solved by a monotonicity method (see for instance \cite[Example 1.7.1]{Lions}), and give solutions $v_n$ satisfying
\begin{equation} \label{L2}
\sup_{t \in [0,T]} \int_{\R^N} |v_n(x, t)|^2 \, dx + 2 \sum_{i=1}^N  \int_{S_T} | \partial_i v_n |^{p_i} \, dx\, dt = \|g\|^2_{2}, \quad \forall n \in \mathbb{N},
\end{equation} \noindent and thus $v_n \in L^{\infty}(0,T; L^2(\R^N))$ uniformly. Notice that by the local comparison principle in $\Omega_{T}$, $\|v_{n}\|_{\infty}\le \|g\|_{\infty}$ hence by dominated convergence $v_{n}(\cdot, t)\to g$ in $L^{2}(\R^{N})$ implies $v_{n}(\cdot, t)\to g$ in $L^{p_{i}}(\R^{N})$ as $t\to 0$, for $i=1, \dots, N$. In the weak formulation of \eqref{vn} we take (modulo a Steklov averaging process) the test function $(v_n)^{p_j-1}$, $j=1,  \dots, N$, obtaining $\forall t \in (0,T)$
\[
\int_{\R^N}\frac{|v_n|^{p_j}}{p_j}(x, t) \,dx+ (p_j-1)\sum_{i=1}^N  \int_{S_{t}} |\partial_i v_n|^{p_i} \,  |v_n|^{p_j-2} \, dx d\tau=\int_{\R^{N}}\frac{|g|^{p_j}}{p_j} \,dx.
\]
implying
\begin{equation}
\label{lpi}
 v_n \in \cap_{i=1}^N L^{\infty}(0,T; L^{p_i}(\R^N)), \quad \text{with a uniform bound}.
\end{equation}
This estimate, together with \eqref{L2}, provides an uniform bound for $v_n$ in 
\[
L^{{\bf p}}(0,T; W^{1,\bf{p}}(\R^N))\cap L^{\infty}(0, T; L^{2}(\R^{N})).
\]
This bound implies that a (not relabelled) subsequence $v_n$ converges weakly* to a function $v$ in these spaces. Moreover, the weak formulation of the equation implies that
 \[
 \partial_{t}v_{n}=\sum_{i=1}^{N}\partial_{i}((\partial_{i} v_n)^{p_{i}-1}),
 \]
 and for any $m\in N$ the right hand side is uniformly bounded in 
 \[
 \big( L^{\bf{p}}\big(0, T; W^{1, \bf{p}}_0(B_{m})\big) \big)' =:L^{{\bf p}'}(0, T; W^{-1, {\bf p}'}(B_{m}))
 \]
 by H\"older inequality. By Aubin-Lions theorem \cite[Chap. III Proposition 1.3]{Showalter}, applied to the triple
\[
W^{1, \bf{p}}_{0}(B_{m})\hookrightarrow L^{2}(B_{m})\to W^{-1, \bf{p}'}(B_{m}),
\]
we can select for each $m$ a subsequence $v_{n}$ that converges to a function $v$ in $L^{2}(0, T; L^{2}(B_{m}))$. A diagonal argument provides a subsequence (still not relabeled) converging in $L^{2}(0, T; L^{2}_{loc}(\R^N))$ to the weak* limit $v$ and such that 
\begin{enumerate}
\item 
$\displaystyle{\int_{\R^{N}} (v_{n})_{t}\, \varphi_{t}\, dx\to \int_{\R^{N}} v_{t}\, \varphi_{t}\, dx \qquad}$ for a.e. $t$ and all $\varphi\in C^{\infty}_{{\rm loc}}(0, T; C^{\infty}_{c}(\R^{N}))$,\vskip8pt
\item $ \partial_i(\partial_i v_n)^{p_i-1}  \rightharpoonup \eta_i\,$, weakly in $L^{ p_i}(S_T)$ for some $\eta_{i}$,\quad  $\forall i=1, \dots, N$.
\end{enumerate}
We can thus pass to the limit in the weak formulation of the equation, identifying $\eta_{i}=(\partial_{i} v)^{p_{i}-1}$ through Minty's trick. Semicontinuity and \eqref{lpi} imply that $v$ is an $L^{{\bf p}}$ solution.
\newline 
In order to prove uniqueness, let $u_1,u_2$ be two  $L^{{\bf p}}$-solutions of \eqref{EQT}. By the first step of the proof of Proposition \ref{com}, both belong to $L^{{\bf p}}(0, T; W^{1, {\bf p}}(\R^{N}))$, thus $v:=u_1-u_2$ satisfies
\[
    \begin{cases}
    v \in C(0,T ; L^2(\R^N)) \cap L^{{\bf{p}}}(0,T; W^{1,{\bf{p}}}(\R^N)),\\
    \partial_{t}v=\sum_{i=1}^N \partial_{i}\big((\partial_i u_1)^{p_i-1}   -(\partial_i u_2)^{p_i-1}\big), \quad \text{in} \quad S_{T},\\
    v_{0}=0.
    \end{cases}
\]
Test the latter with  $v\, \zeta$, where $\zeta\in C^{\infty}_{c}(B_{2R})$, $\zeta\ge 0$, $\zeta=1$ in $B_R$ and $|D \zeta | \le C/R$. For all $0<t \le T$ we have
\[
    \begin{split}
 \frac{1}{2} \int_{B_R} |v_{t}|^2 \,dx& + \int_0^t \int_{B_{2R}} \sum_{i=1}^N \big((\partial_i u_1)^{p_i-1}   -(\partial_i u_2)^{p_i-1}\big)\, ( \partial_i u_1-\partial_i u_2 \big)\,  \zeta \, dxd\tau\\
&=- \int_0^t \int_{B_{2R}}\sum_{i=1}^N v\, \big((\partial_i u_1)^{p_i-1}   -(\partial_i u_2)^{p_i-1}\big)\,  \partial_i  \zeta \, dxd\tau.
\end{split}
\]
Using  the monotonicity of the principal part on the left-hand side and H\"older's inequality on the right, for every $t\in (0, T)$
\[
\int_{B_R} |v_{t}|^2 \, dx \le\frac{C}{R} \sum_{i=1}^N \|v\|_{L^{p_i}(S_{T})}\, (\|\partial_i u_1\|_{L^{p_i}(S_{T})}+\|\partial_i u_2\|_{L^{p_i}(S_{T})} )  \rightarrow 0, \quad \text{as} \quad R \rightarrow \infty.
\]
\end{proof}

\section{ Barenblatt fundamental solutions}\label{bare}

In this section we will build a self similar solution to \eqref{mod}, i.e., by the discussion in section \ref{scaling}, a stationary solution to the Fokker-Planck equation \eqref{FokkerPlanck}. We will then study the positivity properties of such fundamental solution, which, together with the comparison principle, will be the main tool to expand the positivity set of non-negative solutions of \eqref{mod}.\newline 

By the results of section \ref{esistenza} we can define,  at least for bounded compactly supported initial data $g$, the operator 
\[
\S_{t} g :=u_{t},\qquad t\ge 1,
\]
where $u$ is the unique $L^{{\bf p}}$ solution of 
\begin{equation}
\label{EQ}
\begin{cases}
    \partial_{t}u=\sum_{i=1}^N \partial_i \big( (\partial_i u)^{p_i-1}  \big) &\text{in $S_{1, \infty}$},\\
    u_{1}=g.&
    \end{cases} 
\end{equation}
In terms of the Fokker-Planck equation, this also defines through \eqref{continuous-transformation} the operator
\begin{equation}
\label{stilde}
\tilde{\S}_s g:= (\Phi u)_s\qquad s\ge 0,
\end{equation}
giving the solution at the time $s \in \R_+$, of the problem 
 \begin{equation} 
 \label{FKP}
    \begin{cases}
    \partial_{s}w= \sum_{i=1}^{N}\partial_i [ (\partial_i w )^{p_i-1} -\alpha_i y_i w ] &\text{in $S_{\infty}$},\\
    w_{0}=g. 
    \end{cases} 
\end{equation} \noindent 
The relation \eqref{stilde} implies that
\begin{equation}
\label{hatT}
\tilde{\S}_{s}g=\T_{e^{s/\sigma}}\S_{e^{s}}g,
\end{equation}
where $\T$ is given in \eqref{trho}, allowing  to prove properties for $\tilde{\S}_s$ by proving them  for $\S_t$.

\subsection{Construction of a Barenblatt solution}

In order to state some basic properties of the operator $\tilde{\S}_{s}$ we will need the following space:
\begin{equation}
\label{X}
X_{R, M}=\{g\in L^{\infty}(\R^{N}):  0\le g\le M, \ {\rm supp}\, g \subseteq K_{R}\}, \qquad X=\bigcup_{R, M>0}X_{R,M}.
\end{equation}

\begin{lemma}\label{lemma31}
If  \eqref{param} holds true, the operator  $\tilde{\S}_{s}$, $s\ge 0$  defined in \eqref{stilde} has the following properties.
\begin{enumerate}
\item If  $g\in  L^{2}(\R^{N})$ and ${\rm supp}\, g\subseteq K_{R_{0}}$ then for some $c=c(N, {\bf p})$ it holds
\begin{equation}
\label{supp}
{\rm supp}\, \tilde{\S}_{s}g\subseteq \prod_{i=1}^{N} [-R_{i}(s), R_{i}(s)],\qquad R_{i}(s)=2\,e^{-s\alpha_{i}}R_{0}+c\,\|g\|_{1}^{ \bar{p}(p_i-2)/(p_i \sigma)}.
\end{equation}
\item If $g\in X$, then
$\|\tilde{\S}_{s}g\|_{1}=\|g\|_{1}$ and $0\le \tilde{\S}_{s}g\le \|g\|_{\infty}$. In particular $\tilde{\S}_{s}:X\to X$ for all $ s\ge 0$.
\item
For any $R, M>0$ and $s\ge 0$, $\tilde{\S}_{s}:X_{R, M}\to X$ is continuous when $X_{R, M}$ and $X$ are equipped with the weak-$L^{2}$ topology.
\end{enumerate}

\end{lemma}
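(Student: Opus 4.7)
The argument for each part reduces to the corresponding question for $\S_t$ acting on the prototype equation via the identity $\tilde{\S}_s g = \T_{e^{s/\sigma}}\S_{e^s}g$ from \eqref{hatT}, together with the fact that $\T_\rho$ is an isometry of $L^1(\R^N)$ and a bounded linear, hence weak-$L^2$ continuous, map on $L^2(\R^N)$.

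For (1), I would invoke the finite speed of propagation estimate for $L^{\mathbf{p}}$-solutions of \eqref{mod} proved in \cite{Mosconi}, which provides a rectangular support bound of the form $\mathrm{supp}\,\S_t g\subseteq \prod_i\{|x_i|\le 2R_0+c\,\|g\|_1^{\bar p(p_i-2)/(p_i\sigma)}\,t^{\alpha_i}\}$ for $t\ge 1$. Applying $\T_{e^{s/\sigma}}$, which shrinks the $y_i$-coordinate by the factor $e^{-s\alpha_i}$ as read off from \eqref{trho}, converts the spreading term $t^{\alpha_i}$ evaluated at $t=e^s$ into a time-independent constant, yielding exactly \eqref{supp}. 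For (2), mass conservation follows by testing the weak formulation of \eqref{mod} with a smooth cut-off equal to one on the compact support of $u_t=\S_tg$ supplied by (1); the spatial flux terms vanish away from this support and we obtain $\|u_t\|_1=\|g\|_1$, so $\|\tilde{\S}_sg\|_1=\|g\|_1$ by the $L^1$-isometry property of $\T_\rho$. The $L^\infty$ bound and nonnegativity are inherited from the approximation scheme in Proposition \ref{puni}: each $v_n$ in $B_n\times(0,T)$ lies between $0$ and $\|g\|_\infty$ on its parabolic boundary, so the local comparison principle gives $0\le v_n\le\|g\|_\infty$ inside $B_n$, and this passes to the limit and through $\T_\rho$.

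The substantive part is (3). Given $g_n\rightharpoonup g$ weakly in $L^2$ with $g_n\in X_{R,M}$, set $u_n=\S_\cdot g_n$ and fix $T>e^s$. By (1) and (2), $\{u_n\}$ is uniformly bounded in $L^\infty(S_{1,T})$ with uniformly compact spatial support depending only on $R$, $M$, and $T$; combined with the energy estimate \eqref{energy-estimates} applied with a cut-off larger than all supports, this yields a uniform bound on $u_n$ in $L^{\mathbf{p}}(1,T;W^{1,\mathbf{p}}(\R^N))\cap L^\infty(1,T;L^2(\R^N))$ and consequently on $\partial_t u_n$ in $L^{\mathbf{p}'}(1,T;W^{-1,\mathbf{p}'}(B))$ for any bounded box $B$ containing the supports. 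An Aubin-Lions argument, exactly as in Proposition \ref{puni}, extracts a subsequence converging strongly in $L^2(S_{1,T})$ to some $u^*$, with weakly convergent gradients; Minty's monotonicity trick identifies $(\partial_iu_n)^{p_i-1}\rightharpoonup(\partial_i u^*)^{p_i-1}$ weakly in $L^{p_i/(p_i-1)}$. Passing to the limit in the weak formulation of \eqref{EQ}, and using that the map $t\mapsto\int u_n(x,t)\varphi(x)\,dx$ is uniformly equicontinuous (from the bound on $\partial_t u_n$) together with $g_n\rightharpoonup g$ at $t=1$, shows that $u^*$ is the $L^{\mathbf{p}}$-solution of \eqref{EQ} with initial datum $g$; uniqueness from Proposition \ref{puni} forces $u^*=\S_\cdot g$ and upgrades subsequential convergence to full-sequence convergence. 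The same equicontinuity then gives $u_n(\cdot,t)\rightharpoonup u(\cdot,t)$ in $L^2(\R^N)$ for every $t\in[1,T]$, and in particular for $t=e^s$; applying the weak-$L^2$ continuous map $\T_{e^{s/\sigma}}$ yields $\tilde{\S}_sg_n\rightharpoonup\tilde{\S}_sg$, as required.

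The hard part is (3): extracting strong $L^2$ compactness for $u_n$ despite the unbounded spatial domain (resolved by the uniform support from (1)), passing to the limit in the nonlinearity $(\partial_iu_n)^{p_i-1}$ via Minty's trick, and correctly inheriting the initial datum in the limit given only the weak convergence $g_n\rightharpoonup g$.
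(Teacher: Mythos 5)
Your parts (1) and (2) match the paper's argument (in (1) the paper uses the support bound from \cite[Theorem~1.1]{Mosconi} with $(t-1)^{\alpha_i}$, but your variant yields the same result after applying $\T_{e^{s/\sigma}}$; in (2) the paper applies the local comparison principle directly to $u$ in a large compact box rather than to the approximants $v_n$, but the two routes are equivalent). Your structure for (3) is also the same as the paper's: uniform bounds from (1)--(2) plus the energy estimate, Aubin--Lions compactness, passage to the limit in the weak formulation, identification via Minty, and upgrading by uniqueness.

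However, in (3) there is a genuine gap at the step where you write ``Minty's monotonicity trick identifies $(\partial_iu_n)^{p_i-1}\rightharpoonup(\partial_i u^*)^{p_i-1}$.'' The parabolic Minty argument requires controlling $\limsup_n \int_1^T\!\!\int \sum_i |\partial_i u_n|^{p_i}$, and the only available handle is the energy balance $\int_1^T\!\!\int \sum_i |\partial_i u_n|^{p_i}=\tfrac12\|g_n\|_2^2-\tfrac12\|u_n(\cdot,T)\|_2^2$. Since $g_n\rightharpoonup g$ only \emph{weakly} in $L^2$, one only has $\liminf_n\|g_n\|_2\ge\|g\|_2$, which is the wrong direction: the required $\limsup$ bound does not follow, and Minty cannot be applied on $(1,T)$ as stated. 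You flag ``correctly inheriting the initial datum'' as a delicate point, but you treat it as a separate issue (establishing $u^*(\cdot,1)=g$ via equicontinuity), without noticing that the same lack of strong convergence at $t=1$ poisons the Minty step itself. The paper's fix is precisely to circumvent this: Aubin--Lions provides $u_n(\cdot,\tau)\to u^*(\cdot,\tau)$ strongly in $L^2$ for a.e.\ $\tau\in(1,T)$, so one runs Minty on $[\tau,T]$ where strong convergence of the restarted initial data is available, identifies $\eta_i=(\partial_i u^*)^{p_i-1}$ on $S_{\tau,T}$ for a.e.\ such $\tau$, and lets $\tau\downarrow 1$. Your proof should be amended to incorporate this restarting argument; without it, the identification of the nonlinear flux in the limit is not justified.
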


\begin{proof}
Consider the corresponding problem \eqref{EQ} and the therein defined operator $\S_{t}$. By \cite[Theorem 1.1]{Mosconi} (notice that the branch obtained there is an $L^{{\bf p}}$ solution and therefore coincides with $\S_{t}g$ by uniqueness) we know that if ${\rm supp}\, g\subseteq K_{R_{0}}$, then 
\begin{equation}
\label{csup}
\text{supp}\, \S_{t} g \subseteq \prod_{i=1}^{N}[-R_{i}(t), R_{i}(t)],  \qquad R_{i}(t) =2\, R_{0}+c\, (t-1)^{\alpha_{i}}\, \|g\|_{1}^{ \bar{p}(p_i-2)/(p_i \sigma)}.
\end{equation}
Letting $t=e^{s}$ and using  \eqref{hatT} we get the first assertion, since
\[
\text{supp}\, \tilde{\S}_{s} g\subseteq \prod_{i=1}^{N}[-\tilde{R}_{i}(s), \tilde{R}_{i}(s)],\qquad \tilde{R}_{i}(s)=e^{-s\alpha_{i}}R_{i}(e^{s})\le 2\, e^{-s\alpha_{i}}R_{0}+c\, \|g\|_{1}^{ \bar{p}(p_i-2)/(p_i \sigma)}.
\]

The second statement follows from its counterpart on the corresponding solution $u$ of \eqref{EQ}: to prove conservation of mass we take advantage of the compactness of the supports of $u$ dictated by \eqref{csup} and test \eqref{EQ} with $\varphi\in C^{\infty}_{c}(\R^{N})$ such that $\varphi\equiv 1$ on $\cup_{t<T}{\rm supp}\, u_{t}$, $T>0$ arbitrary. The point-wise bounds follow from the local comparison principle for \eqref{EQ}, again taking advantage of the compactness of the support and comparing $u$ with the solutions $v\equiv 0$ and $v\equiv \|g\|_{\infty}$, respectively.

It remains to prove the continuity of $\tilde{\S}_s : X_{R, M} \rightarrow X$ within the weak $L^{2}$ topologies  from departure to arrival, which by \eqref{hatT} is equivalent to prove the same statement for $\S_{t}$.
Fix $T>t\ge 1$ and let 
\[
\bar R=\max\big\{2\, R+C\, (T-1)^{\alpha_{i}}\, (|K_{R}|M)^{ \bar{p}(p_i-2)/(p_i \sigma)}:i=1, \dots, N\big\}.
\]
Assume $g_{n}\to g$ weakly in $L^2$ with $g_{n}\in X_{R, M}$ and let $u_{n}$ be the $L^{{\bf p}}$ solution of \eqref{EQ} with initial data $g_{n}$. Notice that thanks to \eqref{csup}, it holds ${\rm supp}\, (u_{n})_{\tau}\subseteq K_{\bar R}$ for every $\tau\in [0, T]$, $n\ge 1$. The boundedness of  $\|g_{n}\|_{2}$ and  standard  energy estimates then give a uniform bound for $u_{n}$ in $ L^{{\bf p}}(1,T; W^{1,{\bf p}}_0(K_{\bar R})) \cap L^\infty(1,T;L^2(\R^N))$ and for $\partial_{\tau}u_{n}$ in $L^{{\bf p}'}(0, T; W^{-1, {\bf p}'}(K_{\bar R}))$. Applying Aubin-Lions theorem as in  the proof of Proposition \ref{puni}, we can extract a subsequence converging weakly$^{*}$ to some $u$ in those spaces and such that  
\[
\text{$u_n(\cdot, \tau)\to u(\cdot, \tau)\qquad $ in $L^{2}(K_{\bar R})$, for a.\,e. $\tau \in [1, T]$.}
\]
We can pass to the limit in the weak form of the equation to get 
\[
 \int_{\R^{N}} u_{\tau}\, \varphi_{\tau}\, dx- \int_{\R^{N}} g\, \varphi_{1}\, dx-\int_{S_{1,\tau}}u\, \partial_{\tau}\varphi\, dx\, dt+\int_{S_{1, \tau}}\sum_{i=1}^{N}\eta_{i}\, \partial_{i}\varphi\, dx\, dt=0
 \]
for almost every $1<\tau<T$, so that it only remains to show that $\eta_{i}=(\partial_{i}u)^{p_{i}-1}$. We cannot directly employ Minty's trick, since we are missing the strong convergence of the initial data. However,  for any $\tau$ such that $(u_n)_{\tau}\to u_{\tau}$ in $L^{2}(K_{\bar R})$, we look at $\{u_{n}\}$ as a sequence of solution to \eqref{mod} on $[\tau, T]$ with strongly convergent initial data and now Minty's trick allows to deduce $\eta_{i}=(\partial_{i} u)^{p-1}$ on $S_{\tau, T}$. Since $\tau$ can be chosen arbitrarily close to $1$ we obtain that $u$ is a $L^{{\bf p}}$ solution to \eqref{EQ} with initial datum $g$ and from uniqueness we infer that $u_{t}=\S_{t}g$ for any $t\ge 1$. A standard sub-subsequence argument concludes the proof of the third statement. 
 \end{proof}

\begin{theorem}
Under assumption \eqref{param}, there exists a nontrivial stationary solution $w\in X_{1,1}$ to \eqref{FKP}, and therefore a Barenblatt Fundamental solution.
\end{theorem}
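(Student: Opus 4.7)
The plan is to apply Schauder's fixed point theorem to the semigroup $\tilde{\S}_{s}$ of Lemma~\ref{lemma31}, first producing a fixed point of the single map $\tilde{\S}_{s_{0}}$ for a well-chosen $s_{0}>0$, and then promoting it to a genuine stationary solution of \eqref{FokkerPlanck}. The natural convex candidate on which to perform the Schauder step is
\[
K=X_{R,M}\cap\bigl\{g\in L^{1}(\R^{N}):\|g\|_{1}=m\bigr\},
\]
with $R,M,m>0$ parameters to be tuned. Weak-$L^{2}$ compactness of $K$ follows from the uniform support and $L^{\infty}$-cap, which bound $K$ in $L^{2}$ and yield its weak-$L^{2}$ closedness, combined with the fact that the constraint $\int g=m$ is weakly-$L^{2}$ closed on functions supported in $K_{R}$, since $\mathbf{1}_{K_{R}}\in L^{2}$. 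The set $K$ is non-empty (take $g=(m/|K_{R}|)\mathbf{1}_{K_{R}}$ whenever $m\le M|K_{R}|$) and avoids the trivial zero element since $m>0$. The weak-$L^{2}$ continuity of $\tilde{\S}_{s_{0}}|_{K}$ is precisely part (3) of Lemma~\ref{lemma31}.

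The critical step is to engineer the invariance $\tilde{\S}_{s_{0}}(K)\subseteq K$. The mass condition $\|g\|_{1}=m$ and the pointwise cap $g\le M$ are preserved automatically by part (2) of Lemma~\ref{lemma31}, so only the support containment requires attention. Part (1) of the same lemma gives, for any $g\in K$,
\[
\mathrm{supp}\,\tilde{\S}_{s_{0}}g\subseteq\prod_{i=1}^{N}\bigl[-R_{i}(s_{0}),R_{i}(s_{0})\bigr],\qquad R_{i}(s_{0})=2e^{-s_{0}\alpha_{i}}R+c\,m^{\bar p(p_{i}-2)/(p_{i}\sigma)},
\]
so choosing first $m$ small enough that $c\,m^{\bar p(p_{i}-2)/(p_{i}\sigma)}\le R/4$ for every $i$ and then $s_{0}$ large enough that $2e^{-s_{0}\alpha_{i}}R\le R/4$, we get $R_{i}(s_{0})\le R/2$ and hence $\mathrm{supp}\,\tilde{\S}_{s_{0}}g\subseteq K_{R}$. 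The exponential decay of the first summand, which makes this tuning possible, rests decisively on the slow diffusion hypothesis $p_{i}>2$ from \eqref{param}, which via \eqref{alpha} yields $\alpha_{i}>0$. Schauder's theorem in the locally convex space $L^{2}_{\mathrm{weak}}$ then produces a nontrivial $\bar w\in K$ with $\tilde{\S}_{s_{0}}\bar w=\bar w$.

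The main obstacle, as I see it, is to promote $\bar w$ from a fixed point of the single time-map $\tilde{\S}_{s_{0}}$ (which by the analysis of subsection~\ref{scaling} only corresponds to self-similarity of $\Psi\bar w$ at the single scale $\rho_{0}=e^{s_{0}/\sigma}$) to a common fixed point of the whole semigroup $\{\tilde{\S}_{s}\}_{s\ge 0}$, i.e.\ a true stationary solution of \eqref{FokkerPlanck}. A viable route exploits the $s_{0}$-periodicity of the orbit $s\mapsto\tilde{\S}_{s}\bar w$ together with the comparison principle of Corollary~\ref{comparison4life}: the comparison applied to the shifted orbit forces $\|\tilde{\S}_{s}\bar w\|_{\infty}$ to be monotone nonincreasing in $s$, and periodicity then forces it to be constant, a rigidity which, combined with $L^{1}$-conservation, should yield stationarity. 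Once a nontrivial stationary $w$ with support in some $K_{R}$ and $L^{\infty}$-norm $M$ is at hand, the one-parameter family $\{\T_{1,\lambda}\Psi w\}_{\lambda>0}$ consists of self-similar solutions of \eqref{mod} of varying masses (the group law of Proposition~\ref{transformation-group} and the self-similarity of $\Psi w$ give $\T_{\rho,\rho^{-N}}\T_{1,\lambda}\Psi w=\T_{1,\lambda}\Psi w$ for every $\rho$); the corresponding stationary Fokker--Planck profiles $\Phi(\T_{1,\lambda}\Psi w)$ then form a one-parameter family, and for $\lambda$ sufficiently large the support shrinks into $K_{1}$ and the $L^{\infty}$ norm drops below $1$, giving the desired element of $X_{1,1}$.
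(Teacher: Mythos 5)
Your Schauder step is essentially the paper's: the paper works with the convex set $C_{\eps}=\{g\in L^2:\ \mathrm{supp}\,g\subset B_1,\ 0\le g\le 1,\ \|g\|_1=\eps\}\subseteq X_{1,1}$ and tunes $\bar\eps$ small and $\bar s$ large using exactly the three statements of Lemma~\ref{lemma31} (support containment from part (1), preservation of the $L^\infty$ cap and the mass from part (2), weak-$L^2$ continuity from part (3)), so this portion of your argument is correct and matches. Your closing rescaling step is also correct: one checks that $\Phi(\T_{1,\lambda}\Psi w)$ has $L^\infty$-norm $\lambda^{-\bar p/\sigma}\|w\|_\infty$ and support dilated in direction $i$ by $\lambda^{-\bar p(p_i-2)/(\sigma p_i)}$, and since $p_i>2$ both exponents have the right sign, so taking $\lambda$ large pushes the profile into $X_{1,1}$. (The paper avoids this step by working in $X_{1,1}$ from the outset.)

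The gap is exactly where you flag it, in the promotion from a fixed point of the single map $\tilde{\S}_{s_0}$ to a stationary point of the whole semigroup. Your sketch there does not work: knowing that $\|\tilde{\S}_s\bar w\|_\infty$ is nonincreasing (by part (2) of Lemma~\ref{lemma31} applied along the semigroup) and $s_0$-periodic forces the \emph{scalar} $\|\tilde{\S}_s\bar w\|_\infty$ to be constant in $s$, but for a degenerate equation there is no strong maximum principle turning this into the statement that the \emph{function} $\tilde{\S}_s\bar w$ is independent of $s$; adding $L^1$-conservation (which holds automatically along any orbit, by part (2)) adds no constraint. You correctly hedge with ``should yield stationarity'', and indeed the implication is unjustified. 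The paper's resolution is different and is the one genuinely new idea of the proof: extend the $s_0$-periodic orbit to an aeternal solution $\bar w_s$, $s\in\R$, and set
\[
g(y):=\sup_{\tau\in\R}\bar w(\tau,y),
\]
i.e.\ take the \emph{pointwise envelope} of the orbit, not the sup norm. Then $g\ge\bar w_\tau$ for every $\tau$, so the $L^{\bf p}$ comparison principle (Corollary~\ref{comparison4life}) yields $\tilde{\S}_s g\ge\bar w_{\tau+s}$ for all $\tau\in\R$ and $s\ge 0$; taking the supremum over $\tau$ (legitimate because the orbit is defined for all real times) gives $\tilde{\S}_s g\ge g$. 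Since $\tilde{\S}_s g\ge g\ge 0$ and $\|\tilde{\S}_s g\|_1=\|g\|_1$, the two nonnegative functions must coincide a.e., so $g$ is a stationary solution. It is this envelope construction, feeding the orbit into the comparison principle and then using mass conservation to collapse the inequality, that your proposal is missing.
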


 \begin{proof}
For $R_{0}, M_{0}> 0$ consider the convex set 
 \[
 C_{\eps}:= \big\{ g \in L^2(\R^N): \text{supp}\, g \subset B_{1},\, 0 \le g \le 1, \|g\|_{L^1(\R^N)}=\eps \big\}\subseteq X_{1,1}.
 \]
If $c$ is given in \eqref{supp}, for $\bar s$ sufficiently large and $\bar \eps$ sufficiently small it holds 
\[
2\, e^{-\bar s\alpha_{i}}+c\, {\bar\eps}^{ \bar{p}(p_i-2)/(p_i \sigma)}\le 1\qquad \forall i=1, \dots, N,
\]
 so that  \eqref{supp} implies that ${\rm supp}\, \tilde{\S}_{\bar s}g\subseteq B_{1}$ for all $g \in C_{\bar{\eps}}$. Using also point (2) of the previous lemma we have that $\tilde{\S}_{\bar s} C_{\bar \eps} \subseteq C_{\bar \eps}$. Moreover,  $C_{\bar \eps}$ with the weak $L^{2}$ topology  is compact, and by point (3) of the previous lemma, $\tilde{\S}_{\bar s}:C_{\bar \eps}\to C_{\bar \eps}$ is continuous, so that Schauder's theorem ensures the existence of a fixed point $\bar g\in C_{\bar \eps}$ for $\tilde{\S}_{\bar s}$. Therefore the function $\bar w_{s}=\tilde{S}_{s}\bar g$ is a times-periodic, bounded and compactly supported solution of  \eqref{FKP}, which can therefore be extended to $\R^{N+1}$ as an aeternal solution. Consider the  bounded, compactly supported function
\[
g(y)= \sup_{s\in \R} \bar w (s,y), \quad g \in X_{1,1}\, ,
\]
 for which $\|g\|_{1}\ge \bar{\eps}$. Then  $\tilde{\S}_{0}g=g\ge \bar w _{\tau}$ for every $\tau\in \R$, so that by the comparison principle \ref{comparison4life} it holds $\tilde{\S}_{s} g\ge \bar w_{\tau+s}$ for any $s\ge 0$. Taking the supremum in $\tau\in \R$ gives $\tilde{\S}_{s}g \ge g$, but since $\|\tilde{\S}_{s}g\|_{1}=\|g\|_{1}$, this implies $\tilde{\S}_{s}g= g$ for every $s\ge 0$, i.e., $g$ is a stationary solution of \eqref{FKP}.
\end{proof}

\subsection{Properties of the Barenblatt solutions}

Our next aim  is to prove that Barenblatt solution are positive in a quantitative way, i.e., their positivity set spreads in time in a way controlled by scaling.  This amounts in proving that stationary non-negative solutions of the Fokker Planck equation are bounded from below near the origin, which is the content of the next theorem. 

\begin{theorem}
Suppose \eqref{param} holds,  let $w\in X_{1,1}$  (see \eqref{X}) be a nontrivial stationary solution of the Fokker-Planck equation \eqref{FokkerPlanck} and $\B$ the corresponding  Barenblatt  solution of \eqref{mod}. Then there exists $ \bar\eta>0$, depending on $w$ and the data, such that
\[
\B(x, t)\ge \bar \eta\, t^{-\alpha} \qquad \text{if \ \ $|x_i| < \bar \eta\, t^{\alpha_i}$ for $i=1, \dots, N$}.
\]
\end{theorem}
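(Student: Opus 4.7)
The plan is to work with the stationary Fokker--Planck solution $w$ directly: by self-similarity, $\B(x,t) = t^{-\alpha}\, w(t^{-\alpha_i}\, x_i)$, so the claim is equivalent to the time-independent statement that $w(y)\ge \bar\eta$ whenever $|y_i|< \bar\eta$ for every $i=1,\dots,N$, i.e.\ that the origin lies in the positivity set of $w$ with a quantitative lower bound.

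First, I would extract an initial measure-theoretic positivity bump. Nontriviality of $w\in X_{1,1}$ (see \eqref{X}) yields $\|w\|_{1}>0$; since $0\le w\le 1$ and $\mathrm{supp}\, w\subseteq K_1$ with $|K_1|=1$, Chebyshev's inequality produces a level $c_0>0$, of the order of $\|w\|_1$, with $|\{w\ge c_0\}\cap K_1|\ge c_0$. Combining the energy inequality \eqref{energy-estimates} with $\|w\|_\infty\le 1$ supplies the BV-type control needed to invoke the Local Clustering Lemma \ref{localclustering-revisited}, which singles out a small sub-cube $K_{\eps}(y_0)\subset K_1$ where $\{w\ge c_0/2\}$ occupies nearly full density. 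The intrinsic De Giorgi Lemma \ref{DG} in its rescaled form (Remark \ref{DGc}), applied to $\B$ inside an intrinsic cylinder $\Q_{r_1}^{-}(c_0/2)$ centred near $(y_0,1)$, then upgrades this to a pointwise lower bound $\B\ge c_1$ on a smaller intrinsic sub-cylinder. Through the self-similar scaling this is equivalent to $w\ge c_1$ on a cube $K_{r_1}(y_1)$ possibly far from $0$.

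The crucial step is then to transport this, possibly off-centre, positivity toward the origin. Here I would combine the translation invariance of \eqref{mod} with a self-iterative comparison argument: letting $u$ be the $L^{\bf p}$-solution of \eqref{EQT} with compactly supported initial datum $c_1\, \chi_{K_{r_1}(y_1)}$ at time $t=1$, Proposition \ref{com} gives $u\le \B$ on $S_{1,\infty}$. The subsolution $u$ shares the essential structural features of $\B$---compact support expanding according to \eqref{supp}, conservation of $L^1$-mass, and $L^\infty$-decay of order $t^{-\alpha}$---so the measure-to-pointwise procedure of the previous paragraph can be iterated on $u$ itself. Combining these iterations with translation invariance (comparison of $u$ with shifted copies of itself), one chains together a sequence of positivity cubes whose centres can be forced to drift towards $0$; once one such cube covers a fixed neighbourhood of the origin, transferring the bound back to $\B$ through $u\le \B$ and rescaling by self-similarity yields $w\ge \bar \eta$ on a fixed anisotropic cube around $0$, with $\bar \eta$ depending only on $w$ and the data.

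The main obstacle is precisely this self-iterative transport. A naive parabolic Harnack chain is ruled out by the anisotropic intrinsic geometry (which is only quasi-metric in nature) and by the slow-diffusion feature of \eqref{mod}, which prevents instantaneous propagation of pointwise positivity. Forcing the iteration to actually contract the positivity cubes around $0$---rather than merely reproducing an off-centre bump at each step---requires combining the quantitative support-expansion \eqref{supp} with translation invariance in a scale-coherent way. This is where the ``self-iteration method based on comparison principles and translation invariance'' announced in the introduction does the essential work.
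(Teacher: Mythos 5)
Your high-level plan (find an initial positivity bump for $w$, then use comparison and translation invariance to propagate it towards the origin) matches the paper's strategy, and your first step is a plausible, if longer, route: the paper simply takes the essentially lower-semicontinuous representative from \cite[Corollary 4.3]{Mosconi} and picks a cube $K_{\delta_0}(x^{(0)})$ where $w>\eta_0$, rather than running Chebyshev, the energy inequality, Local Clustering and De Giorgi. That substitution is fine, though you would need to be careful that the energy inequality \eqref{energy-estimates} applies to $\B$ on a parabolic cylinder (it does not directly apply to the stationary $w$).

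The genuine gap is in the transport step, which you yourself flag as ``the main obstacle'' without resolving it. Saying that ``one chains together a sequence of positivity cubes whose centres can be forced to drift towards $0$'' is an assertion, not an argument; and the auxiliary solution $u$ you propose (with datum $c_1\chi_{K_{r_1}(y_1)}$) lacks the explicit self-similar structure that makes the drift controllable. The paper's mechanism is sharper and is worth spelling out. Knowing $\inf_{K_{\delta_0}(x^{(0)})}w>\eta_0$, one slides a \emph{rescaled and translated copy of $\B$ itself} under $\B$ at time $t=1$: precisely,
\[
\B_{\lambda}(x,t)=\lambda\,t^{-\alpha}\,w\bigl(\lambda^{(2-p_i)/p_i}\,t^{-\alpha_i}(x_i^{(0)}-x_i)\bigr),
\]
which by Proposition \ref{transformation-group} and translation invariance is a solution, and for $\lambda$ small (using $p_i>2$) has, at $t=1$, support in $K_{\delta_0}(x^{(0)})$ and sup below $\eta_0$. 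The comparison principle then gives $\B\ge\B_{\lambda}$ for $t\ge 1$, so $\B(\cdot,t)>\lambda\eta_0 t^{-\alpha}$ on a cube whose $i$-th coordinate centre is $x^{(0)}_i\bigl(1-t^{\alpha_i}\lambda^{(p_i-2)/p_i}\bigr)$. The decisive observation is that one may \emph{choose the next observation time $t_1$ to zero out exactly one coordinate of the new centre}, namely $t_1^{\alpha_1}=\lambda^{(2-p_1)/p_1}$, which forces $x^{(1)}_1=0$; moreover once a coordinate of the centre is zero it stays zero under further iterations. Repeating $N$ times eliminates all $N$ coordinates, landing a positivity cube centred at the origin, after which the self-similar law \eqref{law} converts this to the claimed bound on $w$. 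Without this coordinate-by-coordinate elimination, your iteration has no mechanism to ensure the positivity cubes actually reach the origin rather than indefinitely reproducing an off-centre bump; note also that the quantitative support estimate \eqref{supp} you invoke does not by itself locate the positivity set near $0$, which is what Corollary \ref{corB} and the special choice of $t_1$ accomplish.
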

\begin{proof}
Suppose that $\B$ is given by 
\begin{equation} \label{law}
\B(x,t)= t^{-\alpha}\,  w(x_i\,  t^{-\alpha_i}), \qquad t\ge 1.
\end{equation}
By  \cite[Corollary 4.3]{Mosconi}  we can fix a lower-semicontinuous representative of $\B$ and thus of $w$. Since $w>0$ somewhere, we can pick a point  $x^{(0)}$ and numbers $\delta_{0},\eta_{0}>0$ such that  
\begin{equation} \label{positivity1}
    \inf_{K_{\delta_{0}}(x^{(0)})}w(y)>\eta_{0}.
\end{equation} 
By \eqref{law}, the latter implies for any $t\ge 1$
\[
\B(x, t) \ge \eta_{0}\,  t^{-\alpha}, \quad \text{when} \quad \big\{ |x_i-x^{(0)}_{i}\, t^{\alpha_i}| <\frac{\delta_0}{2}\,  t^{\alpha_i} \big\}.
\]
Consider now 
\[
    \B_{\lambda}(x, t)= \lambda\,  t^{-\alpha}\,  w \big( \lambda^{(2-p_i)/p_i}\,  t^{-\alpha_{i}}\, (x_i^{(0)}-x_i)  \big),\\
\]
which solves \eqref{mod} by translation invariance and Proposition \ref{transformation-group}. Notice that, since $w\in X_{1,1}$
\[
\|\B_{\lambda}(\cdot, t)\|_{\infty}=\lambda\,  t^{-\alpha}\qquad {\rm supp}\, \B_{\lambda}(\cdot, t) \subseteq \big\{2\, |x_{i}^{(0)}-x_{i}|\le t^{\alpha_{i}}\, \lambda^{(p_i-2)/p_i}\big\}.
\]
We seek for  $\lambda>0$ such that the comparison principle can be applied between $\B_{\lambda}$ and $\B$ with starting time $t=1$. We need
\[
    \begin{cases}
   \| \B_{\lambda}(\cdot, 1)\|_{\infty}\le \eta_{0},\\
    \text{supp}\,  \B_{\lambda}(\cdot, 1)\subseteq K_{\delta_{0}}(x^{(0)}),
    \end{cases} \iff\quad  \begin{cases}
    \lambda \le \eta_{0},\\
   \lambda^{(p_i-2)/p_i} \le \delta_{0}/2 ,
    \end{cases}
\]
which, being $p_{i}>2$ for all $i$, can be solved for some $\lambda=\lambda_{1}\in (0, 1)$. Consequently, by comparison and \eqref{positivity1}, there holds 
\[
 \B(x,t) \ge \B_{\lambda_1}(x,t) > \lambda_1\, t^{-\alpha}\,  \eta_{0} , \qquad \text{for}\qquad \big| x^{(0)}_i - \lambda_1^{(2-p_i)/p_i}\,  t^{-\alpha_{i}}\, (x_i^{(0)}-x_i) \big| <\frac{\delta_{0}}{2}.
\] 
We let $t_1^{\alpha_1}= \lambda_1^{(2-p_1)/p_1}\ge 1$ and, consequently,
\[
 \eta_{1}=\lambda_1\, t_{1}^{-\alpha}\,  \eta_{0}, \quad x^{(1)}_{i}:=x^{(0)}_i \big(1-t_1^{\alpha_i}\,  \lambda_1^{(p_i-2)/p_i}\big),\quad \delta_1:= \frac{\delta_{0}}{2}  \min \big\{ t_1^{\alpha_i} \, \lambda_1^{(p_i-2)/p_i}:i=1, \dots, N\big\} 
\]
(notice that, by the choice of $t_{1}$, it holds $x^{(1)}_{1}=0$), to get 
\[
\inf_{K_{\delta_{1}}(x^{(1)})}\B(\cdot, t_1) \ge \eta_1
\]
Proceeding by induction, we will find sequences $t_{n}, \eta_{n}, \delta_{n}, x^{(n)}$ with the properties
\[
\inf_{K_{\delta_{n}}(x^{(n)})}\B(\cdot, t_n) \ge \eta_n,\qquad x^{(n)}_{i}=0\quad \text{for $i=1, \dots, n$}
\]
so that after $N$ steps $x^(N)=0$ and we find
\[
\inf_{K_{\delta_{N}}}\B(\cdot, t_N)\ge \eta_N.
\]
By \eqref{law}, this implies $w(x)\ge \eta_{N}\, t_{N}^{\alpha}$ when $|x_{i}|<t_N^{\alpha_{i}}\, \delta_{N}/2 $ for $i=1,\dots, N$. We set $\bar \eta=\min\{\eta_{N}, \delta_{N}/2\}$ and scale back to $\B$ through \eqref{law} again, to get the claim.
\end{proof}

We will from now suppose that $w$ is a fixed stationary solution in $X_{1,1}$ of \eqref{FKP}. For future purposes we summarise some properties derived from a scaling argument for a large family of corresponding Barenblatt solutions.

\begin{corollary}
\label{corB}
Let $\B(x,t)= t^{-\alpha}w(x_it^{-\alpha_i})$ be a fixed Barenblatt Fundamental solution to \eqref{mod} with $w\in X_{1,1}$. There exists $\bar \eta>0$ such that the family of Barenblatt solutions 
\[
\B_{\lambda}(x, t)= \T_{1,\lambda^{-\sigma/\bar p}} \B\, (x, t)=\lambda\,  t^{-\alpha}\, w(\lambda^{(2-p_{i})/p_{i}}\, x_{i} \, t^{-\alpha_{i}}),\qquad \lambda>0,
\]
has the following properties
\begin{enumerate}
\item $\displaystyle{\|B_{\lambda}(\cdot, t)\|_{\infty}=\lambda \,  t^{-\alpha}}$;
\item
$\displaystyle{{\rm supp}\, B_{\lambda}(\cdot, t)\subseteq \prod_{i=1}^N \big{\{} |x_i|\le    \lambda^{(p_i-2)/p_i}\, t^{\alpha_i} \big{\}}}$;
\item
$\displaystyle{\{B_{\lambda}(\cdot, t)\ge \lambda \, t^{-\alpha}\}\supseteq \prod_{i=1}^N \big{\{} |x_i|\le \bar\eta\, \lambda^{(p_{i}-2)/p_{i}}\, t^{\alpha_i} \big{\}}}$.
\end{enumerate}
\end{corollary}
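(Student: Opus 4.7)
The plan is to reduce everything to the previous theorem by exploiting the scaling invariance of \eqref{mod}. By Proposition~\ref{transformation-group}, $\B_\lambda:=\T_{1,\lambda^{-\sigma/\bar p}}\B$ is a (self-similar) solution of \eqref{mod} for every $\lambda>0$, so no PDE content is required beyond what is already established. All three claims will then follow by substituting an explicit form of $\B_\lambda$ into the known properties of the stationary profile $w$.

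First I would unfold the definition \eqref{transformation} of $\T_{1,\lambda^{-\sigma/\bar p}}$ applied to $\B(X,T)=T^{-\alpha}w(X_iT^{-\alpha_i})$. A bookkeeping computation, resting on the algebraic identity
\[
1+\alpha(2-\bar p)=\frac{\bar p}{\sigma}
\]
(a direct consequence of $\sigma=N(\bar p-2)+\bar p$ and $\alpha=N/\sigma$), together with the analogous identities for the exponents governed by $\alpha_i$ as defined in \eqref{alpha}, produces the explicit representation
\[
\B_\lambda(x,t)=\lambda\,t^{-\alpha}\,w\bigl(\lambda^{(2-p_i)/p_i}\,x_i\,t^{-\alpha_i}\bigr)
\]
already announced in the statement.

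With this formula in hand, property (1) is immediate from $0\le w\le 1$ (which is part of $w\in X_{1,1}$); property (2) follows from $\mathrm{supp}\,w\subseteq K_1=\{|y_i|\le 1/2\}$ after undoing the rescaling $y_i=\lambda^{(2-p_i)/p_i}x_it^{-\alpha_i}$; and property (3) follows from the previous theorem applied at $T=1$, namely $w(y)\ge\bar\eta$ whenever $|y_i|<\bar\eta$ for all $i$, by substituting the same rescaling (which gives the lower bound $\bar\eta\,\lambda\,t^{-\alpha}$ on the anisotropic cube $\{|x_i|<\bar\eta\,\lambda^{(p_i-2)/p_i}\,t^{\alpha_i}\}$, from which a harmless relabelling of $\bar\eta$ yields the stated form).

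The main (and essentially only) obstacle is the algebraic bookkeeping in the derivation of the explicit form of $\B_\lambda$: one must verify that the composition of $\B$ with $\T_{1,\lambda^{-\sigma/\bar p}}$ collapses to exactly a factor $\lambda$ in front of $t^{-\alpha}$ and exactly the exponents $(2-p_i)/p_i$ on $\lambda$ inside $w$. This is one numerical identity per exponent, entirely dictated by \eqref{alpha} and the definition of $T_{\rho,\theta}$, after which items (1)--(3) can be read off.
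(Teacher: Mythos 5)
Your computation is correct and follows the same (implicit) route the paper takes: the corollary carries no proof of its own there, being read off directly from the explicit formula for $\T_{1,\lambda^{-\sigma/\bar p}}\B$ and the positivity Theorem specialised to $t=1$; the key identity $1+\alpha(2-\bar p)=\bar p/\sigma$ and its analogue $\tfrac{p_i-\bar p}{p_i}-\alpha_i(2-\bar p)=\tfrac{\bar p(p_i-2)}{\sigma p_i}$ are exactly what is needed, and items (1)--(2) then follow from $0\le w\le 1$ and $\mathrm{supp}\,w\subseteq K_1$. One remark on item (3): what your substitution actually yields is $\B_\lambda(\cdot,t)\ge \bar\eta\,\lambda\,t^{-\alpha}$ on the stated anisotropic cube, not $\B_\lambda(\cdot,t)\ge \lambda\,t^{-\alpha}$ as the corollary literally prints; the latter cannot hold since by item (1) it would force $\B_\lambda$ to attain its supremum on a whole cube. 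The extra factor $\bar\eta$ cannot be absorbed by ``relabelling $\bar\eta$'' (that would also rescale the cube in item (3)); rather, the printed statement has a typo, and indeed in the proof of Theorem~\ref{Harnack-Inequality} the corollary is invoked precisely in the form $b_{\bar\lambda,\bar s}\ge\bar\lambda\,\bar\eta\,(t-\bar s)^{-\alpha}$, i.e.\ with the factor $\bar\eta$ present. So your derivation is the correct one; the final reconciliation step should be replaced by noting the typo.
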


\section{Proof of Theorem \ref{Harnack-Inequality}}\label{final}

 \noindent We first consider a generalisation  of what is called in literature the Krylov-Safonov argument. To this end, we make the following observations: for $\rho \in [0,1]$ the translates of the cylinders $\Q^{-}_{\rho}(\rho^{-N})$ arise naturally from the {\em quasi-metric}\footnote{This terminology is borrowed from Grafakos, but it appears there's no general consensus on the term ``quasi'': sometimes {\em pseudo-metric} is used instead.}
 \begin{equation}
 \label{metric}
 {\rm d}((x, t), (y, s))=\max\left\{ |2^{-1}(x_{i}-y_{i})|^{p_i/(\bar{p} + N(\bar{p}-p_i))}, |t-s|^{1/(\bar{p} + N(\bar{p}-2))}\right\}.
 \end{equation}
Indeed, all the exponents appearing in the previous definition are positive thanks to condition \eqref{param} on the spareness of $p_i$'s, therefore the {\em quasi-triangle inequality}
 \[
 {\rm d}(z_{1}, z_{3})\le \gamma\, ({\rm d}(z_{1}, z_{2})+{\rm d}(z_{2}, z_{3})),\qquad \forall z_{1}, z_{2}, z_{3}\in \R^{N+1},
 \]
 holds true for a constant $\gamma=\gamma(N, {\bf p})\ge 1$ which is the {\em quasi-metric constant}. Finally, notice that the  cylinder $\bar z+\Q^{-}_{\rho}(\rho^{-N})$ is the bottom half part of the ball $B_{\rho}(\bar z)$ with respect to this distance. 
\begin{lemma}\label{Krylov-Safonov}
Let $(X, {\rm d})$ be a quasi-metric space with quasi-metric constant $\gamma$  and $x_{0}\in X$. For any $\beta>0$ there exists a constant $\omega=\omega(\gamma, \beta)>1$ such that for any  bounded $u:B_{1}(x_{0})\to \R$ with $u(x_{0})\ge 1$ there exist $ x\in B_{1}(x_{0})$ and $r>0$ such that 
\begin{equation}\label{stime}
B_{r}(x)\subseteq B_{1}(x_{0}),\qquad     r^{\beta}  \sup_{B_{r}(x)} u\le \omega ,\qquad r^{\beta} u(x)\ge 1/\omega.
\end{equation}
\end{lemma}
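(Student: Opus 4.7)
The natural approach is a variant of the Krylov--Safonov ``maximum point'' trick, adapted to the quasi-metric setting. For $x\in B_{1}(x_{0})$ the plan is to introduce the \emph{inner radius}
\[
R(x):=\sup\{r>0:B_{r}(x)\subseteq B_{1}(x_{0})\}
\]
and the weighted quantity $\varphi(x):=R(x)^{\beta}\,u(x)$. Clearly $R(x_{0})=1$, so $\varphi(x_{0})=u(x_{0})\ge 1$. Since $u$ is bounded and $R\le 1$, the number $M:=\sup_{B_{1}(x_{0})}\varphi$ is finite and at least $1$, and one can pick a near-maximiser $\bar x\in B_{1}(x_{0})$ with $\varphi(\bar x)\ge M/2\ge 1/2$.

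The one genuine use of the quasi-metric constant $\gamma$ is the following boundary-comparison estimate, which I would prove next: for every $y$ with $d(y,\bar x)<R(\bar x)/(2\gamma)$ one has $R(y)\ge R(\bar x)/(2\gamma)$. Indeed, if $d(z,y)<R(\bar x)/(2\gamma)$, the quasi-triangle inequality gives
\[
d(z,\bar x)\le \gamma\bigl(d(z,y)+d(y,\bar x)\bigr)<R(\bar x),
\]
so $z\in B_{R(\bar x)}(\bar x)\subseteq B_{1}(x_{0})$. Combining this with the near-maximality $\varphi(y)\le M\le 2\varphi(\bar x)=2R(\bar x)^{\beta}u(\bar x)$ and dividing by $R(y)^{\beta}\ge (R(\bar x)/(2\gamma))^{\beta}$ yields the crucial sup estimate
\[
\sup_{B_{R(\bar x)/(2\gamma)}(\bar x)}u\le 2(2\gamma)^{\beta}\,u(\bar x).
\]

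At this point I would set $\omega:=2(2\gamma)^{\beta}$ and $\bar r:=(\omega\, u(\bar x))^{-1/\beta}$, so that $\bar r^{\beta}u(\bar x)=1/\omega$ is built in. The inequality $\varphi(\bar x)\ge 1/2$, together with the choice of $\omega$, translates directly into $\bar r\le R(\bar x)/(2\gamma)$, which both guarantees $B_{\bar r}(\bar x)\subseteq B_{1}(x_{0})$ and allows us to apply the previous sup bound inside $B_{\bar r}(\bar x)$:
\[
\bar r^{\beta}\sup_{B_{\bar r}(\bar x)}u\le \bar r^{\beta}\cdot 2(2\gamma)^{\beta}u(\bar x)=\frac{2(2\gamma)^{\beta}}{\omega}=1\le\omega.
\]
All three properties in \eqref{stime} are thus verified with the explicit constant $\omega=2(2\gamma)^{\beta}$.

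The only nontrivial step is the boundary-comparison estimate for $R$: in a genuine metric space ($\gamma=1$) it is automatic with constant $1/2$ in place of $1/(2\gamma)$, and the factor $\gamma$ produced there is exactly what propagates into the dependence $\omega\sim\gamma^{\beta}$. Observe also that the argument never uses continuity of $u$, which matches the hypotheses: only boundedness is needed, and the fact that $M$ is attained only up to a factor $2$ is precisely what turns $(2\gamma)^{\beta}$ into $2(2\gamma)^{\beta}$ in the final choice of $\omega$.
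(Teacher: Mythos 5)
Your argument is correct in substance and takes a genuinely different route from the paper's. The paper argues by contradiction: it sets $r_0=1/(2\gamma)$ and iteratively produces points $x_n\in B_{r_{n-1}}(x_{n-1})$ with $r_{n-1}^{\beta}u(x_n)\ge\omega$ and $r_n=r_{n-1}\omega^{-2/\beta}$, so that $u(x_n)\to\infty$; a geometric-series estimate in the quasi-metric keeps the whole chain inside $B_1(x_0)$, and unboundedness of $u$ gives the contradiction. Yours is the direct ``near-maximum-point'' version of the Krylov--Safonov trick: you weight $u$ by a power of the inner radius $R(x)$ and read the pair $(x,r)$ off a near-maximiser of $\varphi=R^{\beta}u$. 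Both routes yield $\omega$ of the same order $(2\gamma)^{\beta}$, but yours is non-iterative and produces the explicit constant $\omega=2(2\gamma)^{\beta}$, which is a modest gain in transparency.

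There is, however, one genuine (if easily repaired) gap: you assert ``$R\le 1$'' without justification, and in a general quasi-metric space this is false. If, for example, $B_1(x_0)=X$, then every ball is trivially contained in $B_1(x_0)$, so $R\equiv+\infty$, $M=\sup\varphi$ is infinite, and no near-maximiser exists. The fix is to redefine
\[
R(x):=\min\bigl\{1,\ \sup\{r>0:B_r(x)\subseteq B_1(x_0)\}\bigr\}.
\]
With the cap, $R\le 1$ holds by construction and $R(x_0)=1$ still holds, so $\varphi(x_0)=u(x_0)\ge 1$ and $M$ is finite. The boundary-comparison estimate $R(y)\ge R(\bar x)/(2\gamma)$ for $d(y,\bar x)<R(\bar x)/(2\gamma)$ is also unaffected, because the right-hand side is always at most $1/(2\gamma)\le 1$, so the cap never interferes with it. With this amendment the rest of your argument is sound.
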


\begin{proof} 
Extend $u$ as 0 outside $B_{1}(x_{0})$ and suppose that the claim is false. For $\omega$ a parameter to be determined depending only on $\beta$ and $\gamma$, we will construct  a sequence of points contradicting the boundedness of $u$. Set $r_{0}=1/(2\gamma)$ and choose $\omega>(2\gamma)^{\beta}$. Since $r_{0}^{\beta}u(x_{0})\ge 1/\omega$, it must hold
\[
r_{0}^{\beta}  \sup_{B_{r_{0}}(x_{0})} u> \omega.
\]
Choose $x_{1}\in B_{r_{0}}(x_{0})$ such that $r_{0}^{\beta}\, u(x_{1})\ge \omega$ and set $r_{1}=r_{0}\, \omega^{-2/\beta}$, so that
\[
r_{1}^{\beta}\,  u(x_{1})\ge 1/\omega.
\]
If $B_{r_{1}}(x_{1})\subseteq B_{1}(x_{0})$, we can similarly construct $x_{2}\in B_{r_{1}}(x_{1})$ such that
\[
r_{2}^{\beta}\,  u(x_{2})\ge 1/\omega,\qquad r_{2}=r_{1}\, \omega^{-2/\beta}.
\]
Proceed by induction to get a sequence of points and radii such that, if $B_{r_{n}}(x_{n})\subseteq B_{1}(x_{0})$,
\[
 r_{n}^{\beta}\, u(x_{n})\ge 1/\omega,\qquad r_{n}=r_{n-1}\, \omega^{-2/\beta}.
\]
As $\omega>1$, the first condition contradicts the boundedness of $u$ if all the balls $B_{r_{n}}(x_{n})$ are contained in $B_{1}(x_{0})$. This can be achieved if for any $n\ge 0$
\[
{\rm d}(x_{0}, x_{n})\le\gamma \sum_{i=0}^{n-1}\gamma^{i}\, {\rm d}(x_{i}, x_{i+1})\le \gamma r_{0} \sum_{i=0}^{+\infty}\gamma^{i}\,  \omega^{-2i/\beta }<1,
\]
which holds for $\gamma \, \omega^{-2/\beta}<1/2$.
\end{proof} 
\begin{lemma} \label{clusteringpotente}
Let $u\ge 0$ solve \eqref{mod} in $Q_{1}^{-}$, and suppose that for some $\bar\nu\in (0, 1)$ $a>0$ it holds
\begin{equation}
\label{condizionepotente}
    |[u>a] \cap Q_{1}^{-}| > (1-\bar\nu)\,  | Q_{1}^{-}|. 
\end{equation}
Then for every choice of $\lambda,\nu \in (0,1)$ there exist  $ \bar y\in K_{1}$, $\bar t\in (-1, -\bar{\nu}/4]$ and $\e \in (0,1)$ determined only by means of $N,{\bf p},\nu, \bar\nu, a$ and $\lambda$,  such that $\bar y+ K_{\e } \subset K_{1}$ and
\begin{equation} \label{clusterino}
    |[u_{\bar{t}} > \lambda\,  a] \cap  (\bar y+K_{\e})| > (1-\nu)\,  |K_{\e}|.
\end{equation}
\end{lemma}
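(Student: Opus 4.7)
The plan is to reduce the statement to a single-time-slice application of the Local Clustering Lemma~\ref{localclustering-revisited}. The key task is selecting a time $\bar t\in(-1,-\bar\nu/4]$ at which simultaneously (i) the superlevel set $[u_{\bar t}>a]$ has a definite density in $K_1$, and (ii) $\int_{K_1}|D(u_{\bar t}-a)_-|\,dx$ is controlled by a constant depending only on $\bar\nu,a,N,{\bf p}$. Once both conditions hold, rescaling $v:=u_{\bar t}/a$ places us exactly in the hypotheses of Lemma~\ref{localclustering-revisited} and the conclusion~\eqref{clusterino} will follow directly.

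For (i) I would rewrite the hypothesis as $\int_{-1}^0|[u_t\le a]\cap K_1|\,dt<\bar\nu$ and apply Chebyshev's inequality in time, restricted to the interval $(-1,-\bar\nu/4]$ of length $1-\bar\nu/4$: the set of bad times on which $|[u_t\le a]\cap K_1|>1-\bar\alpha$ has length at most $\bar\nu/(1-\bar\alpha)$, so choosing $\bar\alpha=\bar\alpha(\bar\nu)\in(0,1)$ small enough carves out a set $E\subseteq(-1,-\bar\nu/4]$ of good times of positive Lebesgue measure, with $|[u_t>a]\cap K_1|\ge\bar\alpha$ for every $t\in E$.

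For (ii) I would feed $k=a$ into the energy inequality~\eqref{energy-estimates} applied to $(u-a)_-$ on $Q_1^-$ with a standard product cut-off $\eta$ vanishing on the parabolic boundary and identically one on a slightly smaller cylinder. Since $(u-a)_-\le a$ pointwise, the right-hand side of~\eqref{energy-estimates} is bounded by a constant depending only on $a,N,{\bf p}$, yielding a time-integrated $L^{p_i}$-bound $\sum_i\iint|\partial_i(u-a)_-|^{p_i}\,dx\,dt\le C$. A second application of Chebyshev in time, now restricted to $E$, produces $\bar t\in E$ at which $\sum_i\int|\partial_i(u_{\bar t}-a)_-|^{p_i}\,dx$ is bounded; H\"older's inequality converts this into the $L^1$ gradient bound $\int|D(v-1)_-|\,dx\le\bar C$ required by Lemma~\ref{localclustering-revisited}, with $\bar C=\bar C(\bar\nu,a,N,{\bf p})$.

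Applying Lemma~\ref{localclustering-revisited} to $v$ at the level $\lambda':=(1+\lambda)/2\in(\lambda,1)$ and smallness $\nu$ then produces $\bar y$ and $\e=\e(\lambda,\nu,\bar\nu,a,N,{\bf p})\in(0,1)$ such that $\bar y+K_\e\subseteq K_1$ and $|[v\ge\lambda']\cap(\bar y+K_\e)|>(1-\nu)|K_\e|$; since $\{v\ge\lambda'\}\subseteq\{v>\lambda\}$, this is precisely~\eqref{clusterino}. The main obstacle I anticipate is coordinating the two Chebyshev arguments so that a single $\bar t\in(-1,-\bar\nu/4]$ satisfies both (i) and (ii) together, and choosing the support of the cut-off $\eta$ so that the region on which the gradient bound lives is still large enough to inherit a positive density from $|[u_{\bar t}>a]\cap K_1|\ge\bar\alpha$ without collapsing $\bar\alpha$ to zero; tracking how these constants degenerate as $\bar\nu\uparrow 1$ is the delicate quantitative point.
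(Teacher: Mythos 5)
Your approach is essentially the paper's own proof: bound $\sum_i\iint_{Q_1^-}|\partial_i(u-a)_-|^{p_i}$ via the energy inequality \eqref{energy-estimates} using a product cut-off; select a single time slice $\bar t\in(-1,-\bar\nu/4]$ at which both a gradient estimate and a positive density of $[u_{\bar t}>a]$ hold simultaneously; convert to an $L^1$-gradient bound by H\"older; and feed $u_{\bar t}/a$ into Lemma~\ref{localclustering-revisited}. (The paper applies the Local Clustering Lemma at level $\lambda$ directly instead of your $\lambda'=(1+\lambda)/2$; that difference is immaterial.) The only step the paper does not spell out is the simultaneous time selection, which it defers to ``the same argument of [Lemma 9.1]'' in \cite{DGV-acta}; you unpack it as a double Chebyshev argument, which is indeed what that reference does.

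Be warned, however, that the concern you flag at the end is not merely a matter of ``tracking how the constants degenerate'': the naive double Chebyshev as you have set it up actually breaks for $\bar\nu$ near $1$. Restricted to $(-1,-\bar\nu/4]$, which has length $1-\bar\nu/4$, the bad set $\{t:|[u_t\le a]\cap K_1|>1-\bar\alpha\}$ only satisfies $|{\rm bad}|<\bar\nu/(1-\bar\alpha)$, and for every choice of $\bar\alpha\in(0,1)$ this exceeds $1-\bar\nu/4$ once $\bar\nu\ge 4/5$, so you cannot conclude the good set is nonempty. This is precisely the regime that matters, since in the application (Lemma~\ref{proposizione-potente}) one takes $\bar\nu=1-\mu_a$ with $\mu_a$ the De~Giorgi threshold, typically very small. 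So the ``delicate quantitative point'' you anticipate is in fact a gap in the argument as written, and filling it requires the finer bookkeeping of the cited lemma --- note in particular that the paper first shrinks the density hypothesis to a slightly smaller cylinder $Q_r^-$ with $r=r(\bar\nu)$ chosen near $1$, which is part of how the selection is made to work. You should consult and adapt that argument rather than rely on the simple two-Chebyshev scheme; the remaining steps of your outline are correct.
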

\begin{proof}
Choose $r=r(\bar\nu)>1/2$ sufficiently near $1$ so that $|[u>a]\cap Q_{r}|>|Q_{r}|\, (1-\bar\nu)/2$. We write down the energy estimates \eqref{energy-estimates} for $(u-a)_-$ with $\eta$ of the form prescribed therein, $\eta\ge 0$, $\eta=1$  on $Q_{r}^{-}$, $\eta=0$ outside $Q_{1}^{-}$ and $|\partial_t \eta|+|\partial_{i} \eta_i | \le C(\bar\nu)$, to get, thanks to $\sup_{Q_{1}^{-}}(u-a)_{-}\le a$,
\[
    \sum_{i=1}^N \int_{Q_{r}^- }  |\partial_i (u-a)_{-}|^{p_i} dx\, dt \le C( \bar\nu, a). 
\]
By  the same argument  of \cite[Lemma 9.1]{DGV-acta}, there exists a time level $\bar{t} \in (-1,-\bar{\nu}/4]$ such that
\[
    \sum_{i=1}^N \int_{K_r} \bigg|\partial_i  \bigg(\frac{u_{\bar t}}{a}-1\bigg)_{-}\bigg|^{p_i} dx \le C(\bar \nu, a)\, r^{N-1} ,  \qquad \bigg| \bigg[\frac{u_{\bar t}}{a}>1 \bigg] \cap K_r \bigg| \ge  (1-\bar\nu)\, |K_{r}|/4.
\]
By H\"older's inequality, $u_{\bar t}/a$  fullfills the assumptions of Lemma \ref{localclustering-revisited} in $K_{r}$, giving the claim.
\end{proof}
\noindent It is worth underlining that the parameter $\epsilon$ in the previous statement can be made arbitrarily small by eventually changing the point. We further observe that it is possible to carry the information of Lemma \ref{clusteringpotente} into an equivalent formulation in the anisotropic cubes $\Q_{\rho}(\rho^{-N})$ by using \eqref{trho}.

\noindent In the next Lemma, we suppose that an essentially upper semicontinuous representative for the solution has been chosen,  through \cite[Corollary 4.3]{Mosconi}.

\begin{lemma} \label{proposizione-potente}
Let $u\ge 0$ be a bounded solution of \eqref{mod} in $Q_1^{-}$. There exist $C, \varepsilon>0$ depending on $N$ and ${\bf p}$ such that if $u(0,0)\ge C$, 
\begin{equation}\label{infestimate}
    \inf_{\bar x+\K_{\rho}(\varepsilon \rho^{-N})}u_{\bar t} \ge \varepsilon\, \rho^{-N} \qquad \text{for some $(\bar{x}, \bar{t})\in Q_{1}^{-}$ and $\rho>0$ with $ \bar{x}+\K_{\rho}(\varepsilon\,  \rho^{-N})\subseteq K_{1}$}.
\end{equation}
\end{lemma}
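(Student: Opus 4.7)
The plan combines a Krylov--Safonov-type reduction in the quasi-metric \eqref{metric} with a De Giorgi--clustering chain that upgrades a pointwise lower bound into a measure estimate and eventually into the desired pointwise infimum. I would first fix an essentially upper semicontinuous representative of $u$, extend it by zero outside $Q_1^-$, and apply Lemma \ref{Krylov-Safonov} with base point $(0,0)$ and exponent $\beta=N$. Choosing $C$ larger than the constant $\omega=\omega(N,{\bf p})$ provided by that lemma, one obtains $\bar z\in Q_1^-$ and $r\in(0,1)$ with $B_r(\bar z)\subseteq B_1((0,0))$, $\sup_{B_r(\bar z)} u\le \omega\,r^{-N}$ and $u(\bar z)\ge r^{-N}/\omega$. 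Since $\bar z+\Q_r^-(r^{-N})$ is the lower half of $B_r(\bar z)$, Proposition \ref{transformation-group} tells us that $v(y,s) := r^N u\bigl(\bar z+T_{r,r^{-N}}(y,s)\bigr)$ is a weak solution of \eqref{mod} on $Q_1^-$ with $0\le v\le \omega$ and $v(0,0)\ge 1/\omega$.

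\textbf{Upgrading to a measure estimate.}
I would then apply the intrinsic analogue of the upper De Giorgi Lemma \ref{DG} (obtained from the non-intrinsic version by the transformation $\T_{\eta,\omega}$ exactly as in Remark \ref{DGc}) on an intrinsic cylinder $\Q_\eta^-(\omega)\subseteq Q_1^-$, with $\eta=\eta(N,{\bf p},\omega)>0$ chosen so that the inclusion holds. For $a=1/(3\omega^2)$, if $|[v\ge a\omega]\cap \Q_\eta^-(\omega)|\le \mu_a\,|\Q_\eta^-(\omega)|$ held, the intrinsic De Giorgi would force $\sup_{\Q_{\eta/2}^-(\omega/2)} v\le 3a\omega/2 = 1/(2\omega)$, contradicting $v(0,0)\ge 1/\omega$ since $(0,0)\in\Q_{\eta/2}^-(\omega/2)$. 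Hence
\[
|[v>1/(3\omega)]\cap\Q_\eta^-(\omega)|>\mu_0\,|\Q_\eta^-(\omega)|
\]
for some $\mu_0=\mu_0(N,{\bf p})>0$.

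\textbf{Clustering, time-propagation, intrinsic De Giorgi.}
After rescaling $\Q_\eta^-(\omega)$ back to $Q_1^-$ via $\T_{\eta,\omega}$ (which renormalises values by $1/\omega$), the preceding estimate becomes a lower-bound measure estimate for the new solution $\hat v$ at a fixed level $\theta_0\sim \omega^{-2}$ on $Q_1^-$. The clustering Lemma \ref{clusteringpotente}, applied with $\lambda=1/2$ and $\nu$ chosen strictly below the intrinsic De Giorgi constant $\mu_1$ of Remark \ref{DGc}, produces $\bar y\in K_1$, $\bar t\in(-1,-\bar\nu/4]$ and $\epsilon\in(0,1)$ with $\bar y+K_\epsilon\subset K_1$ and
\[
|[\hat v_{\bar t}>\theta_0/2]\cap(\bar y+K_\epsilon)|>(1-\nu)|K_\epsilon|.
\]
This is only a single-time estimate, so I would next propagate it backward to an intrinsic time cylinder of length $\tau\sim (\theta_0/2)^{2-\bar p}\epsilon^{\bar p}$ through an energy-based shrinking argument using Caccioppoli \eqref{energy-estimates} for $(\hat v-\theta_0/2)_-$: by choosing $\nu$ small enough the measure of $[\hat v\le \theta_0/4]$ on the cylinder is kept below $\mu_1$ times its volume. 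Remark \ref{DGc} then yields $\hat v_{\bar t}\ge \theta_0/8$ on an intrinsic sub-cube centered at $\bar y$ at time $\bar t$, and undoing both $\T_{\eta,\omega}$ and $\T_{r,r^{-N}}$ together with the translation by $\bar z$ produces the target inequality for $u$ with suitable $\rho,\varepsilon$ explicitly built out of $r,\omega,\eta,\epsilon$ and the data.

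\textbf{Main difficulty.}
The technical heart of the argument is the single-time-to-cylinder upgrade: while Lemma \ref{clusteringpotente} only delivers spatial information at a single time, Remark \ref{DGc} requires a cylinder measure estimate. Bridging this gap calls for a careful anisotropic energy-based shrinking argument, and relies crucially on the freedom (recalled at the end of Lemma \ref{clusteringpotente}) that $\epsilon$ can be taken arbitrarily small, so that the propagation time $\tau$ can be matched both to the intrinsic scaling of the level $\theta_0/2$ and to the constraint that the resulting intrinsic cylinder remains inside $Q_1^-$.
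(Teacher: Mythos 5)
Your proposal tracks the paper's proof essentially step by step: a Krylov--Safonov reduction in the quasi-metric \eqref{metric}, a De Giorgi argument to upgrade the pointwise value at $(0,0)$ into a measure estimate, the clustering Lemma \ref{clusteringpotente} to produce a single time slice where the positivity set is almost full, a Caccioppoli-based time propagation and dyadic/pigeonhole argument, and a final application of the intrinsic De Giorgi lemma (Remark \ref{DGc}) followed by unwinding the transformations. The only differences are cosmetic: you normalize so that $0\le v\le \omega$ and insert an extra intrinsic rescaling $\T_{\eta,\omega}$ to reduce to $\sup\le 1$, whereas the paper sets $C=1/\omega$ and applies Lemma \ref{Krylov-Safonov} to $u/C$, producing $\sup v\le 1$ directly; and your ``propagate it backward'' should read ``forward'' (or ``onto a backward cylinder $\Q^-$ sitting above $\bar t$''), since the Caccioppoli inequality for $(\hat v-\theta_0/2)_-$ controls the solution at later times from the single-time datum, exactly as the paper treats the clustering slice as the initial time of $K_2\times(0,\tau^2]$.
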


\begin{proof}
Let $C=1/\omega$, where $\omega=\omega(N, {\bf p})$ is given in  Lemma \ref{Krylov-Safonov}  with $\beta=N$, (using the quasi-metric in \eqref{metric}). We apply the lemma  to $u/C$ and extend $u$ as $0$ in the upper half-space. Then, \eqref{stime}  implies the existence of a point $ z_{1}\in Q_{1}^{-}$ and $r\in (0, 1)$ such that
\[
z_{1}+\Q_{r}^{-}(r^{-N})\subseteq \Q_{1}^{-},\qquad r^{N}\sup_{ z_{1}+\Q_{r}^{-}(r^{-N})}u\le 1 ,\qquad r^{N}\, u( z_{1})\ge C^{2}.
\]
The solution $v=\T_{r, r^{-N}}u(\cdot+z_{1})$ in $Q_{1}^{-}$, (with $\T$ given in \eqref{transformation}) obeys
\begin{equation}
\label{temp}
\sup_{Q_{1}^{-}}v\le 1,\qquad  v(0)\ge C^{2}.
\end{equation}
 We prove that \eqref{condizionepotente} holds for $\bar \nu=1-\mu_{a}$ given in Lemma \ref{DG} when $a=C^{2}/3$ (thus $\bar \nu$ depends only on $N$ and ${\bf p}$).   Indeed,  if,  by contradiction, we have
\[
 |[v \ge a]\cap Q_{1}^-| \le \mu_{a}\, |Q_{1}^-|,
\]
then since $0\le v\le 1$ in $Q_{1}^-$, Lemma \ref{DG} gives 
\[
v(0)\le \sup_{Q_{1/2}^{-}}v \le \frac{3}{2}\, a= \frac{C^{2}}{2},
\]
contradicting the last condition in \eqref{temp}. Therefore the thesis of Lemma \ref{clusteringpotente} holds true for any $\nu, \lambda$ to be chosen and for the corresponding point $z_{2}=(\bar x, \bar t)\in Q_{1}^{-}$, the following measure estimate holds  at the time $\bar{t}$
\[
  |[v_{\bar{t}} \le \lambda\, a ] \cap (\bar x+ K_{\e})| \le \nu\,  |K_{\e}|,\qquad  \bar t \in (-1,-{\bar{\nu}}/{4}].
\]
Recall that this measure estimate is valid for any $\nu,\lambda>0$ to be chosen, which in turn determine an arbitrarily small $\e$, so we can also suppose
\[
 \nu< \bar{\nu},\qquad \e^{-2}\bar{\nu}>1, \qquad 
\e^{-1}a> 2,
\]
where $a=C^{2}/3$.
We choose $\lambda=1/2$ and scale again considering $w=\T_{\e/2, \e/2}v(\cdot+z_{2})$. Since $v$ solves \eqref{mod} in $Q_1^-$ and by \eqref{tr} it holds $K_{\e}=\K_{\e}(\e)=T_{\e/2, \e/2}(K_{2})$,  $w$ solves \eqref{mod} in  $K_2 \times (0,\e^{-2}\,  \bar{\nu}]$ and it satisfies
\begin{equation}
\label{trop}
|[w(\cdot,0) \le 2]\cap K_{2}|\le |[w(\cdot,0) \le \e^{-1}  a] \cap K_{2}|\le \nu.
%\qquad -t_{\eps}:= 4\eps^{-2} \bar{t} < \,-\epsilon^{-2} \mu_a.
\end{equation}
We propagate forward in time the information in \eqref{trop} as follows. Fix a time $0<\tau<\nu<1$,
so that we can write down the energy inequality for $(w-2)_{-}$ in the subcylinder $K_2 \times (0,\tau^{2}]$ with $0 \leq \eta \leq 1$ independent of time and such that $\eta=1$ in $K_{1}$, $\eta=0$ outside of $K_{2}$ and $|\partial_{i}\eta|\le C$, to get 
\[
\int_{K_{1}}(w_{t}-2)_{-}^{2}\, dx\le \int_{K_{2}} (w_{0}-2)^{2}_{-}\, dx+C\sum_{i=1}^{N}\int_{0}^{t}\int_{K_{2}}(w_{s}-2)_{-}^{p_{i}}\, dx\, ds,
\] 
for all $t\in (0,\tau^{2}]$.
The second term on the right is bounded by $C\, 2^{N+p_{\rm max}}\, \nu $, while the first one is smaller than $4\, \nu$ due  \eqref{trop}. The term on the left bounds $|[w_{t} \le 1]\cap K_{1}|$, hence we get
\[
|[w_{t} \le 1]\cap K_{1}|\le C\, \nu\, \qquad \forall t\in (0,\tau^{2}],
\]
which implies by integration
\[
|[w\le 1]\cap Q |\le C\, \nu\,  |Q|,\qquad Q:=K_{1}\times (0,\tau^{2}].
\] \noindent 
Let $\tau=2^{-n}$ for some $n\in \N$ to be determined. We partition $K_{1}$ in $2^{Nn}$ dyadic cubes $x_{i}+K_{2^{-n}}=x_{i}+K_{\tau}$ and consider the corresponding  cylinders $\Q_{i}=(x_{i}+K_{\tau})\times(0,\tau^{2}]$. Notice that for any such $\tau$, the latters are intrinsically scaled, since $\Q_{i}=(x_{i}, \nu)+ \Q_{\tau}^{-}(\tau)$. On at least one of these cylinders it must hold
\[
 |[w\le 1]\cap \Q_{i} |\le C\, \nu \, |\Q_{i}|, 
 \]
implying
\[
 |[w\le \tau]\cap \Q_{i} |\le C\, \nu\, |\Q^{-}_{\tau}(\tau)|.
 \] 
 We thus apply Lemma \ref{DG} (see Remark \ref{DGc}), choosing $\nu$ such that $C\, \nu\le \mu_{1}$, (determining $\e$, $\tau$ and $n$ in the process, depending only on $N$ and ${\bf p}$). This implies 
 \[
 w\ge \tau/2\qquad \text{in }\quad (x_{i}, \tau^{2})+T_{\tau,\tau} Q_{1/2}^{-}=(x_{i}, \tau^{2})+\Q_{\tau/2}^{-}(\tau/2)
 \]
 and in particular 
 \[
 w\ge \tau/2\qquad  \text{in}\quad   z_{3}+ \K_{\tau/2}(\tau/2)
 \]
for some $z_{3}$. Scaling back to  $u =\T_{\e r/2,\e r^{-N}/2}^{-1} w$ we get for some $z_{0}\in \Q_{1}^{-}$ the estimate
 \[
 u\ge \tau\, \e\, r^{-N}/4\qquad \text{in} \quad z_{0}+\K_{\tau \e r/4}(\tau\, \e \, r^{-N}/4),
 \]
To conclude the proof of \eqref{infestimate}, it suffices to set
\[
\rho=\frac{\tau\, \e\, r}{4}, \quad \quad  \eps\,  \rho^{-N}=\frac{ \tau\, \e\, r^{-N}}{4},
\]
so that  $\eps= (\tau\, \e/4)^{N+1}$ depends only on $N$ and ${\bf p}$.

 \end{proof} \vskip0.2cm \noindent 
We can now prove the Harnack inequality \eqref{Harnack}.\\

\noindent 
{\em Proof of Theorem \ref{Harnack-Inequality}}.
We begin setting $C_{1}=C$, where the latter is given in Lemma \ref{proposizione-potente}. To define $C_{2}$ and $C_3$, we begin by considering the inequality
\begin{equation}
\label{h1}
u(0, 0)\le C_{3} \inf_{K_{\rho}(M)}u(\cdot, \, M^{2-\bar p}\, (C_{2}\,  \rho)^{\bar p}), \qquad M=u(0, 0)/C_{1}.
\end{equation}
We claim that there exist $\bar D>0$ and functions $\bar A(\cdot)>0$, $\bar B(\cdot)>0$ all depending only on $N$ and ${\bf p}$ such that, whenever
\beq
\label{ABD}
D\ge \bar D, \qquad A\ge \bar A(D), \qquad B\ge \bar B(D), 
\eeq
then it holds
\begin{equation}
\label{k}
\inf_{\K_{r}(M)} u(\cdot, D\, M^{2-\bar p}\,  r^{\bar p})\ge  u(0, 0)/B \quad \text{if}\quad  \K_{A r}(M)\times [-M^{2-\bar p}\, (A\, r)^{\bar p }, M^{2-\bar p}\, (A\, r)^{\bar p}]\subseteq \Omega_{T}.
\end{equation}
Taking $C_{2}\ge \bar D$ and, accordingly,  $C_3\ge \max\{\bar A(C_2), \bar B(C_2)\}$ will then give \eqref{h1} as long as 
\[
\K_{C_3 r}(M)\times [-M^{2-\bar p}\, (C_3\, r)^{\bar p }, M^{2-\bar p}\, (C_3\, r)^{\bar p}]\subseteq \Omega_{T}.
\] 
\medskip
\begin{figure}[htb]
\centering
\begin{tikzpicture}[scale=0.8]
\fill[fill=white!97!black] (-4.6, 6) .. controls (-2.3, 0.4) and (-1.5, -1.3) .. (-1, -1.3) .. controls (-0.5, -1.3) and (0.3, 0.4) .. (2.6, 6);
\draw (-4.6, 6) .. controls (-2.3, 0.4) and (-1.5, -1.3) .. (-1, -1.3) .. controls (-0.5, -1.3) and (0.3, 0.4) .. (2.6, 6);
\fill[fill=white!88!black] (-1, 0) .. controls (-0.5, 0) and (0, 1) .. (2.1, 6) -- (-4.1, 6) .. controls (-2, 1) and (-1.5, 0) .. (-1, 0);
\draw[very thin] (-4.1, 6) .. controls (-2, 1) and (-1.5, 0) .. (-1, 0) .. controls (-0.5, 0) and (0, 1) .. (2.1, 6);

\draw[->] (0, -2.5)--(0, 6.8) node[right]{$t$};
\draw (-5, 0)--(3.5, 0);

\filldraw (-1, 0) circle (1pt);
\draw[very thin, dashed] (-1, 0) node[above]{$\bar x$} -- (-1, -1.3) ;
\draw (-1, 3.5) node{$\displaystyle{\bigcup_{t>0} P_t}$};
\filldraw (0, 0) circle (1pt) node[above right]{$v>C$};

\draw (0, -0.3) node[right]{$\bar t$};
\draw (-0.05, -0.3)-- (0.05, -0.3);

\filldraw (-1, -1.3) circle (1pt);
\draw (0, -1.3) node[right]{$\bar s$};
\draw (-0.05, -1.3)--(0.05, -1.3);

\draw[thick] (-1.85, -0.3)--  node[midway, below]{${\mathcal K}$} (-0.15, -0.3);
\draw[thick] (-2, 6) node[below]{$K_{1}$} --(2, 6) ;

\draw[<->]  (3.8, 6)--(4, 6)-- node[midway, right]{$D$} (4, 0)--(3.8, 0);

\end{tikzpicture}
\caption{Scheme of proof of \eqref{k}. The light-gray part is the support of the Barenblatt starting at $(\bar x, \bar s)$, while ${\cal K}$ is ${\cal K}_{\rho}(\eps\, \rho^{-N})$. }
\label{}

\end{figure}

In order for \eqref{k} to make sense we start by prescribing $\bar A(D)^{\bar p}\ge \max\{D, 1\}$. By assumption, the function $v=\T_{r, M}u$ solves the equation in $\Q_A:=K_{A}\times [-A^{\bar p}, A^{\bar p}]$ and $v(0,0)=C$.  Then \eqref{infestimate} holds, namely there exists $(\bar x, \bar t)\in Q_{1}^{-}$, $\rho\in (0, 1)$ and $\eps=\eps(N, {\bf p})$ such that
\[
\inf_{\bar x+\K_{\rho}(\varepsilon\, \rho^{-N})} v_{\bar t}\ge \varepsilon\,  \rho^{-N}\qquad \text{for}\quad  ( \bar{x}, \bar{t})+\K_{\rho}(\varepsilon\, \rho^{-N})\subseteq K_{1}.
\] 
We choose $\lambda>0$, $-2<s<0$ so that the Barenblatt solution centered at $(\bar x,  s)$ defined as 
\[
b_{\lambda, s}(x, t)=\B_{\lambda}( x-\bar{x}, t-s)
\]
is below $v$ in $K_{A}$, which is implied by 
\[
\begin{cases}
{\rm supp}\, b_{\lambda, s}(\cdot,  \bar{t})\subseteq \bar x+ \K_{\rho}(\varepsilon\, \rho^{-N}),\\[3pt]
\|b_{\lambda, s}(\cdot,  \bar{t})\|_{\infty}\le \varepsilon\, \rho^{-N}.
\end{cases}
\]
By Corollary \ref{corB}, this amounts to 
\[
\begin{cases}
\lambda^{(p_{i}-2)/p_{i}}\, ( \bar{t}-s)^{\alpha_{i}}\le (\varepsilon\, \rho^{-N})^{(p_{i}-\bar p)/p_{i}}\, \rho^{\bar p/ p_{i}}=\varepsilon^{(p_{i}-\bar p)/p_{i}}\, \rho^{\sigma\alpha_{i}}, \\[3pt]
 \lambda\,  ( \bar{t}-s)^{-\alpha}\le \varepsilon\, \rho^{-N},
\end{cases}
\]
which holds true for  $s=\bar s$ obeying $\bar s=\bar{t}-\rho^{\sigma}$ with $\rho<1$ and $\bar \lambda=\lambda(N,{\bf p})$ sufficiently small. Since $\bar{s}>-2$, by Corollary \ref{corB} it holds
\[
b_{\bar \lambda, \bar s}(x, t)\ge\bar \lambda\, \bar \eta\, (t-\bar s)^{-\alpha}\ge\bar \lambda\, \bar \eta \, (t+2)^{-\alpha}  
\]
for all 
\[
 t>0, \qquad x\in \prod_{i=1}^{N}\{ |\bar x_{i}-x_i|< \bar\eta\, \bar \lambda^{(p_{i}-2)/p_{i}}\, (t-\bar s)^{\alpha_i}\}\supseteq P_{t}(\bar x):= \prod_{i=1}^{N}\{ |\bar x_{i}-x_i|< \bar\eta\, \bar \lambda^{(p_{i}-2)/p_{i}}\, t^{\alpha_i}\}.
 \]
We then choose $\bar{\tau}>0$ sufficiently  large so  that $P_{\bar{\tau}}(\bar x)\supseteq K_{1}$ and set $\bar D=\bar \tau$ (this is possible by \eqref{param}, which ensures  $\alpha_i>0$ for each $i=1,\dots ,N$). Then, for any $D\ge \bar D$ we additionally prescribe 
\beq
\label{condbara}
\bar A(D)^{\bar p}\ge D+2 \quad \text{ and}\quad  \bigcup_{\bar x\in K_{1}}{\rm supp}\, {\cal B}_{\bar \lambda}(\cdot-\bar x, D+2)\subseteq K_{\bar A(D)}.
\eeq
 Notice that this choice can be made depending only on the parameters $N, {\bf p}$ and $D$ and that if the latter conditions holds for $\bar A$ then they hold for any $A\ge \bar A$. 
The prescribed conditions on $A$ permits the use of the comparison principle between $v$ and $b_{\bar\lambda, \bar s}$ in $ K_{A}\times [\bar t, D]$ (since on the lateral part of its boundary $b_{\bar \lambda, \bar s}$ vanishes), which then yields
\[
v(\cdot, D)\ge b_{\bar \lambda, \bar s}(\cdot, D)\ge \bar \lambda\, \bar \eta\, (D+2)^{-\alpha}\qquad \text{in $K_{1}$}
\]
for any $D\ge \bar D$. Defining  $\bar B(D)= C(D+2)^{\alpha}/(\bar\eta \bar \lambda)$ and scaling back gives \eqref{k}. 

\medskip

We next deal with the other inequality in \eqref{Harnack},  sketching its proof as some arguments are identical to the previous one (see also \cite{CianiVespri} for a different approach). The constant $C_{1}$ is the same $C$ as before and we claim that the inequality
\begin{equation}
\label{k1}
\sup_{\K_{r}(M)} u(\cdot, -D\, M^{2-\bar p}\, r^{\bar p})\le  B\, u(0, 0)\quad \text{if}\quad  \K_{A r}(M)\times [-M^{2-\bar p}(A\, r)^{\bar p }, M^{2-\bar p}(A\, r)^{\bar p}]\subseteq \Omega_{T}
\end{equation}
(with $M=u(0, 0)/C$) holds true for any $A, B, D$ as in \eqref{ABD}, for a possibly different choice of $ \bar D$ and of the functions $\bar A, \bar B$.  

\begin{figure}[htb]
\centering
\begin{tikzpicture}[scale=0.8]
\fill[fill=white!97!black] (-4.6, 6) .. controls (-2.1, 0.4) and (-1.5, -1.3) .. (-1, -1.3) .. controls (-0.5, -1.3) and (0.1, 0.4) .. (2.6, 6);
\draw (-4.6, 6) .. controls (-2.1, 0.4) and (-1.5, -1.3) .. (-1, -1.3) .. controls (-0.5, -1.3) and (0.1, 0.4) .. (2.6, 6);
\fill[fill=white!88!black] (-1, 0) .. controls (-0.5, 0) and (0, 1) .. (2.1, 6) -- (-4.1, 6) .. controls (-2, 1) and (-1.5, 0) .. (-1, 0);
\draw[very thin] (-4.1, 6) .. controls (-2, 1) and (-1.5, 0) .. (-1, 0) .. controls (-0.5, 0) and (0, 1) .. (2.1, 6);

\draw[->] (2, -2)--(2, 7) node[above right]{$t$};
\draw (-5, 6)--(3, 6);

\filldraw (-1, 0) circle (1pt);
\draw[very thin, dashed] (-1, 6.05) node[above]{$\bar x$}--(-1, 4);
\draw[very thin, dashed] (-1, 3)--(-1, -1.3);
\filldraw (1, 1) circle (1pt) node[below]{$v>C$};
\draw[very thin, dashed] (1, 6) node[above]{$x_{0}$}--(1, 1);

\draw (2, -0.3) node[right]{$\bar t$};
\draw (1.95, -0.3)-- (2.05, -0.3);

\draw (-1, 3.5) node{$\displaystyle{\bigcup_{t>0} P_t}$};

\draw[thick] (-1.8, -0.3)--  node[midway, below]{${\mathcal K}$} (-0.2, -0.3);

\filldraw (-1, -1.3) circle (1pt) (2, 6) circle (1pt) node[above right]{$v=C\, D^{-\gamma}$};
\draw (2, -1.3) node[right]{$\bar s$};
\draw (1.95, -1.3)--(2.05, -1.3);

\draw[<->] (4.1, 6)--(4.3, 6)--node[midway, right]{$D^{1+\gamma(\bar p-2)}$}(4.3, 1)--(4.1, 1);
\end{tikzpicture}
\caption{Scheme of proof of \eqref{k1}. The light-gray part is the support of the Barenblatt starting at $(\bar x, \bar s)$ while ${\cal K}$ is ${\cal K}_{\rho}(\eps\, \rho^{-N})$.}
\label{}

\end{figure}

To prove \eqref{k1}, we fix  $\gamma>N/\bar p$ and start by prescribing 
\[
\bar A(D)^{\bar p}\ge D,\qquad \bar B(D)\ge D^{\gamma}.
\]
Next, consider $A, B, D$ fulfilling \eqref{ABD} together with $\K_{A r}(M)\times [-M^{2-\bar p}(A\, r)^{\bar p }, M^{2-\bar p}(A\, r)^{\bar p}]\subseteq \Omega_{T}$,
but  such that
\beq
\label{assf}
\sup_{\K_{r}(M)} u(\cdot, -D\,M^{2-\bar p}r^{\bar p})>  B\, u(0, 0). 
\eeq
We rewrite the latter in terms of  $v=\T_{r, M D^{\gamma}}u$, which is a solution in ${\cal Q}_{A, D^{-\gamma}}$: the resulting information is
\begin{equation}
\label{ff}
v(0, 0)=C\, D^{-\gamma},\qquad \sup_{\K_{1}(D^{-\gamma})}v(\cdot, -D^{1+\gamma(\bar p-2)})>B\, v(0, 0)\ge  C,
\end{equation}
where we used $B\ge \bar B(D)\ge D^{\gamma}$ in the last inequality. We fix a point $x_{0}\in \K_{1}(D^{-\gamma})$ such that  $v(x_{0}, -D^{1+\gamma(\bar p-2)})>C$
 and suppose that $\bar A(D)$ is additionally large enough so that $v$ is a solution in $(x_{0}, -D^{1+\gamma(\bar p-2)})+Q_{1}$.
We can then apply Lemma \ref{proposizione-potente} and, proceeding exactly as in the first part of the proof, we find
\[
\bar x\in x_{0}+K_{1},\qquad  -D^{1+\gamma(\bar p-2)}-2 \leq \bar s < \bar t \leq -D^{1+\gamma(\bar p-2)}
\]
and $\bar\lambda(N, {\bf p})>0$ such that the Barenblatt solution $b_{\bar\lambda, \bar s}$ centered at $(\bar x, \bar s)$ is below $v$ at the time $\bar t$. As before, for some  $\bar \eta(N,\bf{p}) $ it holds 
\[
b_{\bar\lambda, \bar s}(\cdot, t)\ge  \bar \lambda\,  \bar \eta \, (t+D^{1+\gamma(\bar p-2)}+2)^{-\alpha}\quad \text{in}\quad P_{t+D^{1+\gamma(\bar p-2)}}(\bar x),\quad  \forall t>-D^{1+\gamma(\bar p-2)}.
\]
If needed, we further increase $\bar A(D)$ so that $v$ solves the equation in a rectangle containing the support of any possible $b_{\bar\lambda, \bar s}$ so constructed, up to the time $t= 0$ (through a condition of the type \eqref{condbara}).

\medskip
So far, the definition of the functions $\bar A(D)$ and $\bar B(D)$ is concluded, and we now look for all the values of $D$ such that $0\in P_{\bar D^{1+\gamma(\bar p-2)}}(\bar x)$. Since $x_{0}\in \K_{1}(D^{-\gamma})$ and $\bar x\in x_{0}+K_{1}$, this is true if 
\begin{equation} \label{Dcondition1}
1+D^{-\gamma(p_{i}-\bar p)/p_{i}}\le \bar \eta \, \bar \lambda^{(p_{i}-2)/p_{i}}\, D^{(1+\gamma(\bar p-2))\alpha_{i}},\qquad \forall i=1, \dots, N. 
\end{equation}
We claim that the exponent of $D$ on the left is less than the one on the right. Indeed, from the definition of $\alpha_{i}$, the claim reduces through elementary algebraic manipulations to
\[
\gamma\,  \bar{p}\,  (2-p_i) <N\, (\bar{p}-p_i)+\bar{p},
\] 
which is always true since the left hand side is negative by $p_{i}>2$ and the right hand side is positive by  \eqref{param}.  It follows that \eqref{Dcondition1} holds true for any $D\ge \bar D_{1}$, and in this case we get  by comparison
\beq
\label{aqp}
v(0, 0)\ge b_{\bar\lambda, \bar s}(0, 0)\ge \bar \lambda\,  \bar \eta \, (D^{1+\gamma(\bar p-2)}+2)^{-\alpha}.
\eeq
Next, we claim that there exists $\bar D_{2}$ such that if $D\ge \bar D_{2}$, then
\begin{equation} \label{Dcondition2}
\bar \lambda\,  \bar \eta \, (D^{1+\gamma(\bar p-2)}+2)^{-\alpha}>C\, D^{-\gamma}.
\end{equation}
Indeed, it suffices to show that the exponent on the left is greater than the one on the right, which, recalling that $\alpha=N/(N(\bar p-2)+\bar p)$, amounts to
\[
\gamma-\alpha\, (1+\gamma\, (\bar p-2))=\frac{\gamma\, \bar p-N}{N\, (\bar p-2)+\bar p}>0\quad \Leftrightarrow\quad  \gamma>\frac{N}{\bar p}
\]
as we assumed. Thus \eqref{Dcondition2} is proved, which in turn contradicts the first condition in \eqref{ff} via the lower bound in \eqref{aqp}. All in all, letting $\bar D=\max\{\bar D_{1}, \bar D_{2}\}$ shows that if $A, B, D$ obey \eqref{ABD}, then \eqref{assf} cannot hold, completing the proof of \eqref{k1}. We conclude  choosing the constants $C_2$ and $C_3$ as in the previous step, and finally pick the largest between the so defined constants and previous ones.

\begin{flushright}$\square$\end{flushright}

Finally, we prove the Liouville theorem stated in the Introduction.\\

\noindent
{\em Proof of Corollary \ref{liouville}}. We suppose that $\sup_{\R^N} u > \inf_{\R^N} u$ and let  $\eps \in (0,\sup_{\R^N} u - \inf_{\R^N} u)$. Consider the non-negative solution $v_{\eps}=u-\inf_{\R^{N}} u+\eps/2$ to \eqref{mod2}. By continuity, we can pick a point $x_{\eps}$ such that $v_{\eps}(x_{\eps})=\eps$.  Up to translations, the Harnack inequality \eqref{H} implies that $v_{\eps}\le C_{2}\, \eps$ in $x_{\eps}+K_{\rho}(\eps/C_{1})$, for all $\rho>0$. Letting $\rho\to +\infty$, we get $v_{\eps}\le C_{2}\, \eps$ in the whole $\R^{N}$, i.e.
\[
u\le \inf_{\R^{N}} u+(C_{2}-1/2)\, \eps
\]
in $\R^{N}$ and letting $\eps\to 0$ we get the claim.
\begin{flushright}$\square$\end{flushright}

\end{document}